\numberwithin{equation}{section}
\newtheorem{Theorem}{Theorem}[section]
\newtheorem{Corollary}[Theorem]{Corollary}
\newtheorem{Lemma}[Theorem]{Lemma}
\newtheorem{Proposition}[Theorem]{Proposition}
{ \theoremstyle{definition}
\newtheorem{Remark}[Theorem]{Remark} }
\begin{document}

\allowdisplaybreaks

\newcommand{\arXivNumber}{1801.10554}

\renewcommand{\thefootnote}{}

\renewcommand{\PaperNumber}{126}

\FirstPageHeading

\ShortArticleName{Structure Relations of Classical Orthogonal Polynomials}

\ArticleName{Structure Relations of Classical\\ Orthogonal Polynomials in the Quadratic\\ and $\boldsymbol{q}$-Quadratic Variable\footnote{This paper is a~contribution to the Special Issue on Orthogonal Polynomials, Special Functions and Applications (OPSFA14). The full collection is available at \href{https://www.emis.de/journals/SIGMA/OPSFA2017.html}{https://www.emis.de/journals/SIGMA/OPSFA2017.html}}}

\Author{Maurice KENFACK NANGHO~$^{\dag\ddag}$ and Kerstin JORDAAN~$^\S$}

\AuthorNameForHeading{M.~Kenfack Nangho and K.~Jordaan}

\Address{$^\dag$~Department of Mathematics and Applied Mathematics, University of Pretoria,\\
\hphantom{$^\dag$}~Private bag X20 Hatfield, 0028 Pretoria, South Africa}
\Address{$\ddag$~Department of Mathematics and Computer Science, University of Dschang, Cameroon}
\EmailD{\href{mailto:kenfnang@gmail.com}{kenfnang@gmail.com}}

\Address{$^\S$~Department of Decision Sciences, University of South Africa,\\
\hphantom{$^\S$}~PO Box 392, Pretoria, 0003, South Africa}
\EmailD{\href{mailto:jordakh@unisa.ac.za}{jordakh@unisa.ac.za}}

\ArticleDates{Received January 31, 2018, in final form November 13, 2018; Published online November 27, 2018}

\Abstract{We prove an equivalence between the existence of the first structure relation satisfied by a sequence of monic orthogonal polynomials $\{P_n\}_{n=0}^{\infty}$, the orthogonality of the second derivatives $\big\{\mathbb{D}_{x}^2P_n\big\}_{n= 2}^{\infty}$ and a generalized Sturm--Liouville type equation. Our treatment of the ge\-ne\-ralized Bochner theorem leads to explicit solutions of the difference equation [Vinet L., Zhedanov A., \textit{J.~Comput. Appl. Math.} \textbf{211} (2008), 45--56], which proves that the only monic orthogonal polynomials that satisfy the first structure relation are Wilson polynomials, continuous dual Hahn polynomials, Askey--Wilson polynomials and their special or limiting cases as one or more parameters tend to~$\infty$. This work extends our previous result [arXiv:1711.03349] concerning a conjecture due to Ismail. We also derive a second structure relation for polynomials satisfying the first structure relation.}

\Keywords{classical orthogonal polynomials; classical $q$-orthogonal polynomials; Askey--Wilson polynomials; Wilson polynomials; structure relations; characterization theorems}

\Classification{ 33D45; 33C45; 42C05}

\renewcommand{\thefootnote}{\arabic{footnote}}
\setcounter{footnote}{0}

\vspace{-2mm}

\section{Introduction}

A sequence of polynomials $\{P_n(x)\}_{n=0}^{\infty}$, $\deg(P_n(x))=n$, is orthogonal with respect to a positive measure $\mu$ on the real numbers~$\mathbb{R}$, if
\begin{gather*}\int_{S}P_m(x)P_n(x){\rm d}\mu(x)=d_n\delta_{m,n},\qquad m,n\in \mathbb{N},\end{gather*}
where $S$ is the support of $\mu$, $d_n>0$ and $\delta_{m,n}$ the Kronecker delta. A sequence $\{P_n(x)\}$ of monic polynomials orthogonal with respect to a positive measure satisfies a three-term recurrence relation
\begin{gather}\label{e63}
P_{n+1}(x)=(x-a_n)P_{n}(x)-b_nP_{n-1}(x), \qquad n=0,1,2,\ldots,\end{gather}
with initial conditions $P_{-1}(x)= 0$, $P_{0}(x)= 1$, and recurrence coefficients
\begin{gather*} a_n\in\mathbb{R},\qquad n=0,1,2,\dots, \qquad b_{n}>0,\qquad n=1,2,\dots.\end{gather*}

A sequence of monic orthogonal polynomials is classical if the sequence $\{P_n(x)\}$ as well as $D^mP_{n+m}(x)$, $m\in \mathbb{N}$, where $D$ is the usual derivative or one of its extensions (difference operator, $q$-difference operator or divided-difference operator), satisfies a three-term recurrence of the form~(\ref{e63}).

The classical orthogonal polynomials of Jacobi, Laguerre and Hermite are known to be the only polynomials satisfying
\begin{enumerate}\itemsep=0pt
\item[1)] the {\textit{first structure relation}} (cf.~\cite{Al-Chihara1972, Maroni99})
\begin{gather*}
\pi(x)DP_n(x)=\sum_{j=-1}^{1}a_{n,n+j}P_{n+j}(x),\qquad n=1,2,\dots,\qquad a_{n,n-1}\neq 0,
\end{gather*} where $\pi(x)$ is a polynomial of degree at most $2$;
\item[2)] the \textit{second structure relation} (cf.~\cite{Maroni93, Maroni99})
\begin{gather}\label{e57b}
P_n(x)=\sum_{j=-1}^{1}b_{n,n+j}DP_{n+j}(x),\qquad n=0,1,\dots,\qquad b_{n,n+1}=\frac{1}{(n+1)}\neq 0;
\end{gather}
\item[3)] the orthogonality of the sequence of derivatives $\{DP_{n+1}\}_{n=0}^{\infty}$ with respect to $\pi(x) w(x)$ (cf.~\cite{Al-Salam}), where $\pi(x)$ is a polynomial of degree at most~$2$, and $w(x)$ denotes the weight function corresponding to $\{P_n\}_{n=0}^{\infty}$;
\item[4)] a Sturm--Liouville differential equation of the form (cf.~\cite{bochner1929})
\begin{gather*} 
\phi(x)D^2P_n(x)+\psi(x)DP_n(x)+\lambda_n P_n=0,\end{gather*}
where $\phi(x)$, $\psi(x)$ are polynomials with $\deg(\phi(x))\leq 2$, $\deg \psi(x)=1$ and $\lambda_n$ is constant. This result is known as {\it Bochner's theorem} (cf.~\cite{bochner1929}).
\end{enumerate}

The first structure relation, second structure relation and Bochner's theorem have been gene\-ralized to orthogonal polynomials involving the difference and $q$-difference operator (cf.~\cite{AN2006,datta, GaMar1995,Koepf1998,Koepf2001}) and play an important role when studying properties of zeros or connection and linearization problems involving polynomials (see, for example, \cite{Alta, Koepf1998}).

Askey--Wilson polynomials \cite[equation~(1.15)]{Askey-1985}
\begin{gather}\label{e58}
\frac{a^np_n(x;a,b,c,d | q)}{(ab,ac,ad;q)_n}= {}_4 \phi_3 \left(\begin{matrix} q^{-n},abcdq^{n-1},a{\rm e}^{-{\rm i}\theta},a{\rm e}^{{\rm i}\theta}\\ ab,ac, ad\end{matrix}; q,q\right),\qquad x=\cos{\theta},
\end{gather}
and Wilson polynomials \cite[equation~(9.1.1)]{KSL}
\begin{gather}\label{e59}
\frac{W_n\big(x^2;a,b,c,d\big)}{ (a+b)_n(a+c)_n(a+d)_n}= {}_4 F_3 \left(\begin{matrix} -n, n+a+b+c+d-1,a-{\rm i}x,a+{\rm i}x\\ a+b,a+c,a+d\end{matrix}; 1\right)
\end{gather}
do not satisfy structure relations of the type mentioned above but they do satisfy the shift relations (cf.~\cite[equations~(14.1.9) and (9.1.8)]{KSL})
\begin{gather*}
\mathcal{D}_qp_n(x;a,b,c,d\,|\,q)=\frac{2q^\frac{1-n}{ 2}\big(1-q^n\big)\big(1-abcdq^{n-1}\big)}{ 1-q}p_{n-1}\big(x;aq^\frac{1}{ 2},bq^\frac{1}{ 2}, cq^\frac{1}{ 2}, dq^\frac{1}{ 2} \,|\, q\big), \\
\frac{\delta W_n(x^2;a,b,c,d)}{ \delta x^2}=-n(n+a+b+c+d-1)W_{n-1}\left(x^2;a+\frac{1}{ 2},b+\frac{1}{ 2},c+\frac{1}{ 2},d+\frac{1}{ 2}\right),
\end{gather*}
where $\mathcal{D}_q$ is the Askey--Wilson operator (cf.~\cite[p.~33]{Askey-1985}, \cite[equation~(1.16.4)]{KSL}, \cite[equa\-tion (12.1.9)]{Mourad-2005})
\begin{gather}\label{e65}\mathcal{D}_qf(x)=\frac{f\big(q^\frac{1}{ 2}{\rm e}^{{\rm i}\theta}\big)-f\big(q^{-\frac{1}{ 2}}{\rm e}^{{{\rm i}\theta}}\big)}{ \big({\rm e}^{{\rm i}\theta}-{\rm e}^{-{\rm i}\theta}\big)\big(q^\frac{1}{ 2}-q^{-\frac{1}{ 2}}\big)/2},\qquad x=\cos\theta,\end{gather}
and $\delta$ is the Wilson operator
\begin{gather}\label{e65b}
\delta f(x^2)=f\left(\left(x+\frac{{\rm i}}{ 2}\right)^2\right)-f\left(\left(x-\frac{{\rm i}}{2}\right)^2\right).
\end{gather}
Here \begin{gather*}_{s+1} \phi_s \left(
\begin{matrix} a_1,\dots,a_{s+1}\\ b_1,\dots,b_s \end{matrix}
; q,z \right)
=\sum_{k=0}^{\infty}\frac{(a_1;q)_k\cdots(a_{s+1};q)_k}{(b_1;q)_k\cdots(b_s;q)_k}\frac{z^k}{(q;q)_k},\end{gather*} with
\begin{gather*}
 (a;q)_0=1, \qquad (a;q)_k= \prod_{j=0}^{k-1}\big(1-aq^j\big),\qquad k=1,2,\dots,\end{gather*} and
\begin{gather*}_{s+1} F_s \left(
\begin{matrix} a_1,\dots,a_{s+1}\\ b_1,\dots,b_s \end{matrix}
; z \right)=\sum_{k=0}^{\infty}\frac{(a_1)_k\cdots (a_{s+1})_k}{(b_1)_k\cdots (b_{s})}\frac{z^k}{k!},\end{gather*}
with $(a)_0=1$ and $(a)_k= \prod\limits_{j=0}^{k-1}(a+j)$, $k=1,2,\dots$.

Since the appearance of Askey--Wilson and Wilson polynomials in the early 1980's (cf.~\cite{Andrew-Askey, Askey-1985}), many authors have studied these polynomials (see, for example, \cite{ARS1995, CSM, Mourad2003, Mourad-2005, Koorn,Magnus1988, VinetZhedanov}). Ismail considered the problem of the first structure relation for Askey--Wilson polynomials in the conjecture \cite[Conjecture~24.7.9]{Mourad-2005}. In \cite[Corollary~3.3]{MK2017}, we completed the conjecture by proving that a sequence of monic orthogonal polynomials satisfies the first structure relation
\begin{gather}\label{structureAW}
\pi(x)\mathcal{D}_q^2P_n(x)=\sum_{j=-2}^{2}a_{n,n+j}P_{n+j}(x),\qquad a_{n,n-2}\neq 0, \qquad x=\cos\theta,
\end{gather}
where $\pi$ is a polynomial of degree at most 4, if and only if $P_n(x)$ is an Askey--Wilson polynomial up to a multiplicative constant or a subcase of Askey--Wilson polynomials, including limiting cases as one or more of the parameters tend to $\infty$ (cf.~\cite{Mourad2003}). This result holds for orthogonal polynomials of the variable $x=\frac{{\rm e}^{-{\rm i}\theta}+{\rm e}^{{\rm i}\theta}}{ 2}= \cos\theta$ which can also be written as $x(s)=\frac{q^{-s}+q^s}{2}$, ${\rm e}^{{\rm i}\theta}=q^s$.

Even though Askey--Wilson polynomials (\ref{e58}) are a basic hypergeometric analog of the Wilson polynomials~(\ref{e59}) (cf.~\cite[p.~188]{ARS1995}), the coefficients in the analog of (\ref{structureAW}) for the Wilson operator
\begin{gather}\label{structureW}
\pi(x)\frac{\delta^2 P_n(x)}{ \delta^2x^2}=\sum_{j=-2}^{2}a_{n,n+j}P_{n+j}(x),\qquad a_{n,n-2}\neq0,
\end{gather}
as well as its solutions can not easily be deduced from those of Askey--Wilson polynomials. It therefore is necessary to consider the Ismail conjecture for the Wilson variable $x(z)=z^2$ $(z={\rm i}s$, ${\rm i}^2=-1)$, or, more generally, for the quadratic and $q$-quadratic variable (cf.~\cite{niki1991})
\begin{gather}\label{e60}
x(s)= \begin{cases}
c_1 q^{-s}+c_2 q^{s}+c_3&\text{if} \ q\neq 1,\\
c_4 s^2+c_5s+c_6& \text{if} \ q=1,
\end{cases}
\end{gather}
where $c_1\neq 0$ and $c_4\neq 0$. This problem of characterizing the orthogonal polynomials of the variable $x(s)$ whose derivatives satisfy a generalized first structural relation is a generalization of the Askey problem (cf.\ \cite[p.~69]{Al-Chihara1972}).

The aim of this paper is to use generalizations of Bochner's theorem in \cite{Mourad2003,VinetZhedanov} (see also~\cite{Grunbaum}) for classical orthogonal polynomials of the quadratic and $q$-quadratic variable $x(s)$ defined in~(\ref{e60}) to obtain a generalized first structure relation for classical orthogonal polynomials of the variable $x(s)$ of the form
\begin{gather*}
\pi(x)\mathbb{D}_{x}^2P_n(x)=\sum_{j=-r}^{t}a_{n,n+j}P_{n+j}(x),\qquad n=1,2,\dots,
\end{gather*} where $\mathbb{D}_{x}$ is the divided-difference operator (cf.~\cite{foupouagnigni2008})
\begin{gather}\label{ddop}
\mathbb{D}_{x} f(x(s))=\frac{f\big(x\big(s+\frac{1}{ 2}\big)\big)-f\big(x\big(s-\frac{1}{2}\big)\big)}{ x\big(s +\frac{1}{ 2}\big)-x\big(s-\frac{1}{ 2}\big)}.
\end{gather}

This work is organized as follows: In Section \ref{2}, we derive explicit solutions for the second-order difference equation \cite[equation~(1.3)]{VinetZhedanov}
\begin{gather} A(s)P_n(x(s+1))+B(s)P_n(x(s))+C(s)P_n(x(s-1))=\lambda_nP_n(x(s)),\label{vinet} \end{gather} where $A(s)$, $B(s)$, $C(s)$ are some functions of the discrete argument $s$, and $P_0=1$, shown to characterize polynomials of the variable $x(s)$ in \cite{VinetZhedanov} by Vinet and Zhedanov. In Section~\ref{3}, we will show that this generalized Bochner theorem (cf.~\cite{VinetZhedanov}) is related to the generalized Askey problem and we will characterize Wilson and Askey--Wilson polynomials, and subcases, including limiting cases, as the only monic orthogonal polynomials satisfying the first structure relation
\begin{gather*}
\pi(x(s))\mathbb{D}_{x}^2 P_n(x(s))=\sum_{j=-2}^{2}a_{n,n+j}P_{n+j}(x(s)),\qquad a_{n,n-2}\neq 0, \qquad n=2,3,\dots,
\end{gather*}
where $\pi(x)$ is a polynomial of degree at most four and $x(s)$ is given by (\ref{e60}). We then derive the second structure relation
\begin{gather}\label{SecondSR}
P_n(x(s))=\sum_{j=-2}^{2}b_{n,n+j}\mathbb{D}_{x}^2P_{n+j}(x(s)),
\end{gather}
for classical orthogonal polynomials of the variable $x(s)$ and conclude the section by connecting the second structure relation (\ref{SecondSR}) to that of Costas-Santos and Marcell\'{a}n \cite[p.~118]{CSM} \begin{gather}\label{macellan}
\mathcal{M}P_n(x(s))=e_n\mathbb{D}_{x} P_{n+1}(x(s))+f_n\mathbb{D}_{x} P_n(x(s))+g_n\mathbb{D}_{x} P_{n-1}(x(s)),\\
\mathcal{M} f(s)=\frac{f\big(s+\frac{1}{ 2}\big)+f\big(s-\frac{1}{ 2}\big)}{ 2}.\nonumber \end{gather}

In Section~\ref{4} we compute coefficients of (\ref{structureW}) for the Wilson polynomials as well as those of the second structure relation (\ref{SecondSR}) for the Wilson polynomials and Askey--Wilson polynomials.
\section{Preliminaries and notation}
Let us recall some basic results and notations. $x(s)$ given by (\ref{e60}) satisfies (cf.~\cite{ARS1995})
\begin{gather*}
x(s+n)-x(s)=\gamma_n {\nabla} x_{n+1}(s),\qquad
\frac{x(s+n)+x(s)}{ 2}=\alpha_n x_n(s)+\beta_n, \end{gather*} for $n=0,1,\dots,$ with
\begin{gather*} x_\mu(s)=x\left(s+\frac{\mu}{ 2}\right),\qquad \mu\in{\mathbb{C}}, \end{gather*} where
${\mathbb{C}}$ is the set of complex numbers and ${\nabla}$ is the backward difference operator ${\nabla} f(s):=f(s)-f(s-1)$.
The sequences $(\alpha_n)$, $(\beta_n)$, $(\gamma_n)$ are given explicitly by (cf.~\cite{ARS1995}), $\alpha_1=\alpha$, $\beta_1=\beta$,
\begin{gather*}
\alpha_n=1,\qquad \beta_n=\beta n^2,\qquad \gamma_n=n,\qquad \alpha=1,\qquad \beta=\frac{c_4}{ 4} \qquad \text{for}\quad q=1, \end{gather*}
and
\begin{gather*}
\alpha_n=\frac{q^\frac{n}{ 2}+q^{-\frac{n}{ 2}}}{ 2},\qquad \beta_n=\frac{\beta(1-\alpha_n)}{ 1-\alpha},\qquad \gamma_n=\frac{q^\frac{n}{ 2}-q^{-\frac{n}{ 2}}}{ q^\frac{1}{2}-q^{-\frac{1}{ 2}}},\\
\alpha=\frac{{q^\frac{1}{ 2}+q^{-\frac{1}{2}}}}{ 2},\qquad \beta=-c_3\frac{\big(\sqrt{q}-1\big)^2}{2\sqrt{q}},\qquad \text{for}\quad q\neq 1.
\end{gather*}
The following hold (cf.~\cite{foupouagnigni2008}):
\begin{gather}
\mathbb{D}_{x}(fg)=\mathbb{D}_{x}(f)\mathbb{S}_{x}(g)+\mathbb{S}_{x}(f)\mathbb{D}_{x}(g)\label{e16},\\
\mathbb{S}_{x}(fg)=\mathbb{S}_{x}(f)\mathbb{S}_{x}(g)+U_2\mathbb{D}_{x}(f)\mathbb{D}_{x}(g)\label{e16a},\\
\mathbb{D}_{x}\mathbb{S}_{x} =\alpha\mathbb{S}_{x} \mathbb{D}_{x}+U_1 \mathbb{D}_{x}^2\label{e21a},\\
\mathbb{S}_{x}^2= U_1\mathbb{S}_{x}\mathbb{D}_{x}+\alpha U_2\mathbb{D}_{x}^2+\mathbb{I},\label{e16b}
\end{gather}
where
\begin{gather*}
U_1(x(s))=\big(\alpha^2-1\big)x(s)+\beta (\alpha+1),\\
U_2(x(s))=\left(\frac{x\big(s+\frac{1}{ 2}\big)-x\big(s-\frac{1}{ 2}\big)}{ 2}\right)^2=\big(\alpha^2-1\big)x(s)^2+2\beta (\alpha+1)x(s)+C_x,
\end{gather*}
with
\begin{gather*}C_x=\frac{c_{5}^{2}}{4}-c_{4}c_{6}, \qquad \text{for} \quad q=1 \qquad \text{and} \qquad C_x={\frac {( q-1) ^{2} \big(c_3^{2}- 4 c_{1}c_{2} \big) }{4q}}, \qquad \text{for} \quad q\neq 1.
\end{gather*}
Note that $\mathbb{I}(f)=f$ and $\mathbb{S}_{x}$ the averaging operator
\begin{gather*}\mathbb{S}_{x} f(x(s))=\frac{f\big(x\big(s+\frac{1}{ 2}\big)\big)+f\big(x\big(s-\frac{1}{2}\big)\big)}{ 2},\end{gather*}
which is a generalization of \cite[equation~(12.1.21)]{Mourad-2005}. Taking ${\rm e}^{{\rm i}\theta}=q^s$, the Askey--Wilson operator~(\ref{e65}) reads as
\begin{gather*} \mathcal{D}_qf(x(s))=\frac{f\big(x\big(s+\frac 12\big)\big)-f\big(x\big(s-\frac{1}{2}\big)\big)}{x\big(s+\frac{1}{2}\big)-x\big(s-\frac{1}{2}\big)}=\mathbb{D}_{x} f(x(s)), \qquad x(s)=\frac{q^{-s}+q^s}{2}.
\end{gather*} The Wilson operator~(\ref{e65b}) is connected to the divided-difference operator (\ref{ddop}) as follows:
\begin{gather*}
\frac{\delta f\big(s^2\big)}{ \delta s^2}=\frac{f\big({-}\big({\rm i}s-\frac{1}{ 2}\big)^2\big)-f\big({-}\big({\rm i}s+\frac{1}{ 2}\big)^2\big)}{ -\big({\rm i}s-\frac{1}{ 2}\big)^2 +\big({\rm i}s+\frac{1}{ 2}\big)^2}=-\mathbb{D}_{x} f(-x({\rm i}s)),\qquad x(z)=z^2,\qquad z={\rm i}s.
\end{gather*}

\section{Generalized Bochner theorem }\label{2}
In this section, using properties of the divided-difference operator $\mathbb{D}_{x}$ and the averaging opera\-tor~$\mathbb{S}_{x}$, we discuss generalized versions of Bochner's theorem in~\cite{Mourad2003} and~\cite{VinetZhedanov} and derive explicit expressions for the polynomial solutions characterized by the results.
\begin{Lemma}\label{Prop4} Polynomial solutions $P_n(x(s))$, $\deg(P_n(x(s)))=n$, of the Sturm--Liouville type equation
\begin{gather}
\phi(x)\mathbb{D}_{x}^2y(x)+\psi(x)\mathbb{S}_{x}\mathbb{D}_{x} y(x)+\lambda y(x)=0, \label{e1.10.1}
\end{gather}
where $\phi(x)=\phi_2x^2+\phi_1x+\phi_0$ and $\psi(x)=\psi_1x+\psi_0$ are polynomials of degree at most two and one, can be expanded as
\begin{gather}\label{e39}
P_n(x(s))=\sum_{k=0}^{n} d_k\prod_{j=0}^{k-1}[x(s)-x(\eta+j)],
\end{gather}
where $\eta$ is a complex number such that $\sigma(x(\eta))=0$
where \begin{gather}
\label{sigma} \sigma(x(s))=\phi(x(s))-\frac{{\nabla} x_1(s)}{ 2}\psi (x(s)),
\end{gather} and $d_k$ is solution to the first-order recurrence relation
\begin{gather}\label{e39b}
\left(\lambda+\gamma_k\gamma_{k-1}\phi_2+\gamma_k\alpha_{k-1}\phi_1\right)d_k\\
\quad{}+\left(\gamma_{k}\gamma_{k+1}\left(\phi_2\left(x(\eta+k)+x(\eta)\right)+\phi_1-\frac{\psi_1{\nabla} x_1(\eta)}{ 2}\right)+\alpha_k\gamma_{k+1}\psi(x(\eta+k))\right)d_{k+1}=0,\nonumber
\end{gather} with $\lambda=-\gamma_{n}\gamma_{n-1}\phi_2-\gamma_n\alpha_{n-1}{\psi_1}$.
\end{Lemma}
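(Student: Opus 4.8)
The plan is to substitute the Newton-type expansion \eqref{e39} into the difference equation \eqref{e1.10.1} and extract a recurrence for the coefficients $d_k$ by comparing the action of the operators on the basis $\varphi_k(s):=\prod_{j=0}^{k-1}[x(s)-x(\eta+j)]$. The key preliminary computation is to understand how $\mathbb{D}_x$ and $\mathbb{S}_x$ act on this basis. Using the identities recalled in Section~\ref{2} — in particular $x(s+n)-x(s)=\gamma_n\nabla x_{n+1}(s)$ and $\tfrac12(x(s+n)+x(s))=\alpha_n x_n(s)+\beta_n$ together with the product rules \eqref{e16}, \eqref{e16a} — one shows by induction on $k$ that $\mathbb{D}_x\varphi_k$ is a polynomial of degree $k-1$ in $x(s)$ with leading behaviour governed by $\gamma_k$, and similarly that $\mathbb{S}_x\varphi_k$ is a polynomial of degree $k$ with leading coefficient $\alpha_k$. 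More precisely I expect the crucial identities to be of the shape
\begin{gather*}
\mathbb{D}_x\varphi_k(s)=\gamma_k\varphi_{k-1}\big(s+\tfrac12\big)\quad\text{(suitably interpreted)},\qquad
\mathbb{S}_x\varphi_k(s)=\varphi_k(s)+\big(\text{lower order}\big),
\end{gather*}
after re-expanding back into the $\varphi$-basis; the precise lower-order terms are what produce the two-term structure of \eqref{e39b}.

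Next I would apply $\phi(x)\mathbb{D}_x^2+\psi(x)\mathbb{S}_x\mathbb{D}_x$ to $\varphi_k$ and collect the result in the $\varphi$-basis. Because $\phi$ has degree $\le 2$ and $\psi$ degree $\le 1$, while $\mathbb{D}_x^2\varphi_k$ has degree $k-2$ and $\mathbb{S}_x\mathbb{D}_x\varphi_k$ has degree $k-1$, the image $\mathcal{L}\varphi_k$ lands in $\mathrm{span}\{\varphi_k,\varphi_{k-1},\dots\}$ but in fact — and this is the point of choosing $\eta$ with $\sigma(x(\eta))=0$ — the expansion truncates so that $\mathcal{L}\varphi_k$ only has components along $\varphi_k$ and $\varphi_{k-1}$. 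The coefficient of $\varphi_k$ in $\mathcal{L}\varphi_k$ is the "eigenvalue" $-(\gamma_k\gamma_{k-1}\phi_2+\gamma_k\alpha_{k-1}\psi_1)$ appearing in the statement; setting this equal to $-\lambda$ for $k=n$ fixes $\lambda$. Imposing $(\mathcal{L}+\lambda)P_n=0$ with $P_n=\sum_k d_k\varphi_k$ then forces, for each $k$, the coefficient of $\varphi_k$ to vanish, which links $d_k$ (from the diagonal term of $\mathcal{L}\varphi_k$) to $d_{k+1}$ (from the sub-diagonal term of $\mathcal{L}\varphi_{k+1}$), giving exactly \eqref{e39b}.

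The step I expect to be the main obstacle is verifying that the off-diagonal contributions really do stop at $\varphi_{k-1}$, i.e.\ that no $\varphi_{k-2},\varphi_{k-3},\dots$ terms appear. This is precisely where the factor $\sigma(x)=\phi(x)-\tfrac{\nabla x_1(s)}{2}\psi(x)$ in \eqref{sigma} and the condition $\sigma(x(\eta))=0$ enter: one needs to check that after writing $\phi\,\mathbb{D}_x^2\varphi_k+\psi\,\mathbb{S}_x\mathbb{D}_x\varphi_k$ in terms of shifted products $\prod_{j}[x(s)-x(\eta+j)]$, the potential $\varphi_{k-2}$-coefficient factors through $\sigma(x(\eta))$ and hence vanishes by the choice of $\eta$. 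Establishing this cleanly will require careful bookkeeping with the shift relations for $x_\mu(s)$ and the operator identities \eqref{e21a}, \eqref{e16b}; I would handle it by first treating the top two degrees symbolically to read off the $\varphi_k$- and $\varphi_{k-1}$-coefficients, and then showing the remaining terms are annihilated. Once the two-term recurrence \eqref{e39b} is in hand, one reads off $d_k$ recursively from $d_n$ (normalised by the leading coefficient), and since $\deg P_n=n$ is guaranteed by $d_n\neq 0$, the lemma follows.
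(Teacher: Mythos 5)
Your plan is essentially the paper's own proof: expand $P_n$ in the Newton basis $w_k(x(s),\eta)=\prod_{j=0}^{k-1}[x(s)-x(\eta+j)]$, use the explicit action and multiplication identities for $\mathbb{D}_{x}$, $\mathbb{S}_{x}$ and $x(s)$ on this basis, note that the only contribution which would lengthen the recurrence (the $w_{k-2}$-type term) carries the factor $\sigma(x(\eta))$ and so vanishes by the choice of $\eta$, and then equate coefficients to get the two-term relation, with $\lambda$ fixed by the degree-$n$ (i.e., $d_{n+1}=0$) condition. The one point to make precise is that in the identity $\mathbb{D}_{x}w_k=\gamma_k w_{k-1}$ the half-step shift acts on the parameter $\eta$, that is $\mathbb{D}_{x}w_k(x(s),\eta)=\gamma_k w_{k-1}\big(x(s),\eta+\tfrac12\big)$ (and similarly for $\mathbb{S}_{x}$), not on the argument $s$ — which is exactly the ``suitable interpretation'' you anticipated.
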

\begin{proof}Write
\begin{gather}\label{e39a}
w_k(x(s), \eta)=\prod_{j=0}^{k-1}[x(s)-x(\eta+j)], \qquad k>1\qquad \text{and} \qquad w_0(x(s))\equiv 1,
\end{gather}
and obtain by direct computation
\begin{gather}
\mathbb{D}_{x} w_k(x(s), \eta) = \gamma_kw_{k-1}\left(x(s), \eta+\frac{1}{ 2}\right),\label{e40}\\
\mathbb{S}_{x} w_k(x(s), \eta) = \alpha_kw_k\left(x(s), \eta-\frac{1}{ 2}\right)-\frac{\gamma_k {\nabla} x(\eta)}{ 2}w_{k-1}\left(x(s), \eta+\frac{1}{ 2}\right), \label{e41}\\
x(s)w_{k}(x(s), \eta) = w_{k+1}(x(s), \eta-1)+x(\eta-1)w_k(x(s),\eta),\label{e42} \\
w_k(x(s), \eta)= w_k(x(s), \eta+1)+(x(\eta+k)-x(\eta))w_{k-1}(x(s), \eta+1).\label{e43}
\end{gather}

Next, take $P_n(x(s))=\sum\limits_{k=0}^{\infty} d_kw_k(x(s), \eta)$ with $\phi(x(s))=\phi_2x(s)^2+\phi_1x(s)+\phi_0$ and $\psi(x(s))=\psi_1x(s)+\psi_0$ in (\ref{e1.10.1}) and use (\ref{e40})--(\ref{e42}) for simplification to derive
\begin{gather*}
\sum_{k=0}^{\infty}\!\bigg\{\!\gamma_k\!\left[\gamma_{k-1}\left(\phi_2\left(x(\eta-1)+x(\eta)\right)+\phi_1\right)+\!\left(\!\alpha_{k-1}\psi_1 x(\eta-1)-\psi_1 \frac{\gamma_{k-1}\nabla x_1(\eta)}{ 2}+\alpha_{k-1}\psi_0\!\right)\right]\\\qquad{}\times w_{k-1}(x(s), \eta) +\left(\gamma_k\gamma_{k-1}\phi_2+\gamma_k\alpha_{k-1}\psi_1\right)w_k(x(s), \eta-1)+\lambda\,w_k(x(s),\eta)+\sigma (x(\eta))\\
\qquad{} \times \gamma_k\gamma_{k-1}w_{k-2}(x(s), \eta+1)\bigg\}d_k=0.
\end{gather*}

Now, use the fact that $\sigma (x(\eta))=0$ and the relation (\ref{e43}) to obtain
\begin{gather*}\sum_{k=0}^{\infty}\left(A_kd_k+B_kd_{k+1}\right)w_k(x(s), \eta)=0,\end{gather*}
and equate coefficients of $w_k$ to obtain the two-term recurrence relation
\begin{gather*}A_kd_k+B_kd_{k+1}=0, \qquad k\geq 0,\end{gather*}
where
\begin{gather*}
A_k=\lambda+\gamma_k\gamma_{k-1}\phi_2+\gamma_k\alpha_{k-1}\phi_1, \nonumber \\
B_k= \gamma_{k}\gamma_{k+1}\left[\phi_2\left(x(\eta+k)+x(\eta)\right)+\phi_1-\frac{\psi_1\nabla x_1(\eta)}{ 2}\right]+\alpha_k\gamma_{k+1}\psi(x(\eta+k)).
\end{gather*}By assumption, $P_n$ is a polynomial of degree $n$, that is $d_k=0$, $k\geq n+1$. Hence taking $k=n$ we obtain $\lambda=-\phi_2\gamma_n\gamma_{n-1}-\psi_1\gamma_n\alpha_{n-1}$. Taking into account this expression of $\lambda$ the required relation is obtained.
\end{proof}

\begin{Remark}\label{Prop5} The explicit expressions of $w_k(x(s),\eta)$ in (\ref{e39a}) for the corresponding lattices $x(s)$ are provided in the following table:
\begin{center}
\begin{tabular}{|c|c|}
\hline
Representation of $w_k(x(s),\eta)$ & On the lattice $x(s)$\\
\hline
$\big({-}\frac{q^{-\eta}}{ 2}\big)^kq^{-{k\choose 2}}\big(q^{\eta}q^{-s};q\big)_k \big(q^{\eta}q^{s};q\big)_k$ \tsep{4pt}\bsep{4pt} & $x(s)=\frac{q^{-s}+q^s}{ 2}$ \\
\hline
$\big({-}c_1q^{-\eta}\big)^kq^{-{k\choose 2}}\big(q^{\eta}q^{-s};q\big)_k\big(\frac{c_2}{ c_1}q^{\eta}q^{s};q\big)_k $ \tsep{4pt}\bsep{4pt} & $x(s)=c_1q^{-s}+c_2q^s+c_3$, $c_1\neq 0$ \\
\hline
$\big({-}c_1q^{-{\eta}}\big)^kq^{-{k\choose 2}}\big(q^{\eta}q^{-s};q\big)_k $\tsep{4pt}\bsep{4pt} & $x(s)=c_1q^{-s}+c_3$ \\
\hline
$(-c_4)^k\big(s+\frac{c_5}{ c_4}+\eta\big)_k(-s+\eta)_k $\tsep{3pt}\bsep{3pt} & $x(s)=c_4s^2+c_5s+c_6$, $c_4\neq 0$ \\
\hline
$(-c_5)^k(-s+\eta)_k $\tsep{2pt}\bsep{2pt} & $x(s)=c_5s+c_6$ \\
\hline
\end{tabular}
\end{center}
\end{Remark}

When the function $\sigma (x(s))$ (with $x(s)=c_1q^{-s}+c_2q^s$, $c_1c_2\neq 0$) happens to be of the form $C(q^s)^m$, $m=0,1,\dots$, it has no zeros and therefore Lemma~\ref{Prop4} can no longer be used for expanding polynomial solutions of the Sturm--Liouville type equation (\ref{e1.10.1}). We will see later that this problem arises for the special case of~(\ref{e1.10.1}) when
\begin{gather}\label{symcase}
\phi(x)=\phi_2x^2+\phi_0 \qquad \text{and} \qquad \psi(x)=\psi_1x.
\end{gather}
In \cite{kfk2017}, a method for solving (\ref{e1.10.1}), when $\phi$ and $\psi$ are of the form (\ref{symcase}), was developed using the generalized form of the basis $\rho_n(x)=\big(1+{\rm e}^{2{\rm i}\theta}\big)\big({-}q^{2-n}{\rm e}^{2{\rm i}\theta};q^2\big)_{n-1}{\rm e}^{-{\rm i}n\theta}$, $x=\cos\theta$ (cf.\ \cite[equation~(20.3.8)]{Mourad-2005}) on the lattice $x(s)=c_1q^{-s}+c_2q^s$. This result can be written as:

\begin{Lemma}[{\cite[Theorem~13]{kfk2017}}]\label{Prop6} On $q$-quadratic lattices $x(s)=c_1q^{-s}+c_2q^s$, polynomial solutions~$P_n$ of~\eqref{e1.10.1} when $\phi$ and $\psi$ are of the form
\begin{gather*}\phi(x(s))=\phi_2x(s)^2+\phi_0, \qquad \psi(x(s))=\psi_1x(s),
\end{gather*}
can be expanded as
\begin{gather*}P_n(x(s))=\sum_{k=0}^{[\frac{n}{ 2}]}d_{n-2k}K_{n-2k}(x(s)),\end{gather*}
where
\begin{gather*}K_j(x(s))=\big(c_1q^{-s}\big)^j\left(1+\frac{c_2}{ c_1}q^{2s}\right)\left(-\frac{c_2}{ c_1}q^{-j+2}q^{2s};q^2\right)_{j-1}, \qquad K_0(x(s))=1,\qquad j\geq 1,\end{gather*}
and $d_j$ is given by the two-term recurrence relation
\begin{gather*}
\big(\gamma_j\big(\phi_2\gamma_{j-1}+(\gamma_j-\alpha \gamma_{j-1})\psi_1\big)+\lambda\big)d_j+\gamma_{j+2}\gamma_{j+1}\phi\big({\rm i}\sqrt{c_1c_2}\big(q^\frac{j}{ 2}-q^{-\frac{j}{ 2}}\big)\big)d_{j+2}\\
\qquad{} +\psi_1\gamma_{j+2}\big(\gamma_j\big({\rm i}\sqrt{c_1c_2}\big(q^\frac{j+1}{ 2}-q^{-\frac{j+1}{ 2}}\big)\big)^2-\alpha\gamma_{j+1}\big({\rm i}\sqrt{c_1c_2}\big(q^\frac{j}{ 2}-q^{-\frac{j}{ 2}}\big)\big)^2\big) d_{j+2}
=0,
\end{gather*} with the coefficient $\lambda=-\gamma_{n}\gamma_{n-1}\phi_2-\gamma_n\alpha_{n-1}{\psi_1}$.
\end{Lemma}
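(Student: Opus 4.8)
The plan is to rerun the argument of Lemma~\ref{Prop4}, but with a basis adapted to the configuration (\ref{symcase}), in which $\sigma$ is a monomial $C(q^s)^m$ with no zero, so that the ladder functions $w_k(x(s),\eta)$ of (\ref{e39a}) are no longer available. On the symmetric $q$-quadratic lattice $x(s)=c_1q^{-s}+c_2q^s$ the right replacement, as in \cite{kfk2017}, is the family $K_j(x(s))$ of the statement, which is the specialisation to this lattice of the basis $\rho_j(x)=\big(1+{\rm e}^{2{\rm i}\theta}\big)\big({-}q^{2-j}{\rm e}^{2{\rm i}\theta};q^2\big)_{j-1}{\rm e}^{-{\rm i}j\theta}$. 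One first checks that $K_j$ is a polynomial of degree $j$ in $x$, even or odd according to the parity of $j$ (in particular $K_1=x$ and $K_2=x^2$), so that $\{K_0,K_2,\dots\}$ spans the even polynomials and $\{K_1,K_3,\dots\}$ the odd ones; since $\phi$ is even and $\psi$ odd, a polynomial solution $P_n$ of (\ref{e1.10.1}) will be of parity $(-1)^n$ in $x$, and this is why the expansion runs over the $K_{n-2k}$ only.

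The first and main step is to establish the analogues for $K_j$ of (\ref{e40})--(\ref{e43}): explicitly, $\mathbb{D}_{x}K_j=\gamma_jK_{j-1}$ (hence $\mathbb{D}_{x}^2K_j=\gamma_j\gamma_{j-1}K_{j-2}$), a formula for $\mathbb{S}_{x}K_j$, and one for $x(s)K_j(x(s))$ as $K_{j+1}$ plus a combination of $K_{j-1},K_{j-3},\dots$ whose bottom term is $K_1$ or $K_2$ according to the parity of $j$; these follow by manipulating the $q^2$-shifted Pochhammer symbols together with the explicit $\alpha_j,\gamma_j$ recalled in Section~\ref{2}. The decisive consequence, which I expect to be the hardest point to pin down cleanly, is that after substituting $P_n(x(s))=\sum_k d_kK_k(x(s))$ with $\phi=\phi_2x^2+\phi_0$, $\psi=\psi_1x$ into (\ref{e1.10.1}) and reducing to the basis $\{K_k\}$, the operator $\phi\mathbb{D}_{x}^2+\psi\mathbb{S}_{x}\mathbb{D}_{x}$ becomes \emph{bidiagonal}: $\big(\phi\mathbb{D}_{x}^2+\psi\mathbb{S}_{x}\mathbb{D}_{x}\big)K_k=E_kK_k+L_kK_{k-2}$, where $E_k=\gamma_k\big(\phi_2\gamma_{k-1}+(\gamma_k-\alpha\gamma_{k-1})\psi_1\big)$ and $L_k$ involves the value of $\phi$ at the lattice point ${\rm i}\sqrt{c_1c_2}\big(q^{k/2}-q^{-k/2}\big)$, which plays here the role of $x(\eta+k)$ in Lemma~\ref{Prop4}. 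This is the $q$-quadratic counterpart of the classical fact that $\phi D^2+\psi D$ sends $x^m$ to a combination of $x^m$ and $x^{m-2}$ when $\phi$ is even and $\psi$ odd.

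Granting the bidiagonality, equating the coefficient of $K_j$ in (\ref{e1.10.1}) to zero gives $\big(E_j+\lambda\big)d_j+L_{j+2}d_{j+2}=0$, which is the claimed two-term recurrence once $E_j$ and $L_{j+2}$ are expanded (using $\gamma_j-\alpha\gamma_{j-1}=\alpha_{j-1}$). In particular the $d_k$ with $k\not\equiv n\pmod 2$ satisfy a homogeneous triangular system with non-vanishing diagonal for generic $\phi_2,\psi_1$, hence vanish, giving $P_n=\sum_{k=0}^{[n/2]}d_{n-2k}K_{n-2k}$; and since $\deg P_n=n$ forces $d_k=0$ for $k\ge n+1$, the recurrence at $j=n$ reduces to $E_n+\lambda=0$, i.e.\ $\lambda=-\gamma_n\gamma_{n-1}\phi_2-\gamma_n\alpha_{n-1}\psi_1$, and substituting this value back yields the displayed recurrence. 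The heart of the matter is thus the first step: computing $\mathbb{S}_{x}K_j$ and $x(s)K_j$ on a general $q$-quadratic lattice and verifying the bidiagonality of $\phi\mathbb{D}_{x}^2+\psi\mathbb{S}_{x}\mathbb{D}_{x}$, which requires careful bookkeeping of the $q^2$-Pochhammer shifts and of $\sqrt{c_1c_2}$.
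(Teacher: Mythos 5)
You should first note that the paper itself does not prove this lemma: it is imported verbatim from \cite[Theorem~13]{kfk2017}, so there is no in-paper argument to compare against. Your plan is the natural one and is, in spirit, exactly the method of Lemma~\ref{Prop4} transported to the basis $K_j$ (i.e., the generalized $\rho_n$-basis of \cite[equation~(20.3.8)]{Mourad-2005} on the lattice $x(s)=c_1q^{-s}+c_2q^s$), which is also what the cited reference does. Your structural claims are consistent with the statement and check out in low degrees: $K_1=x$, $K_2=x^2$, $\mathbb{D}_{x}K_j=\gamma_jK_{j-1}$, the diagonal coefficient $E_k=\gamma_k\big(\phi_2\gamma_{k-1}+(\gamma_k-\alpha\gamma_{k-1})\psi_1\big)$ (using $\gamma_k-\alpha\gamma_{k-1}=\alpha_{k-1}$), and the subdiagonal coefficient involving $\phi$ evaluated at ${\rm i}\sqrt{c_1c_2}\big(q^{j/2}-q^{-j/2}\big)$ all agree with the displayed recurrence, and the identification of $\lambda$ from $d_{n+1}=0$ is correct.

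The gap is that the decisive step is only asserted, not established, and it is genuinely nontrivial. Since $xK_j=K_{j+1}+c_jK_{j-1}$ with $c_j\neq0$ for $j\geq2$, the two pieces $\phi_2x^2\mathbb{D}_{x}^2K_k$ and $\psi_1x\,\mathbb{S}_{x}\mathbb{D}_{x}K_k$ each produce a nonzero $K_{k-4}$ component for $k\geq5$; the claimed bidiagonality of $\phi\mathbb{D}_{x}^2+\psi\mathbb{S}_{x}\mathbb{D}_{x}$ on the $K_j$ therefore rests on an exact cancellation of these $K_{k-4}$ terms (one can verify it by hand for $k=5$ using $\mathbb{S}_{x}x^4=\alpha_4x^4+2\gamma_3C_xx^2+C_x^2$, but a general proof requires the explicit formulas for $\mathbb{S}_{x}K_j$ and $xK_j$ obtained from (\ref{e16})--(\ref{e16b}) and the $q^2$-Pochhammer structure, or an induction in the spirit of (\ref{e40})--(\ref{e43})). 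Without this identity the two-term (rather than three-term) recurrence for the $d_k$, which is the whole content of the lemma, is not justified. A secondary, smaller issue: your argument that the coefficients $d_k$ with $k\not\equiv n\pmod 2$ vanish relies on $E_j+\lambda\neq0$ for those $j$, i.e., on genericity of $\phi_2,\psi_1$; a clean proof should instead invoke the parity-preservation of the operator (with $\phi$ even, $\psi$ odd) to reduce to solutions of definite parity, or otherwise treat the degenerate coincidences $E_j=E_n$ explicitly.
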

Ismail \cite{Mourad2003} gave the following generalization of Bochner's theorem for Askey--Wilson polynomials where $\mathcal{S}_q$ (cf.\ \cite[equation~(12.1.21)]{Mourad-2005}) is the restriction of the averaging operator \begin{gather*}
\mathbb{S}_{x} f(x(s))=\frac{f\big(x\big(s+\frac{1}{ 2}\big)\big)+f\big(x\big(s-\frac{1}{ 2}\big)\big)}{ 2}
\end{gather*} to functions of the variable $x=\cos\theta=\frac{q^{-s}+q^s}{2}$, $q^s={\rm e}^{{\rm i}\theta}$.
\begin{Theorem}[{\cite[Theorem~3.1]{Mourad2003}}]\label{th2} The Sturm--Liouville type equation
\begin{gather}\label{qsturm}
\phi(x)\mathcal{D}_q^2y(x)+\psi(x)\mathcal{S}_q\mathcal{D}_qy(x)+\lambda_ny(x)=0, \qquad x=\cos\theta,
\end{gather}
where $\phi$ and $\psi$ are polynomials of degree at most $2$ and $1$, has a polynomial solution~$P_n(x)$ of degree $n=1,2,3,\dots$ if and only if $P_n(x)$ is a multiple of the Askey--Wilson polynomial $p_n(x;a,b,c,d\,|\,q)$ for some parameters $a$, $b$, $c$, $d$, including limiting cases as one or more of the parameters tend to~$\infty$.
\end{Theorem}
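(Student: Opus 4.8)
The plan is to prove the two implications separately; the substantive one is that every polynomial eigenfunction sequence of (\ref{qsturm}) must be an Askey--Wilson family. For the easy direction I would verify directly that $p_n(x;a,b,c,d\,|\,q)$ solves (\ref{qsturm}) with $\phi$ and $\psi$ the explicit polynomials of degrees at most $2$ and $1$ whose coefficients are elementary functions of $a,b,c,d,q$, and with $\lambda_n$ a constant multiple of $q^{-n}(1-q^n)\big(1-abcdq^{n-1}\big)$; this reduces to computing $\mathcal{D}_q^2p_n$ and $\mathcal{S}_q\mathcal{D}_qp_n$ from the ${}_4\phi_3$-representation (\ref{e58}) together with the action of $\mathcal{D}_q$ and $\mathcal{S}_q$ on the basis $\big\{\big(aq^{-s},aq^s;q\big)_k\big\}_k$, $q^s={\rm e}^{{\rm i}\theta}$. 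The families obtained as one or more parameters tend to $\infty$ then satisfy the corresponding limit of (\ref{qsturm}) by continuity.

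For the converse I would first observe that on the lattice $x(s)=\cos\theta=\frac{q^{-s}+q^s}{2}$ one has $\mathcal{D}_q=\mathbb{D}_{x}$, $\mathcal{S}_q=\mathbb{S}_{x}$, $\alpha=\frac{q^{1/2}+q^{-1/2}}{2}$ and $\beta=0$, so that (\ref{qsturm}) is precisely the case of (\ref{e1.10.1}) handled by Lemma~\ref{Prop4}. Substituting a polynomial of degree $n$ into (\ref{qsturm}) and comparing the coefficients of the top power forces $\lambda_n=-\phi_2\gamma_n\gamma_{n-1}-\psi_1\gamma_n\alpha_{n-1}$, the eigenvalue appearing in Lemma~\ref{Prop4}. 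I would then split according to whether the function $\sigma$ of (\ref{sigma}) vanishes somewhere.

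If $\sigma$ has a zero $x(\eta)$, Lemma~\ref{Prop4} and the first row of the table in Remark~\ref{Prop5} give
\[
P_n(x(s))=\sum_{k=0}^{n}d_k\Bigl(-\frac{q^{-\eta}}{2}\Bigr)^{k}q^{-\binom{k}{2}}\big(q^{\eta}q^{-s};q\big)_k\big(q^{\eta}q^{s};q\big)_k,
\]
with the $d_k$ determined by the two-term recurrence (\ref{e39b}). Setting $a:=q^{\eta}$ and expressing $\sigma$, $\gamma_k\gamma_{k-1}$ and $\gamma_k\alpha_{k-1}$ as Laurent polynomials in $q^{s}$, respectively $q^{k}$, I expect that $\sigma$, viewed as a quartic in $q^{s}$, has three further zeros $b$, $c$, $d$, that the $d_k$-coefficient in (\ref{e39b}) factors through $\big(1-q^{k-n}\big)\big(1-abcdq^{k+n-1}\big)$ and that the $d_{k+1}$-coefficient factors through $\big(1-abq^k\big)\big(1-acq^k\big)\big(1-adq^k\big)\big(1-q^{k+1}\big)$; then $d_{k+1}/d_k$ is exactly the term-ratio of the ${}_4\phi_3$ in (\ref{e58}), so $P_n$ is a constant multiple of $p_n(x;a,b,c,d\,|\,q)$. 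When $\sigma$ has degree less than $4$ in $q^{s}$, or its zeros collide, the same calculation should produce the confluent and limiting families. If instead $\sigma$ has no zero it must be a monomial in $q^{s}$, which forces $\phi(x)=\phi_2x^2+\phi_0$ and $\psi(x)=\psi_1x$ as in (\ref{symcase}); then I would invoke Lemma~\ref{Prop6} and carry out the analogous factorization of the recurrence for the coefficients in the $\{K_j\}$-basis, recognising $P_n$ as a symmetric Askey--Wilson polynomial or, in the degenerate cases, a continuous $q$-Hermite type limit.

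The hard part will be the bookkeeping in the converse. To see that $d_{k+1}/d_k$ (respectively its analogue in the symmetric case) is a ${}_4\phi_3$ term-ratio one must pin down exactly which symmetric functions of the zeros of $\sigma$ play the roles of $ab$, $ac$, $ad$ and of the factor $abcdq^{n-1}$, which requires a careful manipulation of $q$-shifted factorials together with the explicit expressions for $\alpha_n$ and $\gamma_n$; and each of the several degenerate configurations (one of $\phi_2$, $\psi_1$, or the leading coefficient of $\sigma$ vanishing, or two zeros coinciding) has to be treated on its own and identified with the announced limit of the Askey--Wilson polynomials as one or more parameters tend to $\infty$.
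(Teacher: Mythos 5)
Your proposal is correct and follows essentially the same route as the paper: the paper's scheme likewise factors the numerator $P(X)$ of $\sigma$ in (\ref{sigma}) with $X=q^s$, identifies its nonzero roots with the parameters $a,b,c,d$, applies Lemma~\ref{Prop4} with $q^{\eta}=a$ and the first row of Remark~\ref{Prop5}, iterates (\ref{e39b}) to recognise the ${}_4\phi_3$ term-ratio, and falls back on Lemma~\ref{Prop6} exactly when $P(X)$ is a monomial, i.e., in the symmetric case (\ref{symcase}). Your added direct verification of the ``if'' direction and the bookkeeping of degenerate configurations are consistent with the paper's case-by-case list of limiting families.
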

In order to obtain all the explicit solutions characterized by (\ref{qsturm}) we use the following scheme:
\begin{enumerate}\itemsep=0pt
\item Since $\phi(x(s))$ is at most quadratic and $\psi(x(s))$ is linear, we write
\begin{gather*} \phi(x(s))=\phi_2x(s)^2+\phi_1x(s)+\phi_0, \qquad \psi(x(s)=\psi_1x(x(s))+\psi_0.\end{gather*} Substituting $\phi(x(s))$, $\psi(x(s))$ and $x(s)=\frac{q^{-s}+q^s}{ 2}$ into (\ref{sigma}) we obtain, for $X=q^s$, $\sigma(x(s))=\frac{P(X)}{X^2}$, where
\begin{gather}
8\sqrt{q}P(X) = \big( 2 \phi_2\sqrt {q}-q\psi_1+\psi_1 \big) {X}^{4}+ \big( 4 \sqrt {q}\phi_1-2 q\psi_0+2 \psi_0 \big) {X}^{3}\nonumber\\
\quad{} + \big( 8\sqrt {q}\phi_0+4 \phi_2\sqrt {q} \big) {X}^{2}+ \big( 4 \sqrt {q}\phi_1+2 q\psi_0-2 \psi_0 \big) X+2 \phi_2\sqrt {q}+q\psi_{{1}}-\psi_1. \label{e38a}
\end{gather}
\item Suppose $P(X)$ is of degree 4, that is $\psi_1\neq \frac{2\phi_2 \sqrt{q}}{ q-1}$, and write
\begin{gather*}P(X)=C(X-a)(X-b)(X-c)(X-d),\qquad a,b,c,d\in \mathbb{C}.\end{gather*}
\item Expand the factorized form of $P(X)$ and identify coefficients of $X^i$, $i=0, 1, 2, 3, 4$ with those in (\ref{e38a}) to obtain a system of five equations.
\item Solve the system with unknowns $\phi_2$, $\phi_1$, $\phi_0$, $\psi_1$ and $\psi_0$ to obtain polynomial coefficients of (\ref{qsturm}).
\item If one of the $a$, $b$, $c$, $d$, say $a$, is different from $0$, then use Lemma \ref{Prop4}, with $q^{\eta}=a$, to obtain polynomial solution of~(\ref{qsturm}) of the form
\begin{gather}\label{e46}
\sum_{k=0}^{n}d_kw_k(x,\eta),
\end{gather}
where $\frac{d_{k+1}}{ d_k}$ is given by (\ref{e39b}).
\item Iterate (\ref{e39b}) to obtain $d_k$ and use it as well as the representation of $w_k$, for the lattice $x(s)=\frac{q^{s}+q^s}{ 2}$ (see Remark \ref{Prop5}), to obtain the basic hypergeometric representation of (\ref{e46}).
\item If none of $a$, $b$, $c$ and $d$, is different from zero (which corresponds to $\phi(x)=\phi_2x^2+\phi_0$ and $\psi (x)=\psi_1x$), use Lemma~\ref{Prop6} to solve~(\ref{qsturm}).
\end{enumerate}
At the end of the day, one has the following:
\begin{enumerate}\itemsep=0pt
\item If P(X) is of degree 4,
\begin{itemize}\itemsep=0pt
\item If $P(X)=CX^4$, then polynomial coefficients of (\ref{qsturm}) are up to a multiplicative factor equal to
\begin{gather*}
\phi(x(s))=2x(s)^2-1, \qquad \psi(x(s))=-\frac{4\sqrt{q}}{ q-1}x(s),
\end{gather*}
and the corresponding polynomial, with $q^s={\rm e}^{{\rm i}\theta}$, is up to a multiplicative factor equal to
\begin{gather*}
\sum_{k=0}^{\left[\frac{n}{ 2}\right]}d_{n-2k}K_{n-2k}(x),
\end{gather*}
with
\begin{gather*}\frac{d_{n-2k}}{ d_{n-2(k-1)}}=-\frac{1}{ 4}\frac{\big(1-q^{-n-1}q^{2k}\big)\big(1-q^{-n-2}q^{2k}\big)}{ 1-q^{2k}}q^{n+1},\\ d_{n-2k}=d_n\frac{\big(q^{-n};q\big)_{2k}}{ \big(q^2;q^2\big)_k}\left(-\frac{1}{ 4}q^{n+1}\right)^{k}.\end{gather*}
Taking $d_n=1$, we obtain after straightforward computation
\begin{gather*}
\sum_{k=0}^{[\frac{n}{ 2}]}\frac{\big(q^{-n},q\big)_{2k}}{ \big(q^2, q^2\big)_k}\left(-\frac{q^{n+1}}{ 4}\right)^{k}K_{n-2k}(x)=2^{-n}H_n(x\,|\,q),
\end{gather*}
where $H_n(x\,|\,q)$ is the continuous $q$-Hermite polynomial \cite[equation~(14.26.1)]{KSL}.
\item If $P(X)=C(X-a)X^3$, $a\neq 0$, then coefficients of (\ref{qsturm}) are up to a multiplicative factor equal to
\begin{gather*}
\phi(x(s)) = 2x(s)^2-ax(s)-1, \qquad \psi(x(s)) = -\frac{4\sqrt{q}}{ q-1}x(s)+\frac{2a\sqrt{q}}{ q-1}.
\end{gather*}
So,
\begin{gather*}\frac{d_{k+1}}{ d_k}= -2 {\frac {{q}^{k}qa \big( 1-{q}^{k}{q}^{-n} \big) }{ \big( 1-{q}^{k}q \big) }},
d_k={\frac { \big( {q}^{-n};q\big)_k ( -2 qa ) ^{k}{q}^{{k\choose 2}}}{ ( q;q)_k }}d_0,\end{gather*}
and the basic hypergeometric representation of the corresponding polynomial, with $q^s={\rm e}^{{\rm i}\theta}$, is
\begin{gather*}_3 \phi_2 \left(
\begin{matrix} q^{-n}, a{\rm e}^{-{\rm i}\theta},a{\rm e}^{{\rm i}\theta}\\ 0,0
\end{matrix}
; q,q\right)=a^nH_n(x;a\,|\,q),
\end{gather*}
where $H_n(x;a|q)$ is the continuous big $q$-Hermite polynomial \cite[equation~(14.18.1)]{KSL}.
\item If $P(X)=C(X-a)(X-b)X^2$, $ab\neq0$,
\begin{gather*}
\phi(x(s))=2x(s)^2-(a+b)x(s)+ab-1, \\
 \psi(x(s))=-\frac{4\sqrt{q}}{ q-1}x(s)+\frac{2\sqrt{q}(a+b)}{ q-1},\\
\frac{d_{k+1}}{ d_k}=-2 {\frac {{q}^{k}qa \big( 1-{q}^{-n}{q}^{k} \big) }{ \big( 1-{q}^{k}q \big) \big(1- {q}^{k}ab \big) }}, \qquad d_k={\frac { \big( {q}^{-n};q \big)_k ( -2 qa ) ^{k}{q}^{{k\choose 2}}}{( q;q)_k ( ab;q)_k }}d_0,\end{gather*}
and the basic hypergeometric representation of the corresponding polynomial, with $q^s={\rm e}^{{\rm i}\theta}$, is
\begin{gather*}_3 \phi_2 \left(
\begin{matrix} q^{-n}, a{\rm e}^{-{\rm i}\theta},a{\rm e}^{{\rm i}\theta}\\ ab,0
\end{matrix}
; q,q\right) =\frac{a^n}{ (ab;q)_n}Q_n(x; a, b\,|\,q),
\end{gather*}
where $Q_n(x; a, b\,|\,q)$ is the Al-Salam Chihara polynomial \cite[equation~(14.8.1)]{KSL}.
\item If $P(X)=C(X-a)(X-b)(X-c)X$, $abc\neq0$,
\begin{gather*}
\phi(x(s))=2x(s)^2-(abc+a+b+c)x(s)+ab+ac+bc-1,\\
\psi(x(s))=-\frac{4\sqrt{q}}{ q-1}x(s)-\frac{2\sqrt{q}(abc-a-b-c)}{ q-1},
\\
\frac{d_{k+1}}{ d_k}=-2 {\frac {{q}^{k}qa \left( 1-{q}^{-n}{q}^{k} \right) }{ \left(1- {q}^{k}q \right) \left( 1-{q}^{k}ac \right) \left(1- {q}^{k}ab \right) }},\qquad d_k={\frac { \left( {q}^{-n};q \right)_k \left( -2 qa \right) ^{k}{q}^{{k\choose 2}}}{ \left( q;q
\right)_k \left( ab;q \right)_k \left( ac;q \right)_k }}d_0,
\end{gather*}
and the basic hypergeometric representation of the corresponding polynomial is
\begin{gather*}_3 \phi_2 \left(
\begin{matrix} q^{-n}, a{\rm e}^{-{\rm i}\theta},a{\rm e}^{{\rm i}\theta}\\ ab,ac
\end{matrix}; q,q\right)=\frac{a^np_n(x;a,b,c\,|\,q)}{ (ab, ac;q)_n},
\end{gather*}
where $p_n(x;a,b,c|q)$ is the continuous dual $q$-Hahn polynomial \cite[equation~(14.3.1)]{KSL}.
\item If $P(X)=C(X-a)(X-b)(X-c)(X-d)$, $abcd\neq0$,
\begin{gather*}
\phi(x(s))=2(abcd+1)x(s)^2-(abc+abd+acd+bcd+a+b+c+d)x(s)\\
\hphantom{\phi(x(s))=}{} -abcd+ab+ac+ad+bc+bd+cd-1,\\
\psi(x(s))=4 {\frac {\sqrt {q} ( abcd-1) }{q-1}}x(s)-2 {\frac {\sqrt {q} ( abc+abd+acd+bcd-a-b-c-d) }{q-1}},\\
\frac{d_{k+1}}{ d_k}=-2 {\frac {{q}^{k}qa \big(1-{q}^{k}{q}^{-n} \big) \big( 1-{q}^{n-1}abcd{q}^{k} \big) }{
\big(1- {q}^{k}q \big) \big(1- {q}^{k}ad\big) \big( 1-{q}^{k}ac \big) \big( 1-{q}^{k}ab\big) }},\\
d_k={\frac {( -2 aq ) ^{k} \big( {q}^{-n};q \big)_k {q}^{{k\choose 2}} \big( {q}^{n-1}abcd;q \big)_k }{( q;q)_k ( ad;q)_k ( ac;q )_k( ab;q )_k }}d_0,
\end{gather*}
and the basic hypergeometric representation of the corresponding polynomial is
\begin{gather*}_4 \phi_3 \left(
\begin{matrix} q^{-n}, abcdq^{n-1}, a{\rm e}^{-{\rm i}\theta},a{\rm e}^{{\rm i}\theta}\\ ab, ac, ad
\end{matrix}; q,q \right)=\frac{a^np_n(x;a,b,c,d;q)}{ (ab,ac,ad;q)_n},
\end{gather*}
where $p_n(x;a,b,c,d\,|\,q)$ is the Askey--Wilson polynomial \cite[equation~(14.1.1)]{KSL}.
\end{itemize}

In the following items, we are going to consider cases for which degree of $P(X)<4$. For each of them, after giving the factorized form of P(X), we will follow steps 3 and 4 of the scheme to look for $\phi_2$, $\phi_1$, $\phi_0$, $\psi_1$ and $\psi_0$. Then we will follow steps~5 and~6 for degree of $P(X)=1,2,3$ and steps~5 and~7 for degree $P(X)=0$ to obtain the corresponding polynomials system.
\item If $P(X)$ is of degree 3 and $P(X)=C(X-a)(X-b)(X-c)$ with none of~$a$, $b$ and $c$ equal to zero, then
\begin{gather*}
\phi(x(s)) = -2abcx(s)^2+(ab+ac+bc+1)x(s)+abc-a-b-c, \\
\psi(x(s)) = -\frac{4abc\sqrt{q}}{ q-1}x(s)+2\frac{\sqrt{q}(ab+ac+bc-1)}{ q-1},
\\
\frac{d_{k+1}}{ d_k}=2 {\frac {{q}^{k}abc \big( {q}^{n}-{q}^{k} \big) d ( k) }{ \big( {q}^{k}q-1 \big) \big( {q}^{k}ac-1 \big) \big( {q}^{k}ab-1 \big) }},\qquad
d_k={\frac { \big( {q}^{-n},q \big)_k \big({-}2 abc{q}^{n} \big) ^{k}{q}^{{k\choose 2}}}{( q,q)_k ( ac;q )_k ( ab;q )_k }}d_0,
\end{gather*}
and the basic hypergeometric representation of the corresponding polynomial is
\begin{gather*}\sum_{k=0}^{n}\frac{\big(q^{-n};q\big)_k\big(aq^s,aq^{-s};q\big)_k}{ (ac;q)_k(ab;q)_k}\frac{\big(bcq^n\big)^k}{ (q;q)_k}=\lim_{d\rightarrow\infty}\frac{a^np_n(x;a,b,c,d\,|\,q)}{(ab;q)_n (ac;q)_n(ad;q)_n}.\end{gather*}
\item If $P(X)$ is of degree 2 and $P(X)=C(X-a)(X-b)$, $a,b\neq 0$, then
\begin{gather*}\phi(x)=2abcx^2-(a+b)x+1-ab, \qquad \psi(x)=\frac{4ab\sqrt{q}}{ q-1}x-\frac{2\sqrt{q}(a+b)}{ q-1},\\
 \frac{d_{k+1}}{ d_k}=2 {\frac{ {b} {q}^{n}\big( 1-{q}^{k}{q}^{-n} \big)
}{ \big( 1-{q}^{k}q \big) \big( 1-{q}^{k}ab \big) }
}, \qquad d_k={\frac { \big( {q}^{-n},q \big)_k ( 2 b{q}^{n}) ^{k}}{ ( q;q )_k ( ab;q )_k }}d_0,
\end{gather*}
and the basic hypergeometric representation of the corresponding polynomial is
\begin{gather*}\sum_{k=0}^{n}\frac{\big(q^{-n};q\big)_k\big(aq^s,aq^{-s};q\big)_k{q}^{-{k\choose 2}}}{ (ab;q)_k}\frac{\big(-\frac{bq^n}{a}\big)^k}{ (q;q)_k}=\lim_{c,d\rightarrow\infty}\frac{a^np_n(x;a,b,c,d\,|\,q)}{(ab;q)_n (ac;q)_n(ad;q)_n}.\end{gather*}

\item If $P(X)$ is of degree 1 and $P(X)=C(X-a)$, $a\neq 0$, then
\begin{gather*}\phi(x)=-2ax(s)^2+x(s)+a,\qquad \psi(x)=-\frac{4\sqrt{q}a}{ q-1}x(s)+\frac{2\sqrt{q}}{ q-1},\\
\frac{d_{k+1}}{ d_k}=2 {\frac { {q}^{n}\big( 1-{q}^{k}{q}^{-n} \big) }{{q}^{k
}a \big( {q}^{k}q-1 \big) }}, \qquad d_k={\frac {\big( {q}^{-n};q \big)_k {q}^{-{k\choose
2}}}{( q;q)_k } \left( -2 {\frac {{q}^{n}}{a}} \right) ^{k}}d_0,
\end{gather*}
and the basic hypergeometric representation of the corresponding polynomial is
\begin{gather*}\sum_{k=0}^{n}\big(q^{-n};q\big)_k\big(aq^s,aq^{-s};q\big)_k{q}^{-2{k\choose 2}}\frac{\big(\frac{q^n}{ a^2}\big)^k}{ (q;q)_k}=\lim_{b,c,d\rightarrow\infty}\frac{a^np_n(x;a,b,c,d\,|\,q)}{(ab;q)_n (ac;q)_n(ad;q)_n}.\end{gather*}

\item If $P(X)$ is a constant, that is $P(X)=C$, then
\begin{gather*}\phi(x)=2x(s)^2-1, \qquad \psi(x)=\frac{4\sqrt{q}}{ q-1}x(s),\end{gather*}
and the corresponding polynomial is up to a multiplicative factor equal to
\begin{gather*}
\sum_{k=0}^{[\frac{n}{ 2}]}d_{n-2k}K_{n-2k}(x(s)),
\end{gather*}
where
\begin{gather*}\frac{d_{n-2k}}{ d_{n-2(k-1)}}=\frac{1}{ 4}\frac{\big(1-q^{-n-1}q^{2k}\big)\big(1-q^{-n-2}q^{2k}\big)q^{n+2}q^{-2k}}{ 1-q^{2k}}, \\ \frac{d_{n-2k}}{d_n}=\frac{\big(q^{-n};q\big)_{2k}}{ \big(q^2;q^2\big)_k}\left(\frac{q^{n}}{ 4}\right)^{k}q^{-2{k\choose 2}}.\end{gather*}
Take $d_n=1$ to obtain
\begin{gather*}
\sum_{k=0}^{[\frac{n}{ 2}]}\frac{\big(q^{-n};q\big)_{2k}}{ \big(q^2;q^2\big)_k}\left(\frac{q^{n}}{ 4}\right)^{k}q^{-2{k\choose 2}}K_{n-2k}(x(s))=2^{-n}H_n\big(x\,|\,q^{-1}\big)\\
\qquad {} =\lim_{a,b,c,d\rightarrow\infty}\frac{2^{-n}a^{2n}p_n(x;a,b,c,d\,|\,q)}{(ab;q)_n (ac;q)_n(ad;q)_n},
\end{gather*}
where $H(x\,|\,q)$ is the continuous $q$-Hermite polynomials.
\end{enumerate}

\begin{Remark}It is important to note that in each of the cases of items 2 to 4 above, if one of the $a$, $b$, $c$, $d$ is zero, then $\psi_1=0$, which is impossible because the degree of $\psi$ is equal to~1.
\end{Remark}
In order to expand on the generalization of Bochner's theorem in \cite{VinetZhedanov}, we need to connect the second-order difference equation (\ref{vinet}) used in \cite{VinetZhedanov} to the Sturm--Liouville type equation (\ref{e1.10.1}).
Let $P_n(x(s))$ be a polynomial solution to the second-order difference equation (\ref{vinet}). From the definition of $\mathbb{D}_{x}$ and $\mathbb{S}_{x}$ we have
\begin{gather*}
\nabla x_1(s)\mathbb{D}_{x}^2 P_n(x(s))=\frac{ P_n(x(s+1))
- P_n(x(s))}{ x(s+1)-x(s)}-\frac{ P_n(x( s))- P_n(x( s-1))}{ x( s)-x( s-1)},\\
2\mathbb{S}_{x}\mathbb{D}_{x} P_n(x(s))=\frac{ P_n(x(s+1))
- P_n(x(s))}{ x(s+1)-x(s)}+\frac{ P_n(x( s))- P_n(x( s-1))}{ x( s)-x( s-1)}.
\end{gather*} Solve the system with unknowns $\{P_n(x(s+1)),P_n(x(s-1))\}$ and substitute the solution into~(\ref{vinet}) to obtain a Sturm--Liouville type equation of the form~(\ref{e1.10.1}) where the coefficient of $P_n(x(s))$ is $A( s)+B( s)+C( s)- \lambda_n$. Taking $n=0$ in~(\ref{vinet}) and using the fact that $\lambda_0=0$ (see \cite[equation~(3.1)]{VinetZhedanov}) and $P_0=1$, we obtain
$A(s)+B(s)+C(s)=0$. Therefore $P_n$ is a~polynomial solution of~(\ref{e1.10.1}). This leads us to the following restatement of the generalization of Bochner's theorem in~\cite{VinetZhedanov}:
\begin{Theorem}\label{th2a} The Sturm--Liouville type equation
\begin{gather*}
\phi(x)\mathbb{D}_{x}^2y(x)+\psi(x)\mathbb{S}_{x}\mathbb{D}_{x} y(x)+\lambda_n y(x)=0, \end{gather*}
where $\phi$ and $\psi$ are polynomials of degree at most $2$ and $1$ respectively, and~$\lambda_n$ is a constant, has a polynomial solution $P_n(x)$ of degree $n=0,1,2,3,\dots$ if and only if
\begin{enumerate}\itemsep=0pt
\item[$(1)$] On $x(s)=c_1q^{-s}+c_2q^s$, $c_1\neq0$, $P_n(x)$ is, up to a multiplicative constant, equal to \begin{gather*}
_4 \phi_3 \left(
\begin{matrix} q^{-n},u^2abcdq^{n-1},aq^{-s},auq^s\\ abu,acu,adu
\end{matrix} ; q,q \right) \\
\qquad{} =\frac{\big(u^\frac{1}{ 2}a\big)^np_n\big(\frac{u^\frac{1}{ 2}q^s+u^{-\frac{1}{ 2}}q^{-s}}{2};u^\frac{1}{2}a,u^\frac{1}{ 2}b,u^\frac{1}{ 2}c,u^\frac{1}{2}d\,|\,q\big)}{ (abu,acu,adu;q)_n},
\end{gather*}
as well as subcases including limiting cases as one or more parameters $a$, $b$, $c$, $d$, tend to~$\infty$. Here, $p_n(x;a,b,c,d\,|\,q)$ denotes Askey--Wilson~\eqref{e58} polynomials and $u=\frac{c_2}{ c_1}$.
\item[$(2)$] On $x(s)=c_4s^2+c_5s$, $c_4\neq0$, $P_n(x)$ is, up to a multiplicative constant, equal to
\begin{gather}
_4 F_3 \left(
\begin{matrix} -n,a+b+c+d+2u+n-1,a-s,a+u+s
\\ a+b+u,a+c+u,a+d+u
\end{matrix}
; 1\right) \nonumber\\
\qquad{} =\frac{W_n\big({-}\big(s+\frac{u}{ 2}\big)^2;a+\frac{u}{ 2},b+\frac{u}{ 2},c+\frac{u}{ 2},d+\frac{u}{ 2}\big)}{ (a+b+u)_n(a+c+u)_n(a+d+u)_n},\label{e48}
\end{gather}
or the polynomial
\begin{gather}\label{e49}_3 F_2 \left(
\begin{matrix} -n,a-s,a+u+s\\ a+b+u,a+c+u
\end{matrix}; 1\right) =\frac{S_n\big({-}\big(s+\frac{u}{ 2}\big)^2;a+\frac{u}{ 2},b+\frac{u}{ 2},c+\frac{u}{2}\big)}{ (a+b+u)_n(a+c+u)_n(a+d+u)_n},
\end{gather} where $u=\frac{c_5}{c_4}$, $W_n$ denotes Wilson polynomials {\rm \cite[equation~(9.1.1)]{KSL}} and $S_n$ denotes continuous dual Hahn polynomials {\rm \cite[equation~(9.3.1)]{KSL}}.
\end{enumerate}
\end{Theorem}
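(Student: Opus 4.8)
The plan is to handle the two lattices in~(\ref{e60}) by a single method, using the equivalence established just above: a polynomial $P_n(x(s))$ of degree $n$ solves the displayed Sturm--Liouville type equation if and only if it is a polynomial solution of the second-order difference equation~(\ref{vinet}), so it suffices to analyse equation~(\ref{e1.10.1}) directly and then translate the outcome into named families. Since adding a constant $c_3$ (resp.\ $c_6$) to $x(s)$ only shifts the variable and can be absorbed into $\phi$ and $\psi$, we may take $c_3=0$ (resp.\ $c_6=0$) as in the statement, and we write $\phi(x(s))=\phi_2x(s)^2+\phi_1x(s)+\phi_0$, $\psi(x(s))=\psi_1x(s)+\psi_0$ throughout.

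On the $q$-quadratic lattice $x(s)=c_1q^{-s}+c_2q^s$ I would substitute into~(\ref{sigma}) and set $X=q^s$; just as in the derivation of~(\ref{e38a}) this yields $\sigma(x(s))=P(X)/X^2$ with $\deg P\le 4$. If $P$ has a root $\xi\neq 0$, apply Lemma~\ref{Prop4} with $q^\eta=\xi$: then $P_n(x(s))=\sum_{k=0}^n d_k\,w_k(x(s),\eta)$ with $\lambda_n=-\gamma_n\gamma_{n-1}\phi_2-\gamma_n\alpha_{n-1}\psi_1$ and $d_{k+1}/d_k$ given by~(\ref{e39b}). Iterating that first-order recurrence produces a closed product of $q$-shifted factorials for $d_k$, and inserting the representation of $w_k$ for this lattice from Remark~\ref{Prop5} collapses $P_n$ to a ${}_4\phi_3$ --- or to a ${}_3\phi_2$, ${}_2\phi_1$, $\dots$ when $\deg P<4$, i.e.\ when one or more of the roots of $P$ have been sent to $\infty$. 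Matching the parameters, where the rescaling $q^s\mapsto u^{\mp 1/2}q^s$ with $u=c_2/c_1$ is what produces the factors $u^{1/2}a,\dots$, identifies $P_n$ with the Askey--Wilson polynomial displayed in item~(1) of the statement, and its degenerations with the continuous dual $q$-Hahn, Al-Salam--Chihara, continuous big $q$-Hermite and continuous $q$-Hermite polynomials (this reproduces, on the canonical lattice $x=\cos\theta$, the case list following Theorem~\ref{th2}). The only configuration Lemma~\ref{Prop4} does not reach is $P(X)=CX^m$, equivalently $\phi(x)=\phi_2x^2+\phi_0$ and $\psi(x)=\psi_1x$, where $\sigma$ has no zero; there one invokes Lemma~\ref{Prop6} instead and recovers the continuous $q$-Hermite polynomials, again a limit of Askey--Wilson. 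Conversely, each family in the list solves~(\ref{e1.10.1}) because it is, by construction, the unique solution generated by~(\ref{e39b}) (or by Lemma~\ref{Prop6}) for the corresponding $\phi$, $\psi$, $\lambda_n$; this gives the ``if'' direction.

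On the quadratic lattice $x(s)=c_4s^2+c_5s$ the same steps apply. Here $\nabla x_1(s)=2c_4s+c_5$, so $\sigma(x(s))$ in~(\ref{sigma}) is a polynomial in $s$ of degree at most four and hence always has a zero $s=\eta$; Lemma~\ref{Prop4} then applies in every case with the representation $w_k(x(s),\eta)=(-c_4)^k\big(s+\tfrac{c_5}{c_4}+\eta\big)_k(-s+\eta)_k$ for this lattice given in Remark~\ref{Prop5}. Iterating~(\ref{e39b}) with the $q=1$ values $\alpha_k=1$, $\gamma_k=k$ collapses $P_n$ to a ${}_4F_3$, which one matches --- with the shift $u=c_5/c_4$ playing the role of the $q$-rescaling above --- to the Wilson polynomial $W_n\big({-}\big(s+\tfrac u2\big)^2;a+\tfrac u2,b+\tfrac u2,c+\tfrac u2,d+\tfrac u2\big)$ in~(\ref{e48}), or, when the degree of $\sigma$ in $s$ drops, to the continuous dual Hahn ${}_3F_2$ in~(\ref{e49}). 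The converse is again immediate from Lemma~\ref{Prop4}.

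I expect the main obstacle to be neither the telescoping of~(\ref{e39b}) nor the appeal to the lemmas, but the parameter bookkeeping combined with the exhaustiveness of the degeneration analysis: one has to run the factorization $P(X)=C\prod(X-\xi_i)$ (resp.\ the factorization of $\sigma$ in $s$) through every admissible multiplicity pattern --- including all the ways roots may coincide with $0$ or escape to $\infty$, and the sub-cases where a denominator in~(\ref{e39b}) vanishes --- and check that this produces exactly the Askey--Wilson / Wilson tableau and nothing outside it, with the eigenvalue $\lambda_n=-\gamma_n\gamma_{n-1}\phi_2-\gamma_n\alpha_{n-1}\psi_1$ coinciding in each case with the known eigenvalue of the named family, so that the identification is genuine and not merely formal. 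The last point to verify is that the exceptional symmetric case is completely covered by Lemma~\ref{Prop6} and contributes nothing new.
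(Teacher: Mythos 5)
Your proposal is correct and follows essentially the same route as the paper's own proof: the authors likewise compute $\sigma$ from~(\ref{sigma}) as a polynomial $P(X)$ of degree at most four in $X=q^s$ (respectively $X=s$), factor it, recover $\phi$, $\psi$ from the roots, and expand the solutions via Lemma~\ref{Prop4} together with the $w_k$ representations of Remark~\ref{Prop5} (using Lemma~\ref{Prop6} only in the rootless symmetric case on the $q$-quadratic lattice), identifying Askey--Wilson, Wilson and continuous dual Hahn polynomials and their degenerations, and noting on the quadratic lattice that $\deg P\geq 3$ since $\psi_1\neq 0$. The case bookkeeping you flag as the main burden is exactly what the paper carries out in the itemized scheme following Theorem~\ref{th2} and in the two-case analysis of~(\ref{e50}).
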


\begin{proof} For the proof of Theorem \ref{th2a}(1), follow the scheme described for Theorem \ref{th2} to obtain the result. For the proof of Theorem~\ref{th2a}(2):
\begin{enumerate}\itemsep=0pt
\item Take $\phi(x(s))=\phi_2x(s)^2+\phi_1x(s)+\phi_0$, $\psi(x(s)=\psi_1x(x(s))+\psi_0$ and $x(s)=c_4s^2+c_5s$ in $\sigma(x(s))$, with $X=s$, to obtain the polynomial
\begin{gather}
P(X) = {X}^{4}\phi_2{c_{{4}}}^{2}+ \big( 2 u\phi_2{c_{{4}}}^{2}-\psi_1{c_{{4}}}^{2} \big) {X}^{3}+ \big( {u}^{2}\phi_2{c_{{4}}}^{2}+\phi_{{1}}c_{{4}}-3/2 u\psi_1{c_{{4}}}^{2} \big) {X}^{2}\nonumber\\
\hphantom{P(X) =}{} + \big( u\phi_1c_{{4}}-\psi_0c_{{4}}-1/2 {u}^{2}\psi_1{c_{{4}}}^{2} \big) X+\phi_{{0}}-1/2 u\psi_0c_{{4}}, \qquad u=\frac{c_5}{ c_4}.\label{e50}
\end{gather}
\item Suppose P is of degree 4, that is $\phi_2\neq 0$:
\begin{itemize}\itemsep=0pt
\item Write
\begin{gather*}P(X)=C(X-a)(X-b)(X-c)(X-d), \qquad a,b,c,d\in \mathbb{C},\end{gather*} expand it and identify the coefficients of $X^i$, $i=0,1,2,3,4$ with those of $P$ in (\ref{e50}) to obtain a system of five equations;
\item Solve the system with unknowns $\phi_2$, $\phi_1$, $\phi_0$, $\psi_1$ and $\psi_0$ to obtain the corresponding polynomial coefficients of (\ref{e1.10.1})
\begin{gather}
\phi(x(s))=\frac{x(s)^2}{ c_4^2}\nonumber\\
\hphantom{\phi(x(s))=}{} +{\frac { {u}( 4 {u}+3a+3b+3c+3d)+2(ab+ac+ad+bc+bd+cd) }{2c_{{4}}}}x(s)\nonumber\\
\hphantom{\phi(x(s))=}{} + abcd+\frac{uabc+{u}^{2}bc+ubcd+uabd+uacd+{u}^{3}b+{u}^{3}c}{ 2\nonumber}\\
\hphantom{\phi(x(s))=}{} +\frac{{u}^{3}d+{u}^{3}a+{u}^{4}+{u}^{2}bd+{u}^{2}cd+{u}^{2}ac+{u}^{2}ab+{u}^{2}ad}{ 2},\label{e35a}\\
\psi(x(s)) = {\frac {b+c+d+a+2 u}{{c_{{4}}}^{2}}}x(s)\nonumber\\
\hphantom{\psi(x(s)) =}{}+\frac{abc+ubc+bcd+abd+acd+{u}^{2}b+{u}^{2}c+{u}^{2}d}{ c_4}\nonumber\\
\hphantom{\psi(x(s)) =}{}+\frac{{u}^{2}a+{u}^{3}+ubd+ucd+uac+uab+uad}{ c_4}.\label{e35b}
\end{gather}
\item Use Lemma \ref{Prop4} with $\eta=a$ and take into account the fact that $w_k(x(s),a)=(-c_4)^k(a+s)_k(a+u-s)_k$ on $x(s)=c_4s^2+c_5s$, (see Remark~(\ref{Prop5})), to obtain
\begin{gather*}
\sum_{k=0}^{n}\frac{(-n)_k(a+b+c+d+2u+n-1)_k}{ (a+b+u)_k(a+c+u)_k(a+d+u)_kk!}\left(\frac{-1}{ c_4}\right)^kw_k(x(s),a)\\
\qquad{} = {}_4 F_3\left(\begin{matrix} -n,a+b+c+d+2u+n-1,a-s,a+u+s\\ a+b+u,a+c+u,a+d+u
\end{matrix}
; 1 \right),\end{gather*}
where $u=\frac{c_5}{ c_4}$.
\end{itemize}
\item If $P$ is of degree 3, that is $\phi_2=0$,
\begin{gather*}
P(X)= -\psi_1{c_{{4}}}^{2} {X}^{3}+ \left(\phi_{{1}}c_{{4}}-\frac{3u\psi_1{c_{{4}}}^{2}}{2} \right) {X}^{2}\\
\hphantom{P(X)=}{} + \left( u\phi_1c_{{4}}-\phi_0c_{{4}}-\frac{{u}^{2}\psi_1{c_{{4}}}^{2}}{ 2} \right) X+\phi_{{0}}-\frac{u\psi_0c_{{4}}}{2},
\end{gather*}
write \begin{gather*}P(X)=C(X-a)(X-b)(X-c), \qquad a, b, c\in \mathbb{C},\end{gather*}
and use an algorithm analogous to the one described above to obtain the following polynomial coefficients of (\ref{e1.10.1})
\begin{gather*}
\phi(x(s))=-{\frac {( 3 u+2 a+2 b+2 c)}{2c_{{4}}}}x(s)-abc-\frac{u\big({u}^{2}+{u}a+{u}b+{u}c+ab+ac+bc\big)}{ 2},\\
\psi(x(s))=-\frac{x(s)}{ c_4^2}-{\frac {\big( {u}^{2}+ua+ub+uc+ab+ac+bc \big) }{c_{{4}}}},
\end{gather*}
and (\ref{e49}) as the corresponding polynomial system. Since $\psi_1\neq 0$, $P$ can not be of degree less than~3.\hfill \qed
\end{enumerate}\renewcommand{\qed}{}
\end{proof}

\begin{Remark}\quad
\begin{enumerate}\itemsep=0pt
\item Taking $c_4\rightarrow 1$, $c_5\rightarrow 0$ and $s\rightarrow {\rm i}s$, with ${\rm i}^2=-1$, (\ref{e48}) reads as
\begin{gather*}\frac{W_n\big(s^2;a,b,c,d\big)}{ (a+b)_n(a+c)_n(a+d)_n}=\frac{W_n(-x({\rm i}s); a,b,c,d)}{ (a+b)_n(a+c)_n(a+d)_n},\end{gather*}
where $W_n\big(s^2,a,b,c,d\big)$ is the Wilson polynomial \cite[equation~(9.1.1)]{KSL} and $x(z)=z^2$.
\item If $c_4=1$, $c_5=\gamma+\delta+1$, $a=0$, $b=\alpha-\gamma-\delta$, $c=\beta-\gamma$ and $d=-\delta$, the polynomial in~(\ref{e48}) is the Racah polynomial \cite[equation~(9.2.1)]{KSL}.
\item Taking $c_4\rightarrow 1$, $c_5\rightarrow 0$ and $s\rightarrow {\rm i}s$, with ${\rm i}^2=-1$, (\ref{e49}) reads as
\begin{gather*}
\frac{S_n\big(s^2;a,b,c\big)}{ (a+b)_n(a+c)_n}=\frac{S_n(-x({\rm i}s);a,b,c)}{ (a+b)_n(a+c)_n},
\end{gather*}
where $S_n\big(s^2;a,b,c\big)$ is the continuous dual Hahn polynomial \cite[equation~(9.3.1)]{KSL} and $x(z)=z^2$.
\end{enumerate}
\end{Remark}

\begin{Corollary}\label{co1}\quad\samepage
\begin{enumerate}\itemsep=0pt
\item[$(1)$] Wilson polynomials satisfy the Sturm--Liouville type equation~\eqref{e1.10.1} with
\begin{gather*}
\phi(x(z))=x(z)^2+(ab+ac+ad+cd+cb+bd)x(z)+abcd, \\
\psi(x(z))=(a+b+c+d)x(z)+abc+abd+acd+bcd, \\
\lambda_n=-n(a+b+c+d+n-1).
\end{gather*}
\item[$(2)$] Continuous dual Hahn polynomials satisfy the Sturm--Liouville type equation~\eqref{e1.10.1} with
\begin{gather*}
\phi(x(z))=(a+b+c)x(z)+abc, \qquad \psi(x(z))=x(z)+ab+ac+bc, \qquad \lambda_n=-n.
\end{gather*}
In both cases, $x(z)=z^2$ $(z={\rm i}s$, ${\rm i}^2=-1)$.
\end{enumerate}
\end{Corollary}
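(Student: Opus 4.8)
The plan is to read off Corollary \ref{co1} from Theorem \ref{th2a}(2) by specialising to the lattice $x(z)=z^2$, that is, to $c_4=1$, $c_5=0$ and hence $u=c_5/c_4=0$, matching the usual parametrisation of Wilson and continuous dual Hahn polynomials through $z={\rm i}s$ as in the Remark that follows Theorem \ref{th2a}.

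For part~(1) I would start from the degree-four branch of the proof of Theorem \ref{th2a}(2): substituting $u=0$ and $c_4=1$ into (\ref{e35a})--(\ref{e35b}) annihilates every $u$-dependent term and leaves exactly $\phi(x)=x^2+(ab+ac+ad+bc+bd+cd)x+abcd$ and $\psi(x)=(a+b+c+d)x+abc+abd+acd+bcd$. For the eigenvalue I would use $\lambda_n=-\gamma_n\gamma_{n-1}\phi_2-\gamma_n\alpha_{n-1}\psi_1$ from Lemma \ref{Prop4} together with the $q=1$ values $\alpha_n=1$, $\gamma_n=n$ from Section \ref{2} and $\phi_2=1$, $\psi_1=a+b+c+d$, giving $\lambda_n=-n(n-1)-n(a+b+c+d)=-n(a+b+c+d+n-1)$. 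Finally, item~1 of the Remark following Theorem \ref{th2a} says that the solution (\ref{e48}) at $u=0$ is, up to the normalising constant, the Wilson polynomial $W_n(x(z);a,b,c,d)$ with $x(z)=z^2$, $z={\rm i}s$.

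For part~(2) I would repeat this with the degree-three branch: putting $u=0$, $c_4=1$ in the degree-three coefficient formulas there gives $\phi(x)=-(a+b+c)x-abc$, $\psi(x)=-x-(ab+ac+bc)$ and, via Lemma \ref{Prop4} with $\phi_2=0$, $\psi_1=-1$, $\lambda_n=-\gamma_n\alpha_{n-1}\psi_1=n$. Since (\ref{e1.10.1}) is homogeneous, multiplying through by $-1$ turns these into the stated $\phi(x)=(a+b+c)x+abc$, $\psi(x)=x+ab+ac+bc$, $\lambda_n=-n$, and item~3 of the same Remark identifies the solution (\ref{e49}) at $u=0$ with the continuous dual Hahn polynomial $S_n(x(z);a,b,c)$, $x(z)=z^2$, $z={\rm i}s$.

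I do not expect a genuine obstacle: the substantial work — the explicit coefficients (\ref{e35a})--(\ref{e35b}) and their degree-three counterparts, the ${}_4F_3$ and ${}_3F_2$ evaluations, and the dictionary relating $\mathbb{D}_{x}$ on $x(s)=s^2$ to the Wilson operator set up in Section \ref{2} — is already in place, so the corollary reduces to checking that these data collapse, for $u=0$ and $c_4=1$, to the formulas stated. The only things that need a little care are the immaterial overall sign in part~(2) and the bookkeeping of the substitution $z={\rm i}s$ that recasts the solutions (\ref{e48}) and (\ref{e49}) at $u=0$ in the customary $W_n$, $S_n$ normalisation.
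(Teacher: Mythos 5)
Your proposal matches the paper's own proof: the paper likewise obtains part (1) by setting $c_4\to 1$, $c_5\to 0$ (hence $u=0$) and $s\to z={\rm i}s$ in (\ref{e35a})--(\ref{e35b}) and reading off $\lambda_n$ from the eigenvalue formula $\lambda=-\gamma_n\gamma_{n-1}\phi_2-\gamma_n\alpha_{n-1}\psi_1$ of Lemma \ref{Prop4} with the $q=1$ data $\gamma_n=n$, $\alpha_{n-1}=1$, and treats part (2) "in a similar way" via the degree-three branch. Your computations (including the harmless overall sign flip in part (2), which the paper leaves implicit) are correct, so this is essentially the same argument.
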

\begin{proof}$(1)$ Take $c_4\rightarrow 1$, $c_5\rightarrow 0$ and $s\rightarrow z$, with $z={\rm i}s$, ${\rm i}^2=-1$ in (\ref{e35a}) and (\ref{e35b}) to obtain polynomial coefficients of (\ref{e1.10.1}), $x(z)=z^2$, then use (\ref{e39}) to get $\lambda_n=-n(a+b+c+d+n-1)$. $(2)$ is obtained in a similar way.
\end{proof}

\begin{Corollary}\label{cor1}The Sturm--Liouville type equation \eqref{e1.10.1} has a polynomial solution $P_n(x)$ of degree $n=1,2,3,\dots$ if and only if $P_n(x)$ is a multiple of a Wilson polynomial, continuous dual Hahn polynomial or Askey--Wilson polynomial $p_n(x;a,b,c,d\,|\,q)$ and subcases, including limiting cases as one or more parameters $a$, $b$, $c$, $d$, tend to $\infty$.
\end{Corollary}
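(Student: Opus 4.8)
The plan is to read the statement off Theorem~\ref{th2a} and Corollary~\ref{co1}, which between them already contain everything needed; the one genuine step is to bring the general lattice \eqref{e60} into one of the two canonical forms occurring in Theorem~\ref{th2a}. For this, observe that if $x(s)$ is replaced by an affine image $\mu x(s)+\nu$ then $\mathbb{D}_{x}$ is merely rescaled by $1/\mu$ and $\mathbb{S}_{x}$ is left unchanged, so a degree-$n$ polynomial solution of \eqref{e1.10.1} on $x(s)$ corresponds to a degree-$n$ polynomial solution of an equation of the same type (with $\phi$, $\psi$ replaced by polynomials of the same degrees) on the affinely changed lattice; a translation $s\mapsto s+\tau$ likewise preserves the type of \eqref{e60} and the form of \eqref{e1.10.1}. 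Choosing $\nu$ to cancel the constant term of \eqref{e60}, we may therefore assume $x(s)=c_1q^{-s}+c_2q^s$ with $c_1\neq 0$ when $q\neq 1$, and $x(s)=c_4s^2+c_5s$ with $c_4\neq 0$ when $q=1$, which are exactly the hypotheses of Theorem~\ref{th2a}(1) and (2).

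Granting that reduction, the ``only if'' direction is immediate. On $x(s)=c_1q^{-s}+c_2q^s$, Theorem~\ref{th2a}(1) identifies $P_n$, up to a constant, with the ${}_4\phi_3$ displayed there, i.e.\ with an Askey--Wilson polynomial whose parameters are controlled by $u=c_2/c_1$ and by the roots of the polynomial $P(X)$ of \eqref{e38a}; the enumeration in the scheme following Theorem~\ref{th2} shows that sending these roots to $0$ (equivalently, one or more of $a,b,c,d$ to $\infty$) yields precisely the continuous dual $q$-Hahn, Al-Salam--Chihara, continuous big $q$-Hermite and continuous $q$-Hermite polynomials, so no family outside ``Askey--Wilson and its subcases and $\infty$-limits'' appears. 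On $x(s)=c_4s^2+c_5s$, Theorem~\ref{th2a}(2) produces \eqref{e48} or \eqref{e49}, and after $c_4\to1$, $c_5\to0$, $s\mapsto{\rm i}s$ the Remark following Theorem~\ref{th2a} together with Corollary~\ref{co1} rewrite these, with $z={\rm i}s$, as Wilson polynomials $W_n(z^2;\dots)$ and continuous dual Hahn polynomials $S_n(z^2;\dots)$. For the converse it suffices to exhibit, for each listed family, an instance of \eqref{e1.10.1} it satisfies: for Wilson and continuous dual Hahn polynomials this is exactly Corollary~\ref{co1}(1) and (2), while for Askey--Wilson polynomials and each of their subcases and $\infty$-limits the explicit pairs $\phi(x(s))$, $\psi(x(s))$ written out in the scheme after Theorem~\ref{th2} are, by construction, coefficients of \eqref{qsturm}, and \eqref{qsturm} on the lattice $x(s)=\frac{q^{-s}+q^{s}}{2}$ is \eqref{e1.10.1}.

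I expect the only delicate part to be bookkeeping rather than a new idea: one must check that the reduction of the first paragraph genuinely brings \emph{every} admissible lattice of \eqref{e60} — including the degenerate instance $c_1c_2=0$ inside $q\neq1$ — within the scope of Theorem~\ref{th2a}, and that the list of subcases produced from $P(X)$ through Lemma~\ref{Prop4}, Lemma~\ref{Prop6} and the table in Remark~\ref{Prop5} is exhaustive and coincides with the families ``Wilson, continuous dual Hahn, Askey--Wilson and their limits'', with no omission and no spurious family. Making that completeness check airtight, not the classification itself, is where the remaining care lies.
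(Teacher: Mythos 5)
Your proposal is correct and follows essentially the same route as the paper, which states Corollary~\ref{cor1} without a separate proof precisely because it is an immediate consequence of Theorem~\ref{th2a} (together with Corollary~\ref{co1} and the classification scheme built on Lemmas~\ref{Prop4} and~\ref{Prop6} and Remark~\ref{Prop5}); your added affine-reduction of the lattice to the canonical forms is a reasonable way of making that implicit step explicit. The only caveat is the minor parenthetical slip identifying roots of $P(X)$ tending to $0$ with parameters tending to $\infty$ (zero roots give the subcases with vanishing parameters, while the degree of $P(X)$ dropping gives the $\infty$-limits), which does not affect the conclusion.
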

\begin{Remark} In~\cite{niki1991}, Nikiforov et al.\ classified orthogonal polynomials of the quadratic and $q$-quadratic variable by solving the Pearson type equation \cite[equation~(3.2.9)]{niki1991} and obtained Racah polynomials, dual Hahn polynomials and their $q$-analogs. Our approach, which is based on the Sturm--Liouville type equation (see Theorem \ref{th2a}) and uses the polynomial $P$ (appearing in the proof of Theorems~\ref{th2} and~\ref{th2a}), Lemmas \ref{Prop4} and~\ref{Prop6}, and Remark~\ref{Prop5}, leads, in addition to Racah polynomials and dual Hahn polynomials, to Askey--Wilson polynomials, subcases and limiting cases. This completes the result in Nikiforov et al.~\cite{niki1991}, generalizes \cite[Theorem~3.1]{Mourad2003} and provides explicit solutions to \cite[equation~(1.3)]{VinetZhedanov}. To the best of our knowledge, our treatment of the generalized Bochner Theorem is new.
\end{Remark}

\section[Structure relations of orthogonal polynomials of the quadratic and $q$-quadratic variable]{Structure relations of orthogonal polynomials of the quadratic\\ and $\boldsymbol{q}$-quadratic variable}\label{3}
In this section, for a family $\{P_n(x(s))\}_{n\geq 0}$ of classical orthogonal polynomials of the quadratic and $q$-quadratic variable, we prove equivalence between the Sturm--Liouville type equation~(\ref{e1.10.1}), the orthogonality of the second derivatives $\big\{\mathbb{D}_{x}^2P_n\big\}_{n\geq 2}$ and a first structure relation that genera\-li\-zes~(\ref{structureAW}). This will enable us to derive, from Theorem~\ref{th2a}, the solution to the Askey problem related to~(\ref{vinet}) and a second structure relation for classical orthogonal polynomials of the quadratic and $q$-quadratic variable.

We begin by generalizing \cite[Lemma 3.1]{MK2017}
\begin{Lemma}\label{Prop1} Let $\{P_n\}_{n=0}^{\infty}$ be a sequence of monic orthogonal polynomials. If there exist two sequen\-ces $(a'_n)$ and $(b'_n)$ of numbers such that
\begin{gather*}
\frac{1}{ \gamma_{n+1}}\mathbb{D}_{x} P_{n+1}(x)=(x-a'_n)\frac{1}{ \gamma_{n}}\mathbb{D}_{x} P_{n}(x)-\frac{b'_n}{ \gamma_{n-1}}\mathbb{D}_{x} P_{n-1}(x)+c_n, \qquad c_n \in \mathbb{C},
\end{gather*}
then, there exist two polynomials $\phi(x)$ and $\psi(x)$ of degree at most two and of degree one respectively, and $\lambda_n$ a constant such that $P_n(x)$ satisfies the divided-difference equation
\begin{gather}\label{e18a}
\phi(x) \mathbb{D}_{x}^2P_n(x)+\psi(x)\mathbb{S}_{x}\mathbb{D}_{x} P_n(x)+\lambda_n P_n(x)=0,\qquad n\geq 5.
\end{gather}
\end{Lemma}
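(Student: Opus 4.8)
The plan is to convert the hypothesis --- which says that $\bigl\{\tfrac{1}{\gamma_n}\mathbb{D}_{x}P_n\bigr\}$ obeys a three-term recurrence up to an additive constant --- into a structure relation for $P_n$ itself, and then to turn that structure relation into \eqref{e18a} by applying $\mathbb{D}_{x}$ and $\mathbb{S}_{x}$ once more and using the product and commutation rules \eqref{e16}--\eqref{e16b}.

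First I would apply $\mathbb{D}_{x}$ to \eqref{e63}; by \eqref{e16} together with $\mathbb{D}_{x}x=1$ and $\mathbb{S}_{x}x=\alpha x+\beta$ this gives
\begin{gather*}
\mathbb{D}_{x}P_{n+1}(x)=\mathbb{S}_{x}P_n(x)+(\alpha x+\beta-a_n)\mathbb{D}_{x}P_n(x)-b_n\mathbb{D}_{x}P_{n-1}(x).
\end{gather*}
Eliminating $\mathbb{D}_{x}P_{n+1}$ between this and the hypothesis (after clearing the $\gamma_j$'s) yields the intermediate structure relation
\begin{gather*}
\mathbb{S}_{x}P_n(x)=\ell_n(x)\,\mathbb{D}_{x}P_n(x)+\mu_n\,\mathbb{D}_{x}P_{n-1}(x)+\nu_n ,
\end{gather*}
where $\deg\ell_n\le 1$ with leading coefficient $\tfrac{\gamma_{n+1}}{\gamma_n}-\alpha$, and $\mu_n=b_n-\tfrac{\gamma_{n+1}}{\gamma_{n-1}}b'_n$, $\nu_n=\gamma_{n+1}c_n$ are constants.

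Next I would differentiate this relation with $\mathbb{D}_{x}$ and average it with $\mathbb{S}_{x}$. Using $\mathbb{D}_{x}\mathbb{S}_{x}=\alpha\mathbb{S}_{x}\mathbb{D}_{x}+U_1\mathbb{D}_{x}^2$ from \eqref{e21a} and the lattice identity $\alpha-\bigl(\tfrac{\gamma_{n+1}}{\gamma_n}-\alpha\bigr)=\tfrac{\gamma_{n-1}}{\gamma_n}$ (equivalent to $\gamma_{n+1}=2\alpha\gamma_n-\gamma_{n-1}$), one expresses $\mathbb{D}_{x}^2P_{n-1}$ through $\mathbb{S}_{x}\mathbb{D}_{x}P_n$ and $\mathbb{D}_{x}^2P_n$ (coefficients of degree $\le 1$); using $\mathbb{S}_{x}^2=U_1\mathbb{S}_{x}\mathbb{D}_{x}+\alpha U_2\mathbb{D}_{x}^2+\mathbb{I}$ from \eqref{e16b} and \eqref{e16a}, one likewise expresses $\mathbb{S}_{x}\mathbb{D}_{x}P_{n-1}$ through $P_n$, $\mathbb{S}_{x}\mathbb{D}_{x}P_n$, $\mathbb{D}_{x}^2P_n$ and a constant (this needs $\mu_n\neq 0$). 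Finally I would apply $\mathbb{D}_{x}^2$ and $\mathbb{S}_{x}\mathbb{D}_{x}$ to \eqref{e63} to write $\mathbb{D}_{x}^2P_{n+1}$ and $\mathbb{S}_{x}\mathbb{D}_{x}P_{n+1}$ in terms of the second-order quantities at levels $n$ and $n-1$, substitute these into the $\mathbb{D}_{x}$-image of the intermediate structure relation taken at level $n+1$, and eliminate all level-$(n-1)$ terms via the two relations just obtained. Only $\mathbb{D}_{x}^2P_n$, $\mathbb{S}_{x}\mathbb{D}_{x}P_n$, $P_n$ survive, and collecting them gives
\begin{gather*}
\Phi_n(x)\,\mathbb{D}_{x}^2P_n(x)+\Psi_n(x)\,\mathbb{S}_{x}\mathbb{D}_{x}P_n(x)+\lambda_n P_n(x)=0 ,
\end{gather*}
with $\deg\Phi_n\le 2$, $\deg\Psi_n\le 1$ and $\lambda_n=\gamma_nb'_n/(\gamma_{n-1}\mu_n)\neq 0$ (using $b'_n\neq 0$); one must also track the additive constants generated by the $\nu_j$'s and check they cancel.

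The genuine obstacle is that $\Phi_n$, $\Psi_n$ so far depend on $n$, whereas \eqref{e18a} requires a \emph{single} pair $\phi$, $\psi$. To remove the $n$-dependence I would write the displayed equation for two consecutive indices $n$, $n+1$, use $\mathbb{D}_{x}^2$ and $\mathbb{S}_{x}\mathbb{D}_{x}$ applied to \eqref{e63} together with the index-lowering relations above to reduce the $(n+1)$-equation to another relation of the same shape between $\mathbb{D}_{x}^2P_n$, $\mathbb{S}_{x}\mathbb{D}_{x}P_n$, $P_n$, and then observe that for $n$ large the space of such relations (with the stated degree bounds) is one-dimensional, since $\{\mathbb{D}_{x}^2P_n,\,x\mathbb{D}_{x}^2P_n,\,x^2\mathbb{D}_{x}^2P_n,\,\mathbb{S}_{x}\mathbb{D}_{x}P_n,\,x\mathbb{S}_{x}\mathbb{D}_{x}P_n,\,P_n\}$ carries exactly one linear dependency. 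Proportionality of the two relations, unwound with the explicit forms of $\alpha_n$ and $\gamma_n$, then forces $\Phi_n$, $\Psi_n$ (suitably normalised) to be independent of $n$; setting $\phi$, $\psi$ equal to these common values and relabelling $\lambda_n$ gives \eqref{e18a}. The bound $n\ge 5$ is exactly what keeps these degree/dimension counts valid and the denominators $\gamma_{n-1}$, $\mu_n$, $\alpha-\bigl(\tfrac{\gamma_{n+1}}{\gamma_n}-\alpha\bigr)$ nonzero. Everything up to the last paragraph is mechanical bookkeeping with \eqref{e16}--\eqref{e16b}; the crux is this rigidity argument.
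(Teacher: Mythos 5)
Your opening moves are sound and are in the spirit of the argument the paper relies on (note the paper does not reprove this lemma: its proof simply imports \cite[Lemma~3.1]{MK2017}, replacing $\mathcal{D}_q,\mathcal{S}_q$ by $\mathbb{D}_{x},\mathbb{S}_{x}$). Applying $\mathbb{D}_{x}$ to \eqref{e63} and eliminating $\mathbb{D}_{x}P_{n+1}$ against the hypothesis does give $\mathbb{S}_{x}P_n=\ell_n\mathbb{D}_{x}P_n+\mu_n\mathbb{D}_{x}P_{n-1}+\nu_n$ with the coefficients you state, and acting on this with $\mathbb{D}_{x}$ and $\mathbb{S}_{x}$ via \eqref{e16}--\eqref{e16b} is the right way to lower the index.

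However, the proposal has genuine gaps exactly where the content lies. First, both index-lowering steps require dividing by $\mu_n=b_n-\frac{\gamma_{n+1}}{\gamma_{n-1}}b'_n$, and nothing in the hypothesis guarantees $\mu_n\neq0$; attributing this to $n\ge 5$ is incorrect, since $\mu_n$ depends on the data $b_n,b'_n$, not on $n$ being large. Likewise $b'_n\neq0$ is not assumed, so the claim $\lambda_n=\gamma_nb'_n/(\gamma_{n-1}\mu_n)\neq0$ is unsupported (the lemma does not need $\lambda_n\neq0$, but a complete proof must either treat the case $\mu_n=0$ separately --- it is in fact easier, since the intermediate relation then closes at level $n$ directly --- or rule it out). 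Second, you acknowledge but do not verify that the additive constants generated by $\nu_n=\gamma_{n+1}c_n$ cancel; as written, your elimination yields an inhomogeneous equation, whereas \eqref{e18a} is homogeneous. Third, and most importantly, the step you yourself call the crux --- removing the $n$-dependence of $\Phi_n,\Psi_n$ --- rests on the bare assertion that $\{\mathbb{D}_{x}^2P_n,\,x\mathbb{D}_{x}^2P_n,\,x^2\mathbb{D}_{x}^2P_n,\,\mathbb{S}_{x}\mathbb{D}_{x}P_n,\,x\mathbb{S}_{x}\mathbb{D}_{x}P_n,\,P_n\}$ carries exactly one linear dependency. Existence of a dependency is what you constructed; uniqueness is not a dimension count (six polynomials in a space of dimension $n+1\ge6$ are generically independent), and a second independent relation is only excluded if one shows that $\mathbb{D}_{x}^2P_n$ and $\mathbb{S}_{x}\mathbb{D}_{x}P_n$ cannot share a common factor of degree $n-3$, which needs an actual argument (for instance via orthogonality of the $\mathbb{D}_{x}$-derivatives, or by computing $\Phi_n,\Psi_n$ explicitly in terms of $a_n,b_n,a'_n,b'_n,\alpha_n,\gamma_n$ and checking that the suitably normalised coefficients are constant in $n$), not an ``observe that''. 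Until these points are supplied, the passage from your $n$-dependent relation to a single pair $\phi,\psi$ --- which is precisely what \eqref{e18a} asserts --- is missing.
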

\begin{proof} The proof follows exactly the same argument of the proof of \cite[Lemma~3.1]{MK2017}, replacing the averaging operator $\mathcal{S}_q$ by $\mathbb{S}_{x}$, the Askey--Wilson operator $\mathcal{D}_q$ by $\mathbb{D}_{x}$ and where the polynomials~$U_1$ and~$U_2$ are those appearing in~(\ref{e21a}) and~(\ref{e16a}).
\end{proof}

\begin{Theorem}\label{th1} Let $\{P_n\}_{n=0}^{\infty}$ be a sequence of polynomials orthogonal with respect to a positive weight function $w(x)$. The following properties are equivalent:
\begin{enumerate}\itemsep=0pt
\item[$a)$] There exists a polynomial $\pi(x)$ of degree at most $4$ and sequences of five elements \linebreak $\{a_{n,n+k}\}_{n\geq 2}$, $-2\leq k\leq 2$, $a_{n,n-2}\neq 0$ such that $P_n$ satisfies the structure relation
\begin{gather}\label{e9}
\pi(x)\mathbb{D}_{x}^2P_n(x)=\sum_{k=-2}^{2}a_{n,n+k}P_{n+k}(x).
\end{gather}
\item[$b)$] There exists a polynomial $\pi(x)$ of degree at most four such that $\big\{\mathbb{D}_{x}^2P_n\big\}_{n=2}^{\infty}$ is orthogonal with respect to $\pi(x) w(x)$.
\item[$c)$] There exist two polynomials $\phi(x)$ and $\psi(x)$ of degree at most two and of degree one respectively, and a constant $\lambda_n$ such that
\begin{gather}\label{e10d}
\phi(x) \mathbb{D}_{x}^2P_n(x)+\psi(x)\mathbb{S}_{x}\mathbb{D}_{x} P_n(x)+\lambda_n P_n(x)=0, \qquad n\geq 5.
\end{gather}
\end{enumerate}
\end{Theorem}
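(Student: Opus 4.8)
The plan is to establish the cycle of implications $a) \Rightarrow b) \Rightarrow c) \Rightarrow a)$, following the template of \cite[Theorem~3.2]{MK2017} but adapted to the general divided-difference setting via the product rules \eqref{e16}--\eqref{e16b} and the structural identities \eqref{e40}--\eqref{e43}. For $a) \Rightarrow b)$: since $\{P_n\}$ is orthogonal, $\{\mathbb{D}_x^2 P_n\}_{n\ge 2}$ is a sequence of polynomials with $\deg(\mathbb{D}_x^2 P_n)=n-2$ (using $\mathbb{D}_x w_k = \gamma_k w_{k-1}$ iterated twice, and $\gamma_k\neq 0$), hence forms a basis. To prove orthogonality with respect to $\pi(x)w(x)$, I would show $\int_S \pi(x)\,\mathbb{D}_x^2 P_n(x)\,Q(x)\,w(x)\,dx = 0$ for every polynomial $Q$ of degree $\le n-3$; expanding $\pi\,\mathbb{D}_x^2 P_n$ in the basis $\{P_m\}$ via \eqref{e9} shows it is a combination of $P_{n-2},\dots,P_{n+2}$, all of which are $\mu$-orthogonal to $Q$ since $\deg Q\le n-3$. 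The positivity/nondegeneracy of the new functional (so that it genuinely defines an orthogonal sequence and not merely a quasi-orthogonal one) is where $a_{n,n-2}\neq 0$ is used, guaranteeing $\deg(\mathbb{D}_x^2P_n)$ is exactly $n-2$ and the leading coefficients are nonzero.

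For $b) \Rightarrow c)$: this is the analogue of the classical Al-Salam--Chihara argument. Assuming $\{\mathbb{D}_x^2 P_n\}_{n\ge 2}$ is orthogonal with respect to $\pi(x)w(x)$, it satisfies a three-term recurrence relation; rewriting it in terms of the $\mathbb{D}_x P_n$'s (after applying $\mathbb{D}_x$ once to the three-term recurrence \eqref{e63} and using \eqref{e16}) puts us in a position to invoke Lemma~\ref{Prop1}, since the hypothesis of Lemma~\ref{Prop1} is precisely a recurrence of the stated shape for $\frac{1}{\gamma_n}\mathbb{D}_x P_n$. Lemma~\ref{Prop1} then delivers \eqref{e10d} for $n\ge 5$. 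Alternatively, and perhaps more cleanly, one differentiates the recurrence for $\{\mathbb{D}_x^2P_n\}$ and the recurrence for $\{P_n\}$ and compares, producing a recurrence for $\{\mathbb{D}_x P_n/\gamma_n\}$ of the form required by Lemma~\ref{Prop1}.

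For $c) \Rightarrow a)$: starting from \eqref{e10d}, I would apply $\mathbb{D}_x$ to both sides and use the commutation relations \eqref{e16}, \eqref{e16a}, \eqref{e21a}, \eqref{e16b} to express $\mathbb{D}_x$ of each term. Since $\phi$ has degree $\le 2$ and $\psi$ degree $\le 1$, and since $\mathbb{D}_x$ and $\mathbb{S}_x$ lower or preserve degree appropriately, one obtains an identity expressing $\psi\,\mathbb{S}_x\mathbb{D}_x P_n$ (or rather a suitable combination) in terms of $P_n$ and its images under $\mathbb{D}_x,\mathbb{S}_x$. Multiplying \eqref{e10d} through by an appropriate factor of $U_1$ or $U_2$ (degree $\le 2$) and using \eqref{e16b} to eliminate $\mathbb{S}_x^2$, then using \eqref{e16a} to eliminate the $\mathbb{S}_x$-applied terms, one should be able to isolate $\pi(x)\mathbb{D}_x^2 P_n$ with $\pi$ a polynomial of degree at most $4$ built from $\phi$, $\psi$, $U_1$, $U_2$, expressed purely in terms of $P_n$ and lower/higher shifts. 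Finally, since $P_n$ has degree $n$ and the left side has degree $\le n+2$, expanding in the $\mu$-orthogonal basis $\{P_m\}$ and using the three-term recurrence to control which $P_m$ can appear forces the expansion to terminate at $P_{n-2},\dots,P_{n+2}$; that $a_{n,n-2}\neq 0$ follows because $\lambda_n\neq 0$ for $n\ge 1$ together with a leading-coefficient computation.

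The main obstacle I anticipate is the $c)\Rightarrow a)$ step: the bookkeeping needed to convert a second-order divided-difference equation into a structure relation, via repeated use of \eqref{e16}--\eqref{e16b}, is delicate because $\mathbb{D}_x$ and $\mathbb{S}_x$ do not commute and the operator $U_2\mathbb{D}_x^2$ can raise degree. One must carefully track degrees to ensure the resulting $\pi$ stays of degree $\le 4$ and that no $P_m$ with $|m-n|>2$ survives; the $q$-quadratic case requires the full generality of the $\alpha,\beta,U_1,U_2$ formalism rather than the simpler Askey--Wilson specialization used in \cite{MK2017}, so the computations, while parallel in spirit, need the general structural identities recalled in Section~\ref{2}.
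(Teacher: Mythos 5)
Your overall architecture (the cycle $a)\Rightarrow b)\Rightarrow c)\Rightarrow a)$, the expansion argument for $a)\Rightarrow b)$, and the reduction of $b)\Rightarrow c)$ to Lemma~\ref{Prop1} via a recurrence for $\tfrac{1}{\gamma_n}\mathbb{D}_{x}P_n$) coincides with the paper's. The genuine gap is in your $c)\Rightarrow a)$ step. Equation \eqref{e10d} is a \emph{single} linear relation among the three quantities $\mathbb{D}_{x}^2P_n$, $\mathbb{S}_{x}\mathbb{D}_{x}P_n$ and $P_n$; multiplying it by $U_1$ or $U_2$ and invoking \eqref{e16a}, \eqref{e16b} does not produce a second independent relation among these same quantities — applied to $P_n$ those identities only introduce new unknowns such as $\mathbb{S}_{x}^2P_n$, whose expansion in the basis $\{P_m\}$ you do not control. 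So the plan to ``isolate $\pi\,\mathbb{D}_{x}^2P_n$'' from \eqref{e10d} alone, with the three-term recurrence entering only at the final expansion stage, cannot close. The paper's elimination instead brings the recurrence in at the start: applying $\mathbb{D}_{x}$ and $\mathbb{S}_{x}$ to \eqref{e63} gives \eqref{e31aa}, \eqref{e32a} and an $\mathbb{S}_{x}$-analogue relating level $n+1$ to levels $n$, $n-1$; multiplying these by $\phi$ and $\psi$, using the Sturm--Liouville equation at the three indices $n-1$, $n$, $n+1$, and then once more the relation $\psi\mathbb{S}_{x}\mathbb{D}_{x}P_n=-\phi\mathbb{D}_{x}^2P_n-\lambda_nP_n$, yields explicitly $2\alpha\big(\phi^2-U_2\psi^2\big)\mathbb{D}_{x}^2P_n$ as a combination of $P_{n+1}$, $P_n$, $P_{n-1}$ with coefficients of degree at most two, after which the recurrence spreads this over $P_{n-2},\dots,P_{n+2}$ and gives a closed formula for $a_{n,n-2}$ (your ``$\lambda_n\neq0$ plus a leading-coefficient computation'' is not by itself the argument; the paper checks nonvanishing from that explicit formula using $b_n>0$).

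A second, smaller omission: $c)$ is only assumed for $n\geq 5$, while $a)$ asserts the relation for all $n\geq 2$, and the elimination at level $n$ uses the equation at $n\pm 1$. The paper bridges this by first invoking the classification (Theorem~\ref{th2a}, Corollary~\ref{cor1}): the solutions for $n\geq 5$ identify the orthogonality measure as a classical one, whence $P_1,\dots,P_4$ coincide (up to constants) with the corresponding classical polynomials and therefore also satisfy \eqref{e10d}. Your sketch does not address this range issue, and without it the structure relation is only obtained for large $n$. (Your $a)\Rightarrow b)$ step is fine, though note that $\deg\mathbb{D}_{x}^2P_n=n-2$ comes from $\gamma_k\neq0$, not from $a_{n,n-2}\neq0$; the latter is what makes the diagonal integral $\int\pi w\,(\mathbb{D}_{x}^2P_n)^2$ nonzero.)
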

\begin{proof}The proof is organized as follows:

Step 1. $(a)\Rightarrow (b)\Rightarrow (a)$ which is equivalent to $(a)\Leftrightarrow (b)$.

Step 2. $(b)\Rightarrow (c)\Rightarrow (a)$ which, taking into account Step 1, is equivalent to $(b)\Leftrightarrow (c)$.

Step 1: We assume that $(a)$ is satisfied and we prove $(b)$. Let $m\geq 2$ and $n\geq 2$ be two integers, and assume that $m\leq n$. From $(a)$, there exists a polynomial $\pi$ of degree at most four and there exist sequences of five elements $\{a_{n,n+j}\}_n$, $j=-2, -1, 0, 1, 2$ such that
\begin{gather}\label{e30}
\pi\mathbb{D}_{x}^2P_n=\sum_{j=-2}^{2}a_{n,n+j}P_{n+j}, \qquad \text{with} \quad a_{n,n-2}\neq 0.
\end{gather}
Since $m\leq n$, $m-2\leq n-2\leq n+j\leq n+2$ (for $j=-2, -1, 0, 1, 2$). So, multiplying both sides of (\ref{e30}) by $W\mathbb{D}_{x}^2P_m$, integrating on $(a, b)$ and then taking into account the fact that $(P_n)$ is orthogonal on the interval $(a, b)$ with respect to the weight function $W$, we obtain
\begin{gather*}
\int_{a}^{b}\mathbb{D}_{x}^2P_m(x)\mathbb{D}_{x}^2P_n(x)\pi (x) W(x){\rm d}x \
 \begin{cases}
=0& \text{if} \ m<n,\\
\neq 0& \text{if} \ m=n.
\end{cases}
\end{gather*}
If $n<m$, we substitute in (\ref{e30}), $n$ by $m$ and by a similar way, we obtain
\begin{gather*}\int_{a}^{b}\pi (x) W(x)\mathbb{D}_{x}^2P_n(x)\mathbb{D}_{x}^2P_m(x){\rm d}x=0.\end{gather*}

Now we assume $(b)$ and we prove $(a)$. Since $\pi (x)\mathbb{D}_{x}^2P_n$ is a polynomial of degree less or equal to $n+2$, $\pi(x)\mathbb{D}_{x}^2P_n$ can be expanded in the orthogonal basis $\{P_j\}_{j=0}^{\infty}$ as
\begin{gather*}\pi (x)\mathbb{D}_{x}^2P_n=\sum_{j=0}^{n+2}a_{n,j}P_j,\end{gather*}
where $a_{n,j}$, $j=0,\dots,n+2$ is given by
\begin{gather*} a_{n,j}\int_{a}^{b}W(x)\big(\mathbb{D}_{x}^2P_n(x)\big)^2{\rm d}x=\int_{a}^{b}\pi(x) W(x)P_j(x)\mathbb{D}_{x}^2P_n(x){\rm d}x.\end{gather*}
Since $\mathbb{D}_{x}^2P_n(x)$ is of degree $n-2$ we deduce from the hypothesis that $a_{n,j}=0$ for $j=0,\dots,n-2$ and $a_{n,n-2}\neq 0$.

Step 2: We suppose $(b)$ and we prove $(c)$. Firstly, we prove that $\{P_n\}_{n=0}^{\infty}$ satisfies an equation of type~(\ref{e18a}). Since $\frac{x}{ \gamma_n}\mathbb{D}_{x} P_n$ is a monic polynomial of degree~$n$, it can be expanded as
\begin{gather}\label{e31a}
x\frac{1}{ \gamma_n}\mathbb{D}_{x} P_n=\frac{1}{ \gamma_{n+1}}\mathbb{D}_{x} P_{n+1}+\sum_{j=1}^{n}\frac{e_{n,j}}{ \gamma_j}\mathbb{D}_{x} P_j, \qquad e_{n,j}\in \mathbb{R}.
\end{gather}
Applying $\mathbb{D}_{x}$ to both sides, we obtain
\begin{gather}
(\alpha x+\beta)\frac{1}{ \gamma_n}\mathbb{D}_{x}^2 P_n+\frac{1}{ \gamma_n}\mathbb{S}_{x}\mathbb{D}_{x} P_n=\frac{1}{ \gamma_{n+1}}\mathbb{D}_{x}^2 P_{n+1}+\sum_{j=2}^{n}\frac{e_{n,j}}{ \gamma_j}\mathbb{D}_{x}^2P_j.\label{e31}
\end{gather}
Apply $\mathbb{D}_{x}$ to both sides of the three-term recurrence relation (\ref{e63}), use the product rule (\ref{e16}) and the fact $\mathbb{S}_{x} (x(s))=\alpha x(s)+\beta$ to obtain
\begin{gather}\label{e31aa}
\mathbb{D}_{x} P_{n+1}=(\alpha x+\beta-a_n)\mathbb{D}_{x} P_n+\mathbb{S}_{x} P_n-b_n\mathbb{D}_{x} P_{n-1}.
\end{gather}
Apply $\mathbb{D}_{x}$ to both sides and use (\ref{e21a}) as well as the expression of $U_1$, $U_1(x)=\big(\alpha^2-1\big)x+\beta (\alpha +1)$ to obtain
\begin{gather}\label{e32a}
\mathbb{D}_{x}^2P_{n+1}=\big[\big(2\alpha^2-1\big)x+2\beta (\alpha+1 )-a_n]\mathbb{D}_{x}^2P_n+2\alpha\mathbb{S}_{x}\mathbb{D}_{x}P_n-b_n\mathbb{D}_{x}^2P_{n-1}.
\end{gather}
By using this relation to eliminate $\mathbb{S}_{x}\mathbb{D}_{x} P_n$ in (\ref{e31}) we obtain
\begin{gather}
\frac{1}{ \gamma_n}x\mathbb{D}_{x}^2 P_n-\frac{\left(2\beta+a_n\right)}{ \gamma_n}\mathbb{D}_{x}^2P_n+\frac{b_n}{ \gamma_n}\mathbb{D}_{x}^2P_{n-1}\nonumber\\
\qquad{} =\left(\frac{2\alpha}{ \gamma_{n+1}}-\frac{1}{ \gamma_n}\right)\mathbb{D}_{x}^2 P_{n+1}+\sum_{j=2}^{n}\frac{2\alpha e_{n,j}}{ \gamma_j}\mathbb{D}_{x}^2P_j.\label{e32}
\end{gather}
Since $\big\{\frac{\mathbb{D}_{x}^2 P_n}{ \gamma_{n}\gamma_{n-1}}\big\}$ is orthogonal, there exist $a''_n$ and $b''_n$ such that
\begin{gather}\label{e34}
x\frac{\mathbb{D}_{x}^2P_n}{ \gamma_n}=\frac{\gamma_{n-1}}{ \gamma_{n+1}\gamma_{n}}\mathbb{D}_{x}^2P_{n+1}+a''_n\mathbb{D}_{x}^2P_n+b''_n\mathbb{D}_{x}^2P_{n-1}.
\end{gather}
So, using the relation $\gamma_{n+1}-2\alpha\gamma_n+\gamma_{n-1}=0$, obtained by direct computation, (\ref{e32}) becomes
\begin{gather*}\left(a''_n-\frac{2\beta+a_n}{ \gamma_n}\right)\mathbb{D}_{x}^2P_n+\left(b''_n+\frac{b_n}{ \gamma_n}\right)\mathbb{D}_{x}^2P_{n-1}=\sum_{j=2}^{n}\frac{2\alpha e_{n,j}}{ \gamma_j}\mathbb{D}_{x}^2P_j.\end{gather*}
Therefore,
$e_{n,j}=0$ (for $j=2,3,.\dots,n-2$) and (\ref{e31a}) reads as
\begin{gather*}
\frac{x}{ \gamma_n}\mathbb{D}_{x} P_n=\frac{1}{ \gamma_{n+1}}\mathbb{D}_{x} P_{n+1}+\frac{e_{n,n}}{ \gamma_n}\mathbb{D}_{x} P_n+\frac{e_{n,n-1}}{ \gamma_{n-1}}\mathbb{D}_{x} P_{n-1}+e_{n,1}.
\end{gather*}
Then, from Lemma \ref{Prop1}, there exist two polynomials $\phi$ of degree at most 2 and $\psi$ of degree 1, and a constant $\lambda_n$ such that $P_n$ satisfies
\begin{gather*}
\phi\mathbb{D}_{x}^2P_n+\psi \mathbb{S}_{x}\mathbb{D}_{x} P_n+ \lambda_nP_n=0,\qquad n\geq 5. \end{gather*}

Let us prove that $(c)\Rightarrow (a)$. Note that $P_n$, $n=1,2,3,4$, satisfies (\ref{e10d}). In fact, it follows from the algorithm described in the proof of Theorem \ref{th2a}, that, for ${n\geq 5}$, $P_n$ is up to a multiplicative factor equal to Askey--Wilson polynomial and Wilson polynomial, and subcases, including limiting cases, denoted by $p_n$. Since $\{p_n\}_{n=0}^{\infty}$ is orthogonal (cf.~\cite{MK2017, KSL}) both families are orthogonal with respect to the same measure. Therefore $P_n$, $n=1,2,3,4$, is up to a multiplicative factor equal to $p_n$ which satisfies (\ref{e10d}) by Theorem~\ref{th2a}).

Apply $\mathbb{S}_{x}$ to both sides of (\ref{e31aa}) and use (\ref{e16}) as well as (\ref{e16b}) to obtain
\begin{gather*}\mathbb{S}_{x}\mathbb{D}_{x} P_{n+1}=\big(\alpha^2 x+U_1(x)+\beta(\alpha+1)-a_n\big)\mathbb{S}_{x}\mathbb{D}_{x} P_n+2\alpha U_2\mathbb{D}_{x}^2P_n+P_n-b_n\mathbb{S}_{x}\mathbb{D}_{x} P_{n-1}.\end{gather*} Adding $\psi$ times the previous equation and $\phi$ times (\ref{e32a}), and then using the assumption~$(c)$, we obtain
\begin{gather*}\lambda_{n+1}P_{n+1}=\lambda_n\big(\alpha^2x+U_1(x)+\beta (\alpha+1 )-a_n\big)P_n\\
\hphantom{\lambda_{n+1}P_{n+1}=}{} -2\alpha\big(\phi\mathbb{S}_{x}\mathbb{D}_{x}P_n+U_2\psi\mathbb{D}_{x}^2P_n\big)
-\psi P_n-b_n\lambda_{n-1}P_{n-1}.\end{gather*}
Multiplying the latter equation by $\psi$ and using the relation $\psi\mathbb{S}_{x}\mathbb{D}_{x} P_n=-\phi\mathbb{D}_{x}^2P_n-\lambda_nP_n$, obtained from the assumption, and then substituting $U_1$ by $\big(\alpha^2-1\big)x+\beta (\alpha+1)$, we obtain
\begin{gather*} 2\alpha\big(\phi^2-U_2\psi^2\big)\mathbb{D}_{x}^2P_n =\lambda_{n+1}\psi P_{n+1}\\
\qquad{} +\big[\psi^2-2\alpha\lambda_n\phi-\lambda_n\big(\big(2\alpha^2-1\big)x
+2\beta(\alpha+1)-a_n\big)\psi\big]P_n+\lambda_{n-1}b_n\psi P_{n-1}.\end{gather*}
Taking $\phi (x)=\phi_2x^2+\phi_1x+\phi_0$ and $\psi (x)=\psi_1x+\psi_0$ and using the three-term recurrence rela\-tion~(\ref{e34}), we transform the above equation into
\begin{gather}\label{e34a}
\big(\phi^2-U_2\psi^2\big)\mathbb{D}_{x}^2P_n=\sum_{j=-2}^{2}a_{n,n+j}P_{n+j},
\end{gather}
where
\begin{gather*} a_{n,n-2}=\frac{\big[\psi_1^2-\lambda_n\big(2\alpha\phi_2+\big(2\alpha^2-1\big)\psi_1\big)\big]b_{n}b_{n-1}+\psi_1b_{n-1}b_n\lambda_{n-1}}{ 2\alpha}.
\end{gather*} $a_{n,n-2}\neq 0$, for $b_n>0$, $n=0,1,\dots$, and $\psi_1$ does not depend on $n$.
\end{proof}

\begin{Corollary}A family of monic orthogonal polynomials $\{P_n\}_{n=0}^{\infty}$ satisfies the relation~\eqref{e9} if and only if $P_n(x)$ is a multiple of the Wilson polynomial, continuous dual Hahn polynomial or Askey--Wilson polynomial $p_n(x;a,b,c,d\,|\,q)$ and subcases, including limiting cases as one or more parameters $a$, $b$, $c$, $d$ tend to infinity.
\end{Corollary}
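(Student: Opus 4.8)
The plan is to chain together the equivalences already established. Theorem~\ref{th1} shows that for a monic orthogonal polynomial sequence $\{P_n\}$, the structure relation~\eqref{e9} (property $(a)$) is equivalent to the Sturm--Liouville type equation~\eqref{e10d} holding for $n\geq 5$ (property $(c)$). So the first step is simply to invoke Theorem~\ref{th1}: $\{P_n\}$ satisfies~\eqref{e9} if and only if there exist $\phi$ of degree at most two, $\psi$ of degree one, and constants $\lambda_n$ such that $\phi\,\mathbb{D}_{x}^2P_n+\psi\,\mathbb{S}_{x}\mathbb{D}_{x}P_n+\lambda_nP_n=0$ for all $n\geq 5$.

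The second step is to upgrade ``$n\geq 5$'' to ``all $n\geq 1$'' and then apply the classification. This is exactly the argument run inside the proof of $(c)\Rightarrow(a)$ in Theorem~\ref{th1}: for $n\geq 5$, the algorithm in the proof of Theorem~\ref{th2a} (equivalently Corollary~\ref{cor1}) identifies $P_n$, up to a multiplicative constant, as a Wilson, continuous dual Hahn, or Askey--Wilson polynomial, or one of their subcases/limiting cases; call this family $\{p_n\}$. Since $\{p_n\}_{n\geq 0}$ is itself an orthogonal polynomial sequence, and $P_n=\text{const}\cdot p_n$ for all $n\geq 5$, the two monic sequences share the same recurrence coefficients $a_n$, $b_n$ for $n\geq 5$, hence the same orthogonality measure, hence $P_n=\text{const}\cdot p_n$ for \emph{every} $n\geq 0$. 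In particular $\{P_n\}$ coincides (up to normalization) with one of the listed families. Conversely, each of those families satisfies~\eqref{e10d} by Corollary~\ref{cor1} (or Corollary~\ref{co1} and Theorem~\ref{th2a}), and therefore satisfies~\eqref{e9} by the $(c)\Rightarrow(a)$ direction of Theorem~\ref{th1}; this closes the ``if'' direction.

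Concretely I would write: ``By Theorem~\ref{th1}, $\{P_n\}$ satisfies~\eqref{e9} if and only if it satisfies the Sturm--Liouville type equation~\eqref{e10d} for $n\geq 5$. As shown in the proof of the implication $(c)\Rightarrow(a)$ in Theorem~\ref{th1}, this forces $P_n$ to be a constant multiple of $p_n$ for $n\geq 5$, where $\{p_n\}$ is one of the families in Corollary~\ref{cor1}; comparing recurrence coefficients then gives $P_n=\text{const}\cdot p_n$ for all $n$. Conversely, by Corollary~\ref{cor1} every such $p_n$ satisfies~\eqref{e10d}, and hence~\eqref{e9} by Theorem~\ref{th1}.'' The corollary thus needs only a few lines.

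The main obstacle—and the only place requiring a little care—is the bootstrap from $n\geq 5$ to all $n$. One must be sure that two monic OPS agreeing in all terms of degree $\geq 5$ actually agree everywhere; the clean way is to note that the three-term recurrence coefficients $a_n,b_n$ for $n\geq 5$ are determined by the high-degree polynomials, these coefficients then pin down the orthogonality functional up to normalization, and a monic OPS is uniquely determined by its functional. Alternatively one can observe that $P_n$ for $n=1,2,3,4$ is obtained by running the recurrence~\eqref{e63} backward from $P_5,P_6$, which again forces agreement with $p_n$. This is precisely the step already carried out in Theorem~\ref{th1}'s proof, so here it suffices to cite it; no genuinely new difficulty arises.
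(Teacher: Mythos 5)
Your proposal is correct and follows essentially the same route as the paper: the paper's proof is precisely the combination of Theorem~\ref{th1} with Corollary~\ref{cor1} (plus the remark that the limiting cases of Askey--Wilson polynomials remain orthogonal families), and the bootstrap from $n\geq 5$ to all $n$ that you spell out is the step already absorbed into the proof of $(c)\Rightarrow(a)$ in Theorem~\ref{th1}. Your write-up just makes these ingredients explicit.
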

\begin{proof} The proof is deduced from Theorem \ref{th1}, Corollary~\ref{cor1} and the fact that limiting cases of Askey--Wilson polynomials as one or more parameters $a$, $b$, $c$, $d$ tend to $\infty$ are orthogonal polynomials families (cf.~\cite[Remark~3.2]{Mourad2003}), see also~\cite{MK2017}.
\end{proof}

Next, we turn our attention to the second structure relation.

\begin{Proposition}\label{Prop2}Let $\{P_n\}$ be a sequence of polynomials orthogonal with respect to a weight function $W$ on $(a, b)$. If $\big\{\mathbb{D}_{x}^2P_n\big\}$ is orthogonal with respect to the weight function~$\pi W$ where~$\pi$ is a polynomial of degree at most~$4$, then there exist sequences of five elements $\{b_{n,n+j}\}$, $j=-2, -1, 0, 1, 2$ such that
\begin{gather*}
P_n=\sum_{j=-2}^{2}b_{n,n+j}\mathbb{D}_{x}^2P_{n+j},\qquad b_{n,n+2}\neq0,\qquad n=2,3,\dots.
\end{gather*}
\end{Proposition}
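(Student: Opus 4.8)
The plan is to expand $P_n$ in the basis formed by the second divided differences $\{\mathbb{D}_{x}^2 P_m\}_{m\geq 2}$ and then to kill all but five of the coefficients by a degree count played off against the two orthogonality relations at hand.

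First I would record that $\mathbb{D}_{x}$ lowers the degree of a nonconstant polynomial by exactly one: in the monic basis $w_k(x(s),\eta)$ of \eqref{e39a} one has $\mathbb{D}_{x} w_k=\gamma_k w_{k-1}\big(x(s),\eta+\frac{1}{2}\big)$ by \eqref{e40}, with $\gamma_k\neq 0$ for $k\geq 1$. Hence $\deg\big(\mathbb{D}_{x}^2 P_m\big)=m-2$ for $m\geq 2$, so $\{\mathbb{D}_{x}^2 P_m\}_{m\geq 2}$ is a basis of the space of polynomials and we may write $P_n=\sum_{m=2}^{n+2}b_{n,m}\,\mathbb{D}_{x}^2 P_m$.

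Next, since $\{\mathbb{D}_{x}^2 P_m\}_{m\geq 2}$ is orthogonal with respect to $\pi W$, pairing this expansion with $\mathbb{D}_{x}^2 P_l$ in the inner product induced by $\pi W$ gives
\begin{gather*}
b_{n,l}\int_{a}^{b}\big(\mathbb{D}_{x}^2 P_l(x)\big)^2\pi(x)W(x)\,{\rm d}x=\int_{a}^{b}P_n(x)\big(\pi(x)\,\mathbb{D}_{x}^2 P_l(x)\big)W(x)\,{\rm d}x.
\end{gather*}
Now $\pi\,\mathbb{D}_{x}^2 P_l$ is a polynomial of degree at most $4+(l-2)=l+2$; if $l\leq n-3$ this is at most $n-1$, so the right-hand side vanishes by orthogonality of $\{P_n\}$ with respect to $W$, forcing $b_{n,l}=0$. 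Thus only $l=n-2,n-1,n,n+1,n+2$ survive, and $P_n=\sum_{j=-2}^{2}b_{n,n+j}\,\mathbb{D}_{x}^2 P_{n+j}$. For the non-degeneracy $b_{n,n+2}\neq 0$ I would compare leading coefficients: applying \eqref{e40} twice shows that the leading coefficient of $\mathbb{D}_{x}^2 P_{n+2}$ is $\gamma_{n+2}\gamma_{n+1}$ times that of $P_{n+2}$, while $\deg\big(\mathbb{D}_{x}^2 P_{n+j}\big)=n+j-2<n$ for $j<2$; equating the degree-$n$ parts in the relation then expresses $b_{n,n+2}$ as the ratio of the leading coefficient of $P_n$ to $\gamma_{n+2}\gamma_{n+1}$ times that of $P_{n+2}$, which is nonzero since $\gamma_m\neq0$ for $m\geq1$ and each $P_m$ has nonzero leading coefficient.

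I do not expect a genuine obstacle: once the orthogonality of $\{\mathbb{D}_{x}^2 P_m\}_{m\geq2}$ is available the statement is degree bookkeeping. The only points needing care are the identity $\deg\big(\mathbb{D}_{x}^2 P_m\big)=m-2$ and the leading-coefficient computation, both resting on \eqref{e40}. Alternatively one can avoid the degree bound on $\pi\,\mathbb{D}_{x}^2 P_l$ by noting that the hypothesis is precisely condition~$(b)$ of Theorem~\ref{th1} with this $\pi$, so condition~$(a)$ holds and $\pi\,\mathbb{D}_{x}^2 P_l=\sum_{|k|\leq2}a_{l,l+k}P_{l+k}$; orthogonality of $\{P_n\}$ with respect to $W$ then kills the same terms.
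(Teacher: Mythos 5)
Your argument is correct and is essentially the paper's proof: the paper simply invokes the proof of Proposition~3.8 of \cite{MK2017} with $\mathcal{D}_q$ replaced by $\mathbb{D}_{x}$, which is exactly your expansion of $P_n$ in the orthogonal family $\big\{\mathbb{D}_{x}^2P_m\big\}$, the vanishing of the coefficients $b_{n,l}$ for $l\leq n-3$ via the degree bound $\deg\big(\pi\,\mathbb{D}_{x}^2P_l\big)\leq l+2$ and orthogonality of $\{P_n\}$ with respect to $W$, and the leading-coefficient check based on \eqref{e40} giving $b_{n,n+2}\neq0$.
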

\begin{proof} Replace $\mathcal{D}_q$ by $\mathbb{D}_{x}$ in the proof of \cite[Proposition 3.8]{MK2017}.
\end{proof}
\begin{Corollary}\label{co2} Wilson polynomials, continuous dual Hahn polynomials, Askey--Wilson polynomials, special cases and limiting cases as one or more parameters $a$, $b$, $c$, $d$ tend to $\infty$, satisfy the structure relation
\begin{gather}\label{secondSR}
P_n(x)=\sum_{j=-2}^{2}b_{n,n+j}\mathbb{D}_{x}^2P_{n+j}(x).
\end{gather}
\end{Corollary}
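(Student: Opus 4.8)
The plan is to obtain \eqref{secondSR} as an immediate consequence of the three results already established in this section — Corollary~\ref{co1} (together with the scheme behind Theorem~\ref{th2a} and Corollary~\ref{cor1}), Theorem~\ref{th1}, and Proposition~\ref{Prop2} — so that in effect nothing genuinely new has to be proved: the task is to verify that the hypotheses of Proposition~\ref{Prop2} are met by each of the families listed.

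First I would record that every family in the statement is a sequence of orthogonal polynomials with respect to a positive weight function $w(x)$ on some interval $(a,b)$: for the Wilson, continuous dual Hahn and Askey--Wilson polynomials this is classical, and for the limiting cases obtained as one or more of $a$, $b$, $c$, $d$ tend to $\infty$ it is exactly what is invoked in the proof of the preceding corollary, via \cite[Remark~3.2]{Mourad2003} and \cite{MK2017}. Next, Corollary~\ref{co1} exhibits explicit polynomials $\phi$ (degree at most two) and $\psi$ (degree one) and a constant $\lambda_n$ for which the Wilson and continuous dual Hahn polynomials solve the Sturm--Liouville type equation \eqref{e1.10.1}, and by the algorithm in the proof of Theorem~\ref{th2a} together with Corollary~\ref{cor1} the same is true for the Askey--Wilson polynomials and all their subcases and limiting cases. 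Hence each family satisfies property~$(c)$ of Theorem~\ref{th1}.

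Then I would apply Theorem~\ref{th1}: the implication $(c)\Rightarrow(b)$ yields a polynomial $\pi(x)$ of degree at most four — concretely $\pi=\phi^2-U_2\psi^2$, as in \eqref{e34a} — such that $\{\mathbb{D}_{x}^2P_n\}_{n\geq2}$ is orthogonal with respect to $\pi(x)w(x)$. With $\{P_n\}$ orthogonal with respect to $w$ and $\{\mathbb{D}_{x}^2P_n\}$ orthogonal with respect to $\pi w$, Proposition~\ref{Prop2} applies directly and produces the expansion $P_n=\sum_{j=-2}^{2}b_{n,n+j}\mathbb{D}_{x}^2P_{n+j}$ with $b_{n,n+2}\neq0$, which is precisely \eqref{secondSR}. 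There is essentially nothing to compute; the one point that needs care is the very first step, namely checking that each limiting case really does remain an orthogonal polynomial family with a positive weight on some interval, so that Theorem~\ref{th1} and Proposition~\ref{Prop2} are legitimately applicable. Once that is granted — and it was already granted in the preceding corollary — the remainder is a mechanical concatenation of the statements proved above.
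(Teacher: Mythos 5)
Your proposal is correct and follows essentially the same route as the paper: the paper's own proof also observes that these families are orthogonal and satisfy the Sturm--Liouville type equation \eqref{e1.10.1}, and then concatenates Theorem~\ref{th1} (property $(c)\Rightarrow(b)$) with Proposition~\ref{Prop2}. Your extra care in checking that the limiting cases remain orthogonal families is exactly the point already covered by the citation of \cite[Remark~3.2]{Mourad2003} in the preceding corollary, so nothing further is needed.
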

\begin{proof}Since those polynomials are orthogonal and they are the only solutions to (\ref{e1.10.1}), the result is obtained by using Theorem~\ref{th1} and Proposition~\ref{Prop2}.
\end{proof}

In the following proposition we show the connection of the structure relation (cf.~\cite[p.~118]{CSM}) given by~(\ref{macellan}) to~(\ref{secondSR}).
\begin{Proposition}The structure relation \eqref{macellan} is connected to our second structure rela\-tion~\eqref{secondSR} as follows:
 \begin{gather*}
\mathbb{S}_{x}\mathcal{M}P_n(x(s))=P_n(x(s))+
\big(2\alpha\big)^{-1}U_1\mathbb{D}_{x}^2P_{n+1}(x(s))+\big(2\alpha\big)^{-1}U_1b_n\mathbb{D}_{x}^2P_{n-1}(x(s))\\
\hphantom{\mathbb{S}_{x}\mathcal{M}P_n(x(s))=}{} +\big(2\alpha\big)^{-1}\big(U_1\big(a_n-\alpha^2x(s)-\beta(\alpha+1)-U_1\big)+2\alpha^2U_2\big)\mathbb{D}_{x}^2P_n(x(s)),\end{gather*}
where $a_n$ and $b_n$ are coefficients of the three-term recurrence relation~\eqref{e63}.
\end{Proposition}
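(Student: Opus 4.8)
The plan is to rewrite $\mathbb{S}_{x}\mathcal{M}P_n$ using only the operators $\mathbb{D}_{x}$ and $\mathbb{S}_{x}$, and then to collapse the outcome with the three-term recurrences of $\{P_n\}$ and of $\{\mathbb{D}_{x}^{2}P_n\}$. The starting point is that, acting on a function of the lattice variable, the operator $\mathcal{M}$ of \eqref{macellan} is nothing but the averaging operator $\mathbb{S}_{x}$: indeed $\mathcal{M}P_n(x(s))=\frac{1}{2}\big(P_n(x(s+\frac{1}{2}))+P_n(x(s-\frac{1}{2}))\big)=\mathbb{S}_{x}P_n(x(s))$, which is again a polynomial in $x(s)$ of degree $n$. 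Hence $\mathbb{S}_{x}\mathcal{M}P_n=\mathbb{S}_{x}^{2}P_n$, and the operator identity \eqref{e16b} gives at once
\begin{gather*}
\mathbb{S}_{x}\mathcal{M}P_n=\mathbb{S}_{x}^{2}P_n=P_n+U_1\,\mathbb{S}_{x}\mathbb{D}_{x}P_n+\alpha U_2\,\mathbb{D}_{x}^{2}P_n .
\end{gather*}

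The next step is to eliminate $\mathbb{S}_{x}\mathbb{D}_{x}P_n$. Solving \eqref{e32a}, itself obtained by applying $\mathbb{D}_{x}$ twice to the recurrence \eqref{e63}, for $2\alpha\,\mathbb{S}_{x}\mathbb{D}_{x}P_n$ expresses it through $\mathbb{D}_{x}^{2}P_{n+1}$, $\mathbb{D}_{x}^{2}P_n$ and $\mathbb{D}_{x}^{2}P_{n-1}$; substituting this into the display above produces an identity whose right-hand side involves $P_n$ together with these same three second divided differences, but now with polynomial (rather than constant) coefficients arising from the factor $U_1$ and from the linear bracket $(2\alpha^{2}-1)x+2\beta(\alpha+1)-a_n$ occurring in \eqref{e32a}. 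The remaining task is to bring those coefficients down to the prescribed degrees by means of the three-term recurrence \eqref{e34} for the orthogonal sequence $\{\mathbb{D}_{x}^{2}P_m/(\gamma_m\gamma_{m-1})\}$, using along the way the elementary identities $\gamma_{n+1}-2\alpha\gamma_n+\gamma_{n-1}=0$, $\mathbb{S}_{x}(x)=\alpha x+\beta$ and $\alpha^{2}x+\beta(\alpha+1)=x+U_1(x)$; after the cancellations the right-hand side assumes the asserted shape, and the coefficient of $\mathbb{D}_{x}^{2}P_{n-1}$ yields the value of $c_n$. An equivalent route, which leads to the same intermediate identity, is to apply $\mathbb{S}_{x}$ directly to \eqref{e31aa} and simplify with the product rules \eqref{e16} and \eqref{e16a}.

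The main obstacle is precisely this last collapsing step: one must verify that when the linear and quadratic $x$-multiples are re-expanded through \eqref{e34} the contributions that a priori carry $\mathbb{D}_{x}^{2}P_{n\pm2}$ cancel, and that what survives in front of $\mathbb{D}_{x}^{2}P_n$ assembles exactly into $\alpha^{-1}\big(b_n-\alpha^{2}x-\beta(\alpha+1)-U_1(x)+\alpha^{2}U_2(x)\big)$. This is a finite but intricate computation depending on the explicit relations among $\alpha$, $\beta$, $\gamma_n$ and the recurrence coefficients of $\{P_n\}$ and $\{\mathbb{D}_{x}^{2}P_n\}$; the bookkeeping, though routine, is best handled by tracking the coefficients of $\mathbb{D}_{x}^{2}P_{n+1}$, $\mathbb{D}_{x}^{2}P_n$ and $\mathbb{D}_{x}^{2}P_{n-1}$ one at a time.
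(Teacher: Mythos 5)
Your opening two steps are, in fact, the paper's entire proof: the authors also observe that on lattice functions $\mathcal{M}P_n=\mathbb{S}_{x}P_n$, hence $\mathbb{S}_{x}\mathcal{M}P_n=\mathbb{S}_{x}^2P_n$, expand this with \eqref{e16b}, and then eliminate $\mathbb{S}_{x}\mathbb{D}_{x}P_n$ by means of \eqref{e32a} --- and they stop there. No appeal to the three-term recurrence \eqref{e34} for $\big\{\mathbb{D}_{x}^2P_m\big\}$ is made or needed. The problem with your write-up is that the third step you append --- re-expanding the $x$-multiples through \eqref{e34}, checking that the $\mathbb{D}_{x}^2P_{n\pm2}$ contributions cancel, and asserting that the remaining coefficients ``assemble exactly'' into the displayed constants $\alpha^{-1}$ and $\alpha^{-1}c_n$ --- is precisely the part that would have to carry the statement as printed, and you do not perform it; declaring it routine bookkeeping leaves the claim unproved.

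Moreover, that deferred step cannot be completed in the form you describe. Carrying out the elimination of $\mathbb{S}_{x}\mathbb{D}_{x}P_n$ via \eqref{e32a} in the \eqref{e16b}-expansion gives, with $a_n$, $b_n$ the coefficients of \eqref{e63},
\begin{gather*}
\mathbb{S}_{x}\mathcal{M}P_n=P_n+\frac{U_1(x)}{2\alpha}\mathbb{D}_{x}^2P_{n+1}
+\Big[\alpha U_2(x)-\frac{U_1(x)}{2\alpha}\big(\big(2\alpha^2-1\big)x+2\beta(\alpha+1)-a_n\big)\Big]\mathbb{D}_{x}^2P_n
+\frac{b_nU_1(x)}{2\alpha}\mathbb{D}_{x}^2P_{n-1},
\end{gather*}
so the coefficients of $\mathbb{D}_{x}^2P_{n\pm1}$ are $U_1(x)/(2\alpha)$-multiples, genuinely linear in $x$ when $q\neq1$, not constants. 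They cannot be traded for the printed constants by any further use of \eqref{e34}: comparing $x^n$-coefficients, the left-hand side has $\alpha_n^2=1+\big(\alpha^2-1\big)\gamma_n\alpha_{n-1}+\alpha\big(\alpha^2-1\big)\gamma_n\gamma_{n-1}$, whereas the right-hand side as displayed in the Proposition has $1+\alpha\big(\alpha^2-1\big)\gamma_n\gamma_{n-1}$; these differ whenever $\alpha\neq1$, as one also sees concretely for the monic continuous $q$-Hermite polynomials with $n=2$. So the cancellations you promise would not occur: what your (and the paper's) elimination actually establishes is the identity above, with $U_1(x)/(2\alpha)$ where the statement prints $\alpha^{-1}$ and $b_nU_1(x)/(2\alpha)$ where it prints $\alpha^{-1}c_n$ (the displayed coefficients appear to be misprinted). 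The honest conclusion of your argument is therefore this $U_1/(2\alpha)$-form; asserting instead that bookkeeping will reproduce the printed constants is a gap that cannot be closed. (A small additional slip: $c_n$ is a given recurrence coefficient of \eqref{e63}, not something the computation ``yields''.)
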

\begin{proof}Observe that $\mathbb{S}_{x}\mathcal{M}P_n(x(s))=\mathbb{S}_{x}^2P_n(x(s))$, then take into account~(\ref{e16b}) to obtain
\begin{gather*}\mathbb{S}_{x}^2{P_n}(x(s))=P_n(x(s))+U_1(x(s))\mathbb{S}_{x}\mathbb{D}_{x} P_n(x(s))+\alpha U_2(x(s))\mathbb{D}_{x}^2P_n(x(s)).\end{gather*}
Then use (\ref{e32a}) to eliminate $\mathbb{S}_{x}\mathbb{D}_{x}$. \end{proof}

\begin{Remark}From (\ref{e34a}), $\big\{\mathbb{D}_{x}^2P_n\big\}_{n\geq 2}$ is orthogonal with respect to \begin{gather*}\big(\phi^2(x(s))-U_2(x(s))\psi^2(x(s))\big)W(x(s)).\end{gather*} So, there exists a constant $c>0$ such that
\begin{gather*}
\pi(x(s)) = c\big(\phi^2(x(s))-U_2(x(s))\psi^2(x(s))\big) \\
\hphantom{\pi(x(s))}{} = c\left(\phi(x(s))-\frac{\nabla x_1(s)}{ 2}\psi(x(s))\right)\left(\phi(x)+\frac{\nabla x_1(s)}{ 2}\psi(x(s))\right) \\
\hphantom{\pi(x(s))}{} \qquad \text{for} \quad U_2(x(s))=\left(\frac{\nabla x_1(s)}{ 2}\right)^2 \\
\hphantom{\pi(x(s))}{} = c\sigma(x(s))\tau(x(s)),
\end{gather*}
where $\sigma(x(s))$ and $\tau(x(s))$ are functions defined by
\begin{gather*}
\sigma(x(s)) = \phi(x(s))-\frac{\nabla x_1(s)}{ 2}\psi(x(s)),\qquad
\tau(x(s)) = \phi(x(s))+\frac{\nabla x_1(s)}{ 2}\psi(x(s)).\end{gather*}
So, without loss of generality, we can take
\begin{gather*}\pi(x(s))= \phi^2(x(s))-U_2(x(s))\psi^2(x(s)).\end{gather*}
Let us mention that the function $\sigma(x(s))$ is the one defined by~(\ref{sigma}) and also appearing in the proof of Theorem~\ref{th2} and that of Theorem~\ref{th2a}.
\end{Remark}

\section{Coefficients of the structure relations}\label{4}
\subsection{The Wilson polynomials}
\begin{Proposition} The first structure relation \eqref{e9} for monic Wilson polynomials $P_n\big(s^2;a,b,$ $c,d\big)$ is
\begin{gather*}
\big(s^2+a^2\big)\big(s^2+b^2\big)\big(s^2+c^2\big)\big(s^2+d^2\big)\frac{\delta^2P_n(s^2;a,b,c,d)}{ \delta^2s^2}=\sum_{j=-2}^{2}a_{n,n+j}P_{n+j}\big(s^2;a,b,c,d\big),
\end{gather*}
where
\begin{gather*}
a_{n,n+2}=n(n-1),\\
\frac{a_{n,n+1}}{ n(n-1)}= A_n(c,b,a,d+1) +A_{n-1}( b,a,c+1,d+1)\\
\hphantom{\frac{a_{n,n+1}}{ n(n-1)}=}{} +A_{n-2}(a,b+1,c+1,d+1) +A_{n+1}(d,b,c,a),\\
\frac{a_{n,n}}{ n(n-1)} = \big(A_n(c,b,a,d+1 ) +A_{n-1} (b,a,c +1,d+1 ) \\
\hphantom{\frac{a_{n,n}}{ n(n-1)} =}{} +A_{n-2}(a,b+1,c+1,d+1)\big)A_n(d,b,c,a ) + \big(A_{n-1} (b,a,c+1,d+1 ) \\
\hphantom{\frac{a_{n,n}}{ n(n-1)} =}{} +A_{n-2} (a,b+1,c+1,d+1)\big) A_{n-1} (c,b,a,d+1 )\\
\hphantom{\frac{a_{n,n}}{ n(n-1)} =}{} +A_{n-2} (a,b+1,c+1,d+1) A_{n-2} (b,a,c+1,d+1 ),\\
\frac{a_{n,n-1}}{ n(n-1)}=\big[ \big( A_{n-1} (b,a,c+1,d+1 ) +A_{n-2}(a,b+1,c+1,d+1 ) \big)A_{n-1} (c,b,a,d+1 )\\
\hphantom{\frac{a_{n,n-1}}{ n(n-1)}=}{} + A_{n-2} ( a,b+1,c+1,d+1 ) A_{n-2} (b,a,c+1,d+1 ) \big] A_{n-1} (d,b,c,a )\\
\hphantom{\frac{a_{n,n-1}}{ n(n-1)}=}{} + A_{n-2} (a,b+1,c+1,d+1 ) A_{n-2} (b,a,c+1,d+1 ) A_{n-2} (c,b,a,d+1 ),\\
\frac{a_{n,n-2}}{ n(n-1)} = A_{n-2} ( a,b+1,c+1,d+1 ) A_{n-2} ( b,a,c+1,d+1 )\\
\hphantom{\frac{a_{n,n-2}}{ n(n-1)} =}{}\times A_{n-2} ( c,b,a,d+1 ) A_{n-2} (d,b,c,a ),
\end{gather*}
and $A_n(a,b,c,d)$ is defined in Lemma~{\rm \ref{lemma1}}.
\end{Proposition}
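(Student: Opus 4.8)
The plan is to combine two facts. The first is that the second divided difference of a monic Wilson polynomial is the scalar $n(n-1)$ times a monic Wilson polynomial with all four parameters increased by $1$. The second is the single–parameter parameter–shift relation for monic Wilson polynomials supplied by Lemma~\ref{lemma1}, which is also where $A_n(a,b,c,d)$ is defined. Undoing, by four applications of the second fact, the parameter shift created by the first fact will produce the structure relation together with the explicit coefficients.

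\textbf{Step 1.} On the Wilson lattice $x(z)=z^2$, $z=\mathrm{i}s$, one has $\gamma_n=n$, and $\mathbb{D}_{x}$ agrees with $\delta/\delta s^2$ up to this substitution and a sign, as recorded in Section~\ref{2}. Since $\mathbb{D}_{x}$ sends a monic polynomial of degree $k$ to $\gamma_k$ times a monic polynomial of degree $k-1$ by~\eqref{e40}, while the backward shift relation $\frac{\delta W_n(x^2;a,b,c,d)}{\delta x^2}=-n(n+a+b+c+d-1)W_{n-1}\big(x^2;a+\frac{1}{2},b+\frac{1}{2},c+\frac{1}{2},d+\frac{1}{2}\big)$ recalled in the Introduction identifies that monic polynomial as $P_{n-1}$ with all parameters raised by $\frac{1}{2}$, I would first prove that $\mathbb{D}_{x}^2P_n$ equals $\gamma_n\gamma_{n-1}=n(n-1)$ times the monic Wilson polynomial with all four parameters increased by $1$:
\begin{gather*}
\mathbb{D}_{x}^2P_n\big(s^2;a,b,c,d\big)=n(n-1)\,P_{n-2}\big(s^2;a+1,b+1,c+1,d+1\big),
\end{gather*}
the eigenvalue factors occurring in the non–monic form cancelling against the ratio of the leading coefficients of the Wilson polynomials involved. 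This already explains the value $a_{n,n+2}=n(n-1)$.

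\textbf{Step 2.} By Corollary~\ref{co1}(1) and Theorem~\ref{th1} the monic Wilson polynomials satisfy the structure relation~\eqref{e9}, so only the coefficients remain to be found. I would multiply the identity of Step~1 by $(s^2+a^2)(s^2+b^2)(s^2+c^2)(s^2+d^2)$ and peel the four quadratic factors off one at a time, in the order $(s^2+a^2),(s^2+b^2),(s^2+c^2),(s^2+d^2)$, using the relation of Lemma~\ref{lemma1}, which has the shape $\big(s^2+p^2\big)P_m\big(s^2;p+1,\dots\big)=P_{m+1}\big(s^2;p,\dots\big)+A_m(p,\dots)\,P_m\big(s^2;p,\dots\big)$, so as to lower $a+1\mapsto a$, then $b+1\mapsto b$, then $c+1\mapsto c$, then $d+1\mapsto d$. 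Each of the four steps offers a binary choice — raise the degree (contributing a factor $1$) or keep it (contributing a factor $A_m$ with $m$ the current degree) — so the right–hand side becomes a sum of $2^4$ terms, to be collected by final degree $n+j$. The all–raise branch recovers $a_{n,n+2}=n(n-1)$; the all–keep branch gives $a_{n,n-2}/\big(n(n-1)\big)$ as the product of the four $A_{n-2}$–factors; and $a_{n,n+1}$, $a_{n,n}$, $a_{n,n-1}$ emerge as the sums over the four single–keep, the six double–keep, and the four triple–keep branch patterns, the subscript on each $A$ being the degree at which its keep–step was taken, hence running through $n-2,n-1,n,n+1$. As a consistency check, the polynomial $(s^2+a^2)(s^2+b^2)(s^2+c^2)(s^2+d^2)$ appearing on the left coincides with $\phi^2(x(s))-U_2(x(s))\psi^2(x(s))$ for the $\phi,\psi$ of Corollary~\ref{co1}(1) and $U_2(x(s))=x(s)$, in agreement with the $\pi$ of Theorem~\ref{th1}.

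\textbf{The main obstacle.} The conceptual content is light; the work is bookkeeping. One must check that peeling the factors in the order $a,b,c,d$ and invoking Lemma~\ref{lemma1} at each stage reproduces precisely the displayed argument lists — $A_{n-2}(a,b+1,c+1,d+1)$, $A_{n-1}(b,a,c+1,d+1)$, $A_n(c,b,a,d+1)$, $A_{n+1}(d,b,c,a)$ and the remaining ones — whose slots are filled in a way that is not invariant under permuting $a,b,c,d$, even though the Wilson polynomials themselves are symmetric in their parameters. So one must track carefully which parameters still carry the $+1$ shift and in what order the already–lowered ones are listed in each $A_m$. Once the argument lists and subscripts are pinned down, identifying the five coefficients is a mechanical regrouping of the expanded tree, and the absence of any $P_{n+j}$ with $|j|>2$ is guaranteed a priori by Theorem~\ref{th1}.
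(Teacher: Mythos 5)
Your proposal is correct and follows essentially the same route as the paper: it uses $\mathbb{D}_{x}^2P_n\big(s^2;a,b,c,d\big)=n(n-1)P_{n-2}\big(s^2;a+1,b+1,c+1,d+1\big)$ (the paper's equations~(\ref{e10m}) and~(\ref{nderiv})) and then peels off the four quadratic factors in the order $a,b,c,d$ via the first four contiguous relations of Lemma~\ref{lemma1}, collecting the resulting $2^4$ terms by degree exactly as the paper does. The argument lists and subscripts you describe for the $A_m$'s match the paper's, so no gap remains.
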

\begin{proof}Substitute the polynomial coefficients $\phi(x({\rm i}s))$ and $\psi(x({\rm i}s))$, $x(z)=z^2$ of (\ref{e10d}) given in Corollary~\ref{co1} and take into account the fact that
\begin{gather*}
U_2(x({\rm i}s))=\left(\frac{x\big({\rm i}s+\frac{1}{ 2}\big)-x\big({\rm i}s-\frac{1}{ 2}\big)}{ 2}\right)^2
\end{gather*} to obtain
$\pi(x({\rm i}s))=\big(s^2+a^2\big)\big(s^2+b^2\big)\big(s^2+c^2\big)\big(s^2+d^2\big)$. Since
\begin{gather}\label{e10m}
\mathbb{D}_{x}^2 P_n\big(s^2;a,b,c,d\big)=n(n-1)P_{n-2}\big(s^2;a+1,b+1,c+1,d+1\big)=\frac{\delta^2P_n\big(s^2;a,b,c,d\big)}{ \delta^2s}
\end{gather} (see (\ref{nderiv})), the structure relation (\ref{e9}) becomes
\begin{gather*}
\big(s^2+a^2\big)\big(s^2+b^2\big)\big(s^2+c^2\big)\big(s^2+d^2\big)P_{n-2}\big(s^2;a+1,b+1,c+1,d+1\big)\\
\qquad{}=\sum_{j=-2}^{2}\frac{a_{n,n+j}}{ n(n-1)}P_{n+j}\big(s^2;a,b,c,d\big).
\end{gather*}
Replace $n$ by $n-2$ is the first equation of Lemma~\ref{lemma1}, multiply the obtained equation by $\big(s^2+b^2\big)\big(s^2+c^2\big)\big(s^2+d^2\big)$ and use the second, third and the fourth relation of this lemma to get $a_{n,n+j}$, $j\in\{-2,\dots,2\}$ in terms of~$A_{n+j}$.
\end{proof}

\begin{Proposition} The second structure relation \eqref{secondSR} for monic Wilson polynomials $P_n\big(s^2;a,b,$ $c,d\big)$ is
\begin{gather*}
P_n\big(s^2;a,b,c,d\big)=\sum_{j=-2}^{2}{b_{n,n+j}}\frac{\delta^2P_{n+j}\big(s^2;a,b,c,d\big)}{ \delta^2s^2},
\end{gather*}
	where
\begin{gather*}
(n+2)(n+1)b_{n,n+2}=1,\\
(n+1)(n)b_{n,n+1}= C_n(b,a+1,c,d) + C_n(a,b,c,d) + C_n(d,c+1,a+1,b+1)\\
\hphantom{(n+1)(n)b_{n,n+1}=}{} +C_n(c,d,a+1,b+1),\\
(n)(n-1)b_{n,n}= \big( C_n(b,a+1,c,d) +C_n(a,b,c,d)\big) \big( C_{n-1}(d,c+1,a+1,b+1)\\
\hphantom{(n)(n-1)b_{n,n}=}{} + C_{n-1}(c,d,a+1,b+1) \big) +C_n(c,d,a+1,b+1)\\
\hphantom{(n)(n-1)b_{n,n}=}{}\times C_{n-1}(d,c+1,a+1,b+1) +C_n(a,b,c,d) C_{n-1}(b,a+1,c,d),\\
(n-1)(n-2)b_{n,n-1}=\big( C_n(b,a+1,c,d) +C_n(a,b,c,d)\big) C_{n-1}(c,d,a+1,b+1)\\
\hphantom{(n-1)(n-2)b_{n,n-1}}{} \times C_{n-2}(d,c+1,a+1,b+1) +C_n(a,b,c,d) C_{n-1}(b,a+1,c,d)\\
\hphantom{(n-1)(n-2)b_{n,n-1}}{} \times \big( C_n( n-2,d,c+1,a+1,b+1) +C_{n-2}(c,d,a+1,b+1)\big),\\
(n-2)(n-3)b_{n,n-2}=C_n(b,a,c,d) C_{n-1}(a,b+1,c,d) C_{n-2}(d,c,a+1,b+1) \\
\hphantom{(n-2)(n-3)b_{n,n-2}=}{} \times C_{n-3}(c,d+1,a+1,b+1),\end{gather*}
and $C_n(a,b,c,d)$ is given in Lemma~{\rm \ref{lemma1}}.
\end{Proposition}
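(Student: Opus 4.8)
The plan is to follow the proof of the preceding proposition (the first structure relation for monic Wilson polynomials), but running the parameter shifts in the opposite direction. By Corollary~\ref{co2} and Proposition~\ref{Prop2} we already know that the monic Wilson polynomials satisfy a second structure relation of the displayed five-term shape with $b_{n,n+2}\neq0$, so only the explicit evaluation of the coefficients remains. The first step is to apply the lowering relation~(\ref{e10m}) (see~(\ref{nderiv})), which gives
\begin{gather*}
\frac{\delta^2 P_{n+j}\big(s^2;a,b,c,d\big)}{\delta^2 s^2}=(n+j)(n+j-1)\,P_{n+j-2}\big(s^2;a+1,b+1,c+1,d+1\big).
\end{gather*}
Substituting this on the right-hand side reduces the identity to be proved to a connection formula: one must express $P_n\big(s^2;a,b,c,d\big)$ as an explicit linear combination of the five polynomials $P_{n},P_{n-1},P_{n-2},P_{n-3},P_{n-4}$ evaluated at the raised parameters $(a+1,b+1,c+1,d+1)$, the coefficient of $P_{n+j-2}\big(s^2;a+1,b+1,c+1,d+1\big)$ then being $(n+j)(n+j-1)\,b_{n,n+j}$.

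To obtain that connection formula I would iterate the single-parameter contiguous relations collected in Lemma~\ref{lemma1}, namely the identities (with coefficients $C_m(\cdot,\cdot,\cdot,\cdot)$, the arguments suitably permuted) that write $P_m\big(s^2;a,b,c,d\big)$ as $P_m$ plus a constant multiple of $P_{m-1}$ with one of the four parameters raised by one. Applying such a relation once for each of $a,b,c,d$ in a fixed order --- at each stage to every summand then present --- successively raises all four parameters, and since each application contributes at most one extra summand of degree one lower, after the four steps the expansion of $P_n\big(s^2;a,b,c,d\big)$ involves precisely $P_{n},\dots,P_{n-4}$ with parameters $(a+1,b+1,c+1,d+1)$. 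Collecting terms, the coefficient of the top term is $1$ because every contiguous relation preserves the monic normalization, which gives $(n+2)(n+1)b_{n,n+2}=1$; the coefficient of the next term is the sum of the four ``single-drop'' contributions, that is exactly $C_n(b,a+1,c,d)+C_n(a,b,c,d)+C_n(d,c+1,a+1,b+1)+C_n(c,d,a+1,b+1)$; and the coefficients of the three lower terms are, respectively, a sum over all pairs, a sum over all triples, and the single product over all four of the parameters that are dropped, matching the displayed products and sums of two, three and four $C$-factors. Equating these with $(n+j)(n+j-1)b_{n,n+j}$ and simplifying the Pochhammer quotients yields the stated closed forms.

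The main obstacle is the bookkeeping, not any conceptual difficulty: one must fix the order in which $a,b,c,d$ are raised, track the induced permutation of the arguments inside each $C_m$ at every stage, and correctly assemble the mixed products in the fourfold composition (for instance the $C_nC_{n-1}$ and $C_nC_{n-1}C_{n-2}$ contributions making up $b_{n,n-2}$ and $b_{n,n-3}$), checking that different admissible orderings give the same total so that the coefficients are well defined. A secondary point is to confirm that the truncation is exactly five terms --- already guaranteed by Proposition~\ref{Prop2} --- and that the extreme coefficient $b_{n,n-4}$ is non-zero, which is immediate since it is a product of $C_m$-factors each of which is non-zero; everything that remains is the routine Pochhammer simplification.
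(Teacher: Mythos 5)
Your proposal is correct and follows essentially the same route as the paper: the paper also uses \eqref{e10m} (i.e., \eqref{nderiv}) to reduce the identity to a connection formula between $P_n\big(s^2;a,b,c,d\big)$ and the polynomials with all four parameters raised by one, and then obtains that formula by iterating the contiguous relations \eqref{e10k}--\eqref{e10l} of Lemma~\ref{lemma1}, raising $a$ and $b$ to get \eqref{e10n} and then raising $c$ and $d$ via the parameter symmetry of $P_n$, exactly the fourfold single-parameter composition you describe. The only difference is presentational (the paper packages the four steps as two applications of the two-step relation \eqref{e10n}), so your bookkeeping of single, double, triple and quadruple products of the $C$-coefficients is precisely what the paper carries out.
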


\begin{proof} From Corollary \ref{co2} and the relation (\ref{e10m}), we obtain
\begin{gather*}
P_n\big(s^2;a,b,c,d\big)=\sum_{j=-2}^{2}(n+j)(n+j-1)b_{n,n+j}P_{n-2+j}\big(s^2;a+1,b+1,c+1,d+1\big).
\end{gather*}
Take into account (\ref{e10l}) to obtain
\begin{gather}
P_n\big(s^2;a,b,c,d\big)= P_n\big(s^2;a+1,b+1,c,d\big) + \big( C_n (b,a+1,c,d ) +C_n (a,b,c,d ) \big)\nonumber\\
\hphantom{P_n\big(s^2;a,b,c,d\big)=}{}\times P_{n-1} \big(s^2;a+1,b+1,c,d \big)\nonumber\\
\hphantom{P_n\big(s^2;a,b,c,d\big)=}{}+C_n (a,b,c,d ) C_{n-1} (b,a+1,c,d ) P_{n-2} (a+1,b+1,c,d ). \label{e10n}
\end{gather}

Substitute $c$ by $c+1$ and $d$ by $d+1$ to obtain
\begin{gather*}
	P_n\big(s^2;a,b,c+1,d+1\big)=P_n \big(s^2;a+1,b+1,c+1,d+1 \big)\\
	\qquad{} + \big( C_n (b,a+1,c+1,d+1 ) +C_n (a,b,c+1,d+1 ) P_{n-1} \big(s^2;a+1,b+1,c+1,d+1 \big)\\
	\qquad{} +C_n (a,b,c+1,d+1 ) C_{n-1} (b,a+1,c+1,d+1 )  P_{n-2} \big( s^2;a+1,b+1,c+1,d+1 \big).
\end{gather*}
Permute $a$ and $c$, $b$ and $d$ and use the fact that $P_n$ is symmetric with respect to its parameters to obtain
\begin{gather*}
P_n\big(s^2;a,b,c+1,d+1\big)=P_n \big(s^2;d+1,c+1,a+1,b+1 \big)\\
\qquad{}+ \big( C_n (d,c+1,a+1,b+1) +C_n (c,d,a+1,b+1 ) \big) P_{n-1}
\big(s^2;d+1,c+1,a+1,b+1 \big)\\
\qquad{} +C_n (c,d,a+1,b +1 ) C_{n-1} (d,c+1,a+1,b+1 ) P_{n-2} \big(s^2;d+1,c+1,a+1,b+1 \big).
\end{gather*}
Take this relation into account in (\ref{e10n}) to obtain the result.
\end{proof}
\subsection{The Askey--Wilson polynomials}
Coefficients of the first structure relation (\ref{e9}) for the Askey--Wilson polynomials have been given in~\cite{MK2017}. As for those of the second structure relation we have the following:

\begin{Proposition} The Askey--Wilson polynomials satisfy the second structure relation
\begin{gather*}P_n(x;a,b,c,d\,|\,q)=\sum_{j=-2}^{2}b_{n,n+j}\mathcal{D}_q^2P_{n+j}(x;a,b,c,d\,|\,q), \qquad x=\cos\theta,\end{gather*}
where
\begin{gather*}
\gamma_{n+2}\gamma_{n+1}b_{n,n+2}=1,\\
-2\gamma_{n}\gamma_{n-1}b_{n,n+1}=C_n (b,aq,c,d ) +C_n (a,b,c,d )+C_n (d,cq,aq,bq ) +C_n (c,d,aq,bq),\\
4\gamma_{n+1}\gamma_{n}b_{n,n}= ( C_n (b,aq,c,d ) +C_n (a,b,c,d) ) ( C_{n-1} (d,cq,aq,bq ) +C_{n-1} (c,d,aq,bq ) )\\
\qquad{} +C_n (c,d,a q,bq ) C_{n-1} (d,cq,aq,bq ) +C_n (a,b,c,d ) C_{n-1} (b,aq,c,d ),\\
-8\gamma_{n}\gamma_{n-1}b_{n,n-1}= ( C_n (b,aq,c,d ) +C_n (a,b,c,d) ) C_{n-1} (c,d,aq,bq ) C_{n-2}(d,cq,aq,bq )\\
 \qquad {}+C_n (a,b,c,d ) C_{n-1} (b,aq,c,d ) ( C_{n-2} (d,cq,a q,bq ) +C_{n-2} (c,d,aq,bq ) ),\\
16\gamma_{n-1}\gamma_{n-2}b_{n,n-2}=C_n ( a,b,c,d ) C_{n-1} (b,aq,c,d) C_{n-2} (c,d,aq,bq ) C_{n-3} (d,cq,aq,bq ),
\end{gather*}
and \begin{gather*} C_n(a,b,c,d)=\frac {a \big( 1-{q}^{n} \big) \big( 1-bc{q}^{n-1} \big)
\big( 1-bd{q}^{n-1} \big) \big( 1-dc{q}^{n-1} \big) }{ \big(
1-abcd{q}^{2 n-2} \big) \big( 1-abcd{q}^{2 n-1} \big) }
\end{gather*} is the coefficient appearing in Lemma~{\rm \ref{lemma2}}.
\end{Proposition}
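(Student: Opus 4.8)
The plan is to reproduce, on the $q$-quadratic lattice $x(s)=\frac{q^{-s}+q^{s}}{2}$ (on which $\mathcal{D}_q=\mathbb{D}_{x}$), the argument just used for the Wilson polynomials. Corollary~\ref{co2} already tells us that the Askey--Wilson polynomials satisfy a relation $P_n(x;a,b,c,d\,|\,q)=\sum_{j=-2}^{2}b_{n,n+j}\mathcal{D}_q^{2}P_{n+j}(x;a,b,c,d\,|\,q)$ with $b_{n,n+2}\neq 0$, so the only remaining task is to evaluate the five coefficients explicitly.

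First I would use the second-order lowering relation of Lemma~\ref{lemma2}, which writes $\mathcal{D}_q^{2}P_{m}(x;a,b,c,d\,|\,q)$ as an explicit constant times the parameter-shifted Askey--Wilson polynomial $P_{m-2}(x;aq,bq,cq,dq\,|\,q)$ (the constant being a product of two consecutive $\gamma$'s, up to the power of $2$ coming from the normalization $x=\cos\theta$). Substituting this into the relation from Corollary~\ref{co2} turns the second structure relation into an expansion of $P_n(x;a,b,c,d\,|\,q)$ in the shifted family $\{P_m(x;aq,bq,cq,dq\,|\,q)\}_{m\ge0}$, with the coefficient of $P_{n-2+j}(x;aq,bq,cq,dq\,|\,q)$ equal to the corresponding constant times $b_{n,n+j}$. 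Since both families are monic of matching degree, comparing leading terms immediately yields the relation $\gamma_{n+2}\gamma_{n+1}b_{n,n+2}=1$.

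To pin down the other four coefficients I would expand $P_n(x;a,b,c,d\,|\,q)$ itself in the same shifted family by iterating the three-term connection formula of Lemma~\ref{lemma2} twice. The first step expands $P_n(x;a,b,c,d\,|\,q)$ in $\{P_m(x;aq,bq,c,d\,|\,q)\}$ with connection coefficients $1$, $C_n(b,aq,c,d)+C_n(a,b,c,d)$ and $C_n(a,b,c,d)C_{n-1}(b,aq,c,d)$; the second expands each $P_m(x;aq,bq,c,d\,|\,q)$ in $\{P_\ell(x;aq,bq,cq,dq\,|\,q)\}$, and is obtained from the first step by the substitution $c\mapsto cq$, $d\mapsto dq$ followed by an interchange of the parameter pairs $(a,c)$ and $(b,d)$ --- this last move being legitimate precisely because the Askey--Wilson polynomial is symmetric in $a,b,c,d$. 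Composing the two expansions gives a five-term expansion of $P_n(x;a,b,c,d\,|\,q)$ over $\{P_{n-2+j}(x;aq,bq,cq,dq\,|\,q)\}_{j=-2}^{2}$ whose coefficients are exactly the bracketed products of $C_n$, $C_{n-1}$, $C_{n-2}$, $C_{n-3}$ displayed in the statement; matching these term by term against the constants multiplying the $b_{n,n+j}$ from the previous paragraph produces the asserted formulas for $b_{n,n+1}$, $b_{n,n}$, $b_{n,n-1}$, $b_{n,n-2}$.

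The only laborious --- and error-prone --- part is the bookkeeping in this last step: one must keep straight which of the four parameters has been incremented by a factor $q$ at each stage, apply the parameter-symmetry of $P_n(x;a,b,c,d\,|\,q)$ correctly to reduce the second connection step to the first, and carry the $\gamma$-prefactors together with the accompanying powers of $2$ through the index shift $m=n-2+j$ so that each one lands on the right $b_{n,n+j}$. Conceptually nothing new is needed beyond the Wilson computation; all the substantive ingredients --- Corollary~\ref{co2}, the lowering and connection formulas of Lemma~\ref{lemma2}, and the symmetry of the Askey--Wilson polynomials --- are already in place.
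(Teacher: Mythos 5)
Your proposal follows the paper's own proof essentially step for step: Corollary~\ref{co2} together with the iterated lowering relation~(\ref{e10o}) with $k=2$ turns the second structure relation into an expansion of $P_n(x;a,b,c,d\,|\,q)$ in the shifted family $\{P_m(x;aq,bq,cq,dq\,|\,q)\}$, and the two contiguous relations of Lemma~\ref{lemma2} are composed (the second obtained from the first via $c\mapsto cq$, $d\mapsto dq$ and the parameter symmetry of $P_n$) and matched term by term against the $\gamma_{n+j}\gamma_{n-1+j}b_{n,n+j}$. One small correction to your bookkeeping: the powers of $2$ (and the minus signs) in the stated coefficients come from the factors $-\tfrac{C_n}{2}$ in the contiguous relations of Lemma~\ref{lemma2}, not from the lowering relation, which for the monic polynomials carries only the product of $\gamma$'s.
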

\begin{proof} Use Corollary \ref{co2}, the relation (\ref{e10o}) with $k=2$, as well as the fact that $\mathbb{D}_{x} P_n(x;a,b,c$, $d\,|\,q) =\mathcal{D}_q P_n(x;a,b,c,d\,|\,q)$ to obtain
\begin{gather*}P_n(x;a,b,c,d\,|\,q)=\sum_{j=-2}^{2}\gamma_{n+j}\gamma_{n-1+j}b_{n,n+j}P_{n-2+j}(x;aq,bq,cq,dq\,|\,q).\end{gather*}
Substitute the second relation of Lemma \ref{lemma2} into the first to obtain
\begin{gather*}
4P_n(x;a,b,c,d\,|\,q)=C_n (a,b,c,d ) C_{ n-1} (b,aq,c,d ) P_{n-2} (x;bq,aq,c,d\,|\,q )\\
{}+4P_n (x;bq,aq,c,d\,|\,q)- 2(C_n (b,aq,c,d ) +C_n (a,b,c,d )) P_{n-1}(x;bq,aq,c,d\,|\,q ).
\end{gather*}
Substitute $c$ by $cq$, $d$ by $dq$. Permute $a$ and $c$, $b$ and $d$ and use the symmetric property of $P_n$ with respect to its parameters to obtain
\begin{gather*}
4P_n(x;aq,bq,c,d\,|\,q) =4P_n (x;dq,cq,aq,bq\,|\,q ) -2 (C_n (d,cq, aq,bq ) +C_n (c,d,aq,bq ) )\\
{}\times P_{n-1} (x;dq,cq,aq,bq\,|\,q )+C_n (c,d, aq,bq) C_{n-1} (d,cq,aq,bq ) P_{n-2}(x;dq,cq,aq,bq|q ).
\end{gather*}
Take into account this relation in the previous one to obtain the result.
\end{proof}

\begin{Remark} For the continuous $q$-Hermite polynomials $2^nP_n(x)=H_n(x\,|\,q)$ (cf.~\cite[equation~(14.26.1)]{KSL}), the \textit{second structure relation} (\ref{secondSR}) reads as
\begin{gather*}
P_n(x)=\sum_{j=-2}^{2}b_{n,n+j}\gamma_{n+j}\gamma_{n-1+j}P_{n-2+j}(x),
\end{gather*}
for $\mathbb{D}_{x} P_n(x)=\mathcal{D}_qP_n(x)=\gamma_nP_{n-1}(x)$ (cf.\ \cite[equation~(14.26.7)]{KSL}). Therefore $\gamma_{n+2}\gamma_{n+1}b_{n,n+2}=1$ and $b_{n,n+j}=0, j=-2,\dots,1$. Thus, for the monic continuous $q$-Hermite polynomials~$P_n$, the right hand side of~(\ref{secondSR}) is $P_n$. This result is analogous to that of monic Hermite polynomials for the second structure relation~(\ref{e57b}) (cf.~\cite[Table~VI]{Marcellan1994}).
\end{Remark}

\section{Conclusion}
We present another treatment of generalized Bochner theorem and develop two structure relations for classical orthogonal polynomials of the quadratic and $q$-quadratic variable: a~first structure relation that we use to characterize Wilson polynomials, continuous dual Hahn polynomials, Askey--Wilson polynomials and subcases, including limiting cases when one or more parameters tend to~$\infty$, as the family of classical orthogonal polynomials of the quadratic and $q$-quadratic variable; a~second structure relation involving only the divided-difference operator $\mathbb{D}_{x}$, that generalizes the Wilson operator and the Askey--Wilson operator. Our treatment of the gene\-ra\-li\-zed Bochner theorem leads to explicit solutions of the difference equation \cite[equation~(1.3)]{VinetZhedanov}. This work generalizes the result in \cite[Theorem~3.1]{Mourad2003} as well as our previous work (cf.~\cite{MK2017}), where we completed and proved the conjecture by Ismail (cf.\ \cite[equation~(24.7.9)]{Mourad-2005}). Moreover, by completing the work of Koornwinder (cf.~\cite{Koorn}) as shown in~\cite{MK2017} and that of Costas-Santos and Marcell\'{a}n (cf.~\cite{CSM}) in the present paper, we have illustrated that polynomials appearing in the Askey scheme and $q$-Askey scheme \cite{KSL} can be effectively studied by using only the operator~$\mathbb{D}_{x}$.

\appendix\section{Appendix}
In this section, we state and prove some contiguous relations for Wilson polynomials and Askey--Wilson polynomials. To the best of our knowledge, those for Wilson polynomials are new.
\begin{Lemma}\label{lemma1} The monic Wilson polynomials
\begin{gather*}
P_n\big(s^2;a,b,c,d\big)=\frac{W_n\big(s^2;a,b,c,d\big)}{ (-1)^n(a+b+c+d+n-1)_n},\end{gather*} where $W_n\big(s^2;a,b,c,d\big)$ is given by~\eqref{e59}, satisfy the contiguous relations
\begin{gather}
\big(s^2+a^2\big)P_n\big(s^2;a+1,b,c,d\big)=P_{n+1}\big(s^2;a,b,c,d\big)+A_n(a,b,c,d)P_n\big(s^2;a,b,c,d\big),\nonumber\\
\big(s^2+b^2\big)P_n\big(s^2;a,b+1,c,d\big)=P_{n+1}\big(s^2;a,b,c,d\big)+A_n(b,a,c,d)P_n\big(s^2;a,b,c,d\big),\nonumber\\
\big(s^2+c^2\big)P_n\big(s^2;a,b,c+1,d\big)=P_{n+1}\big(s^2;a,b,c,d\big)+A_n(c,b,a,d)P_n\big(s^2;a,b,c,d\big),\nonumber\\
\big(s^2+d^2\big)P_n\big(s^2;a,b,c,d+1\big)=P_{n+1}\big(s^2;a,b,c,d\big)+A_n(d,b,c,a)P_n\big(s^2;a,b,c,d\big),\nonumber\\
P_n\big(s^2;a,b,c,d\big)=P_n\big(s^2;a+1,b,c,d\big)+C_n(a,b,c,d)P_{n-1}\big(s^2;a+1,b,c,d\big)\label{e10k},\\
P_n\big(s^2;a+1,b,c,d\big)=P_n\big(s^2;a+1,b+1,c,d\big)\nonumber\\
\hphantom{P_n\big(s^2;a+1,b,c,d\big)=}{} +C_n(b,a+1,c,d)P_{n-1}\big(s^2;a+1,b+1,c,d\big),\label{e10l}
\end{gather}
where
\begin{gather*}
A_n(a,b,c,d)=\frac{(a+b+c+d+n-1)(a+b+n)(a+c+n)(a+d+n)}{ (a+b+c+d+2n-1)(a+b+c+d+2n)},\\
C_n(a,b,c,d)=\frac{n(b+c+n-1)(b+d+n-1)(c+d+n-1)}{ (a+b+c+d+2n-2)(a+b+c+d+2n-1)}
\end{gather*}
are the coefficients appearing in the three-term recurrence relation {\rm \cite[equation~(9.1.5)]{KSL}} of $P_n\big(s^2;a,b,c,d\big)$.
\end{Lemma}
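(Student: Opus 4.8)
Two reductions come for free. First, $W_n\big(s^2;a,b,c,d\big)$, and hence the monic polynomial $P_n\big(s^2;a,b,c,d\big)$, is invariant under any permutation of $a,b,c,d$; so the second, third and fourth contiguous relations follow from the first by the substitutions $a\leftrightarrow b$, $a\leftrightarrow c$, $a\leftrightarrow d$ (which also permute the arguments of $A_n$), and \eqref{e10l} follows from \eqref{e10k} by applying $a\leftrightarrow b$, using symmetry, and then shifting $a\mapsto a+1$. Second, the closing assertion that $A_n$ and $C_n$ are the recurrence coefficients of $P_n\big(s^2;a,b,c,d\big)$ is just a rewriting of \cite[equation~(9.1.5)]{KSL} in the monic normalization, using that the leading coefficient of $W_n\big(s^2;a,b,c,d\big)$ in $s^2$ equals $(-1)^n(a+b+c+d+n-1)_n$ (the $k=n$ term of the ${}_4F_3$ in \eqref{e59}); explicitly, the monic recurrence reads
\begin{gather*}
\big(s^2+a^2\big)P_n\big(s^2;a,b,c,d\big)=P_{n+1}\big(s^2;a,b,c,d\big)+(A_n+C_n)P_n\big(s^2;a,b,c,d\big)+A_{n-1}C_nP_{n-1}\big(s^2;a,b,c,d\big),
\end{gather*}
with $A_n=A_n(a,b,c,d)$, $C_n=C_n(a,b,c,d)$. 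It therefore suffices to prove the single relation
\begin{gather*}
\big(s^2+a^2\big)P_n\big(s^2;a+1,b,c,d\big)=P_{n+1}\big(s^2;a,b,c,d\big)+A_n(a,b,c,d)P_n\big(s^2;a,b,c,d\big).\tag{$\ast$}
\end{gather*}

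The plan for $(\ast)$ is a direct computation with the hypergeometric form \eqref{e59}. Starting from the series for $W_n\big(s^2;a+1,b,c,d\big)$, I would absorb the prefactor $s^2+a^2$ via the elementary identity $\big(s^2+a^2\big)(a+1-{\rm i}s)_k(a+1+{\rm i}s)_k=(a-{\rm i}s)_{k+1}(a+{\rm i}s)_{k+1}$, which simultaneously shifts the first parameter from $a+1$ back to $a$. After reindexing the summation variable and rewriting the shifted Pochhammer symbols (e.g.\ $(a+1+b)_{k-1}=(a+b)_k/(a+b)$ and $(k-1)!=k!/k$), the only obstruction to a clean ${}_4F_3$ is a rational factor of the form $k/\big((k-1-n)(k-1+n+a+b+c+d)\big)$; a partial-fraction decomposition of this factor splits the sum into two terminating ${}_4F_3$'s plus two constant terms, the summand with $-(n+1)$ in its numerator assembling into a multiple of $W_{n+1}\big(s^2;a,b,c,d\big)$, the one with $-n$ into a multiple of $W_n\big(s^2;a,b,c,d\big)$, and the two constants cancelling. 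Returning to the monic normalization and simplifying the resulting Pochhammer ratios produces $(\ast)$ with the coefficient of $P_{n+1}$ equal to $1$ and the coefficient of $P_n$ equal to $\dfrac{(a+b+c+d+n-1)(a+b+n)(a+c+n)(a+d+n)}{(a+b+c+d+2n-1)(a+b+c+d+2n)}=A_n(a,b,c,d)$. (An alternative, valid for positive parameters and then extended by the fact that both sides are rational in $a,b,c,d$: since $\big|\Gamma(a+1+{\rm i}s)\big|^2=\big(s^2+a^2\big)\big|\Gamma(a+{\rm i}s)\big|^2$, the Wilson weight obeys $\big(s^2+a^2\big)w(s;a,b,c,d)=w(s;a+1,b,c,d)$; expanding the monic polynomial $\big(s^2+a^2\big)P_n\big(s^2;a+1,b,c,d\big)$ in the basis $\big\{P_j\big(s^2;a,b,c,d\big)\big\}_{j\geq 0}$ and invoking orthogonality of $P_n(\,\cdot\,;a+1,b,c,d)$ against all lower-degree polynomials annihilates every coefficient except those of $P_{n+1}$ and $P_n$, the latter identified by comparing coefficients of $(s^2)^n$.) I expect this step — the partial-fraction/reindexing bookkeeping, the exact cancellation of the two constant terms, and the collapse of the two ${}_4F_3$-coefficients to $1$ and to $A_n$ after rescaling — to be the main obstacle; the rest is routine.

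Finally, $(\ast)$ yields the last two relations algebraically: subtracting $(\ast)$ from the monic recurrence above gives
\begin{gather*}
\big(s^2+a^2\big)\big[P_n\big(s^2;a,b,c,d\big)-P_n\big(s^2;a+1,b,c,d\big)\big]=C_n\big[P_n\big(s^2;a,b,c,d\big)+A_{n-1}P_{n-1}\big(s^2;a,b,c,d\big)\big],
\end{gather*}
and the right-hand bracket equals $\big(s^2+a^2\big)P_{n-1}\big(s^2;a+1,b,c,d\big)$ by $(\ast)$ with $n$ replaced by $n-1$; cancelling the common factor $s^2+a^2$ gives \eqref{e10k}, and \eqref{e10l} then follows from the symmetry reduction mentioned at the outset.
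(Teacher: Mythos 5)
Your proposal is correct, but it reaches the two nontrivial relations by a genuinely different route than the paper. For the first relation the paper does not manipulate the ${}_4F_3$ series at all: it expands $\big(s^2+a^2\big)P_n\big(s^2;a+1,b,c,d\big)$ in the orthogonal basis $\big\{P_j\big(s^2;a,b,c,d\big)\big\}$, uses the weight shift $w\big(s^2;a+1,b,c,d\big)=\big(s^2+a^2\big)w\big(s^2;a,b,c,d\big)$ (your parenthetical alternative) to annihilate all coefficients below $P_n$, and then pins down the remaining coefficient not by comparing coefficients of $\big(s^2\big)^n$, as you suggest, but by the cleaner evaluation $s^2=-a^2$, which kills the left-hand side and yields $l_n=A_n(a,b,c,d)$ at once. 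Your primary route — absorbing $\big(s^2+a^2\big)$ via $\big(s^2+a^2\big)(a+1\pm {\rm i}s)_k=(a\pm{\rm i}s)_{k+1}$, reindexing, and splitting the factor $k/\big((k-1-n)(k-1+n+a+b+c+d)\big)$ by partial fractions — does work: the two pieces assemble into $W_{n+1}$ and $W_n$ with coefficients $-1/(a+b+c+d+2n)$ and $(a+b+n)(a+c+n)(a+d+n)/(a+b+c+d+2n)$, the $j=0$ terms cancel, and the monic rescaling collapses these to $1$ and $A_n(a,b,c,d)$; it is more bookkeeping but is a purely formal rational identity needing neither the weight nor the orthogonality range of the parameters. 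For \eqref{e10k} the divergence is larger: the paper again expands in an orthogonal basis and determines the surviving coefficient by applying $\mathbb{D}_{x}^{\,n-1}$ together with the derivative formula \eqref{nderiv}, whereas you subtract your relation $(\ast)$ from the monic three-term recurrence $\big(s^2+a^2\big)P_n=P_{n+1}+(A_n+C_n)P_n+A_{n-1}C_nP_{n-1}$ (a correct monic rewriting of \cite[equation~(9.1.5)]{KSL}), recognize the bracket $P_n+A_{n-1}P_{n-1}$ as $\big(s^2+a^2\big)P_{n-1}\big(s^2;a+1,b,c,d\big)$ via $(\ast)$ at index $n-1$, and cancel the polynomial factor $s^2+a^2$; this is a neat algebraic shortcut that avoids both the weight-shift argument and \eqref{nderiv}. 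The symmetry reductions giving the second through fourth relations and \eqref{e10l} are identical to the paper's.
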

\begin{proof} For the first relation,
write \begin{gather*}\big(s^2+a^2\big)P_n\big(s^2;a+1,b,c,d\big)=\sum_{j=0}^{n+1}l_jP_j\big(s^2;a,b,c,d\big),\end{gather*}
and use the fact that $\big\{P_n\big(s^2;a,b,c,d\big)\big\}_{n=0}^{\infty}$ is orthogonal with respect to the weight function \begin{gather*}w\big(s^2;a,b,c,d\big)=\left|\frac{\Gamma(a+{\rm i}s)\Gamma(b+{\rm i}s)\Gamma(c+{\rm i}s)\Gamma(d+{\rm i}s)}{ \Gamma(2{\rm i}s)}\right|\end{gather*} on the interval $(0; \infty)$ (cf.~\cite[equation~(9.1.2)]{KSL}), where $\Gamma$ is the gamma function, to obtain
\begin{gather*}
l_j\int_{0}^{\infty}w\big(s^2;a,b,c,d\big)P^2_j\big(s^2;a,b,c,d\big){\rm d}s\\
\qquad{} =\int_{0}^{\infty}w\big(s^2;a,b,c,d\big)\big(s^2+a^2\big)P_n\big(s^2;a+1,b,c,d\big)P_j\big(s^2;a,b,c,d\big){\rm d}s.
\end{gather*}
Use the relation
\begin{gather}\label{wilsonweight}
w\big(s^2;a+1,b,c,d\big)=\big(s^2+a^2\big)w\big(s^2;a,b,c,d\big),
\end{gather}
obtained from the property $\Gamma(z+1)=z\Gamma(z)$, to obtain
\begin{gather*}l_j\int_{0}^{\infty}w\big(s^2;a,b,c,d\big)P^2_j\big(s^2;a,b,c,d\big){\rm d}s\\
\qquad{} =\int_{0}^{\infty}w\big(s^2;a+1,b,c,d\big)P_n\big(s^2;a+1,b,c,d\big)P_j\big(s^2;a,b,c,d\big){\rm d}s=0,\end{gather*}
for $j<n$. That is \begin{gather*}\big(s^2+a^2\big)P_n\big(s^2;a+1,b,c,d\big)=P_{n+1}\big(s^2;a+1,b,c,d\big)+l_{n}P_{n}\big(s^2;a,b,c,d\big).\end{gather*}
Let $s^2=-a^2$ and solve the equation to obtain $l_{n}=A_n(a,b,c,d)$. Since $w\big(s^2;a,b,c,d\big)$ is symmetric with respect to $a$, $b$, $c$ and $d$, and $P_n\big(s^2;a,b,c,d\big)$ is monic, $P_n\big(s^2;a,b,c,d\big)$ is symmetric with respect to $a$, $b$, $c$ and $d$. Using this property, the second, third and fourth relation are deduced from the first. Note that, since the family $\big\{P_n\big(s^2;a+1,b,c,d\big)\big\}_{n=0}^{\infty}$ is orthogonal with respect to $\big(s^2+a^2\big)w\big(s^2;a,b,c,d\big)$ (see~(\ref{wilsonweight})), the polynomial $\big(s^2+a^2\big)$ is nonnegative on $(0, \infty)$ for $\operatorname{Re}(a,b,c,d)>0$ and non-real parameters occur in conjugate pairs (see \cite[p.~186]{KSL}), the first relation of the lemma can be also deduced from \cite[Theorem~2.7.1]{Mourad-2005}.

For (\ref{e10k}), expand $P_n\big(s^2;a+1,b,c,d\big)$ in the basis $P_j\big(s^2;a,b,c,d\big)$; use the fact that $\big\{P_n\big(s^2;a$, $b,c,d\big)\big\}_{n=0}^{\infty}$ is orthogonal with respect to $w\big(s^2;a,b,c,d\big)$ as well as the relation (\ref{wilsonweight}) to obtain
\begin{gather*}
P_n\big(s^2;a,b,c,d\big)=P_{n}\big(s^2;a+1,b,c,d\big)+m_{n-1}P_{n-1}\big(s^2;a+1,b,c,d\big).
\end{gather*}
Apply $\mathbb{D}_{x}$ $n-1$ time to both sides then use the relation, with $k=n-1$,
\begin{gather}\label{nderiv} \mathbb{D}_{x}^kP_n\big(s^2;a,b,c,d\big)=(-n)_kP_{n-k}\left(s^2;a+\frac{k}{ 2},b+\frac{k}{ 2},c+\frac{k}{2},d+\frac{k}{2}\right),\end{gather}
obtained by iterating \cite[equation~(9.1.8)]{KSL}, with \begin{gather*}W_n(s^2;a,b,c,d)=(-1)^n(a+b+c+d+n-1)_nP_n(s^2;a,b,c,d),\end{gather*} and solve the equation with unknown $m_{n-1}$ to obtain $m_{n-1}=C_n(a,b,c,d)$. For the last relation, permute $a$ and $b$ in~(\ref{e10k}) then substitute $a$ for $a+1$ and use the fact that $P_n(s^2;b,a+1,c,d)=P_n(s^2;a+1,b,c,d)$, for $P_n$ is symmetric with respect to its parameters, to obtain the result.
\end{proof}
\begin{Lemma}[\cite{TAK2018}] \label{lemma2}The Askey--Wilson polynomials satisfy the contiguous relations
\begin{gather*}
P_n (x;a,b,c,d\,|\,q )=P_n (x;aq,b,c,d\,|\,q) -\frac{C_n (a,b,c,d;q )}{ 2} P_{n-1} (x;aq,b,c,d\,|\,q ),\\
P_n (x;aq,b,c,d\,|\,q )=P_n (x;aq,bq,c,d\,|\,q) -\frac{C_n (b,aq,c,d )}{ 2} P_{n-1} (x;aq,bq,c,d\,|\,q ),
\end{gather*}
where
\begin{gather*}
C_n(a,b,c,d)={\frac {a \big( 1-{q}^{n} \big) \big( 1-bc{q}^{n-1} \big)\big( 1-bd{q}^{n-1} \big) \big( 1-dc{q}^{n-1} \big) }{ \big(1-abcd{q}^{2 n-2} \big) \big( 1-abcd{q}^{2 n-1} \big) }}
\end{gather*} is the coefficient $C_n$ appearing in the three-term recurrence relation {\rm \cite[equation~(14.1.5)]{KSL}}.
\end{Lemma}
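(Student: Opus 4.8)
The plan is to mirror the argument used for the contiguous relations \eqref{e10k} and \eqref{e10l} of Lemma~\ref{lemma1}, with the Wilson weight replaced by the Askey--Wilson weight $w(x;a,b,c,d\,|\,q)$ (the orthogonality weight of $p_n(x;a,b,c,d\,|\,q)$ on $[-1,1]$, $x=\cos\theta$) and $\mathbb{D}_{x}$ replaced by $\mathcal{D}_q$. The key elementary fact is the Askey--Wilson analogue of \eqref{wilsonweight}: since $\big(ae^{{\rm i}\theta};q\big)_\infty=\big(1-ae^{{\rm i}\theta}\big)\big(aqe^{{\rm i}\theta};q\big)_\infty$, multiplying the parameter $a$ by $q$ multiplies the weight by a polynomial of degree one in $x$,
\begin{gather*}
w(x;aq,b,c,d\,|\,q)=\big(1-ae^{{\rm i}\theta}\big)\big(1-ae^{-{\rm i}\theta}\big)w(x;a,b,c,d\,|\,q)=\big(1-2ax+a^2\big)w(x;a,b,c,d\,|\,q).
\end{gather*}

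First I would expand the monic polynomial $P_n(x;a,b,c,d\,|\,q)$ in the orthogonal basis $\{P_j(x;aq,b,c,d\,|\,q)\}_{j=0}^{n}$, say $P_n(x;a,b,c,d\,|\,q)=\sum_{j=0}^{n}c_jP_j(x;aq,b,c,d\,|\,q)$, with $c_n=1$ since both sides are monic of degree $n$. For $j\le n-2$ the coefficient $c_j$ equals, up to a positive norm factor, $\int P_n(x;a,b,c,d\,|\,q)P_j(x;aq,b,c,d\,|\,q)\,w(x;aq,b,c,d\,|\,q)\,{\rm d}x$; substituting the weight identity above turns the integrand into $P_n(x;a,b,c,d\,|\,q)$ times a polynomial of degree $j+1\le n-1$ against $w(x;a,b,c,d\,|\,q)$, which integrates to zero. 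This leaves the two-term identity
\begin{gather*}
P_n(x;a,b,c,d\,|\,q)=P_n(x;aq,b,c,d\,|\,q)+c_{n-1}P_{n-1}(x;aq,b,c,d\,|\,q),
\end{gather*}
with a single undetermined scalar $c_{n-1}$.

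To pin down $c_{n-1}$ I would apply $\mathcal{D}_q$ exactly $n-1$ times to this identity and use the iterated lowering relation obtained from the shift formula $\mathcal{D}_q p_n(x;a,b,c,d\,|\,q)=\text{const}\cdot p_{n-1}\big(x;aq^{1/2},bq^{1/2},cq^{1/2},dq^{1/2}\,|\,q\big)$ (cf.\ \cite[equation~(14.1.9)]{KSL}). This reduces $\mathcal{D}_q^{\,n-1}P_n(x;a,b,c,d\,|\,q)$ and $\mathcal{D}_q^{\,n-1}P_n(x;aq,b,c,d\,|\,q)$ to explicitly known monic multiples of first--degree Askey--Wilson polynomials, and $\mathcal{D}_q^{\,n-1}P_{n-1}(x;aq,b,c,d\,|\,q)$ to a constant; comparing constant terms — equivalently, inserting the explicit first recurrence coefficient of the Askey--Wilson polynomials from \cite[equation~(14.1.5)]{KSL} — yields a closed expression for $c_{n-1}$ which, after simplification, is $-\tfrac12 C_n(a,b,c,d)$. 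This is the first relation. The second relation follows at once: permuting $a$ and $b$ in the first relation and using that $P_n(x;a,b,c,d\,|\,q)$ is symmetric in its four parameters gives $P_n(x;a,b,c,d\,|\,q)=P_n(x;a,bq,c,d\,|\,q)-\tfrac12 C_n(b,a,c,d)P_{n-1}(x;a,bq,c,d\,|\,q)$, and replacing $a$ by $aq$ gives the stated identity.

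The hard part will be this last identification: carrying the constant from the iterated operator relation, the value of $\mathcal{D}_q^{\,n-1}P_{n-1}$, and the difference of the two first--degree polynomials through to the compact form $-\tfrac12 C_n(a,b,c,d)$; the factor $\tfrac12$ and the various powers of $q$ are exactly what demand care, and one must remember that the normalizing constant in the lowering relation involves the product of the parameters, so it differs for $(a,b,c,d)$ and $(aq,b,c,d)$. An alternative that avoids the operator bookkeeping is to evaluate the two-term identity at the point $e^{{\rm i}\theta}=aq$: in the ${}_4\phi_3$ of \eqref{e58} the numerator entry $aq\cdot e^{-{\rm i}\theta}$ then equals $1$ for the parameter $aq$, so $P_n(x;aq,b,c,d\,|\,q)$ and $P_{n-1}(x;aq,b,c,d\,|\,q)$ collapse to their leading constants, while the corresponding entry equals $q^{-1}$ for the parameter $a$, so $P_n(x;a,b,c,d\,|\,q)$ terminates after two terms; the resulting scalar equation fixes $c_{n-1}$ after a short Pochhammer computation. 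Either route leaves this one algebraic simplification as the only genuine work, and it recovers the statement of \cite{TAK2018}.
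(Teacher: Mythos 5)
Your proposal is correct and follows essentially the same route as the paper: expand $P_n(x;a,b,c,d\,|\,q)$ in the basis $\{P_j(x;aq,b,c,d\,|\,q)\}$, use the weight identity $w(x;aq,b,c,d\,|\,q)=\big(1-2ax+a^2\big)w(x;a,b,c,d\,|\,q)$ with orthogonality to kill all but two terms, determine the remaining coefficient by applying $\mathcal{D}_q^{\,n-1}$ and the iterated lowering relation, and obtain the second identity by the same symmetry-and-substitution argument. (Only a cosmetic remark: for the monic normalization the lowering constants are the parameter-independent $\gamma$'s, so the bookkeeping you flag is lighter than you fear.)
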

\begin{proof}For the first relation, expand $P_n(x;a,b,c,d\,|\,q)$ in the basis $\{P_j(x;aq,b,c,d\,|\,q)\}$. Use the orthogonality relation \cite[equation~(2.3)]{Askey-1985} as well as the relation $w(x;aq,b,c,d\,|\,q)=\big(1-2ax+a^2\big)w(x;a,b,c,d\,|\,q)$ (cf.\ \cite[p.~16]{Askey-1985}) to obtain \begin{gather*}P_n(x;a,b,c,d\,|\,q)=P_n(x;aq,b,c,d\,|\,q)+t_{n-1}P_{n-1}(x;aq,b,c,d\,|\,q).\end{gather*} Apply $\mathcal{D}_q^{n-1}$ to both sides and take into account the relation \begin{gather}\label{e10o}\mathcal{D}_q^{k}P_{n-1}(x;a,b,c,d\,|\,q)=\gamma_{n}\gamma_{n-1}\cdots \gamma_{n-k-1}P_{n-k}\left(x;a+\frac{k}{ 2},b+\frac{k}{ 2},c+\frac{k}{ 2},d+\frac{k}{ 2}\,|\,q\right),\end{gather} deduced from \cite[equation~(14.1.9)]{KSL}. Solve the equation obtained for the unknown $t_{n-1}$ to get the result. For the second relation, permute $a$ and $b$ in the first one, substitute $a$ by $aq$ and use the fact that~$P_n$ is symmetric with respect to its parameters, that is $P_n(x; a,b,c,d\,|\,q)=P_n(x; b,a,c,d\,|\,q)$ (cf.\ \cite[p.~15]{Askey-1985}), to obtain the result.
\end{proof}

\subsection*{Acknowledgements}The research of MKN was supported by a Vice-Chancellor's Postdoctoral Fellowship from the University of Pretoria. The research by KJ was partially supported by the National Research Foundation of South Africa under grant number 108763. MKN thanks the African Institute for Mathematical Sciences, Muizenberg, South Africa, for their hospitality during his research visit in January 2018 where this paper was completed. We thank the referees for their careful consideration of the manuscript and helpful comments.

\pdfbookmark[1]{References}{ref}
\LastPageEnding


\begin{thebibliography}{99}
\footnotesize\itemsep=0pt

\bibitem{Al-Salam}
Al-Salam W.A., Characterization theorems for orthogonal polynomials, in
 Orthogonal Polynomials ({C}olumbus, {OH}, 1989), \href{https://doi.org/10.1007/978-94-009-0501-6_1}{\textit{NATO Adv. Sci. Inst.
 Ser.~C Math. Phys. Sci.}}, Vol.~294, Kluwer Acad. Publ., Dordrecht, 1990,
 1--24.

\bibitem{Al-Chihara1972}
Al-Salam W.A., Chihara T.S., Another characterization of the classical
 orthogonal polynomials, \href{https://doi.org/10.1137/0503007}{\textit{SIAM~J. Math. Anal.}} \textbf{3} (1972),
 65--70.

\bibitem{AN2006}
\'{A}lvarez Nodarse R., On characterizations of classical polynomials,
 \href{https://doi.org/10.1016/j.cam.2005.06.046}{\textit{J.~Comput. Appl. Math.}} \textbf{196} (2006), 320--337.

\bibitem{Andrew-Askey}
Andrews G.E., Askey R., Classical orthogonal polynomials, in Orthogonal
 Polynomials and Applications ({B}ar-le-{D}uc, 1984), \href{https://doi.org/10.1007/BFb0076530}{\textit{Lecture Notes in
 Math.}}, Vol.~1171, Springer, Berlin, 1985, 36--62.

\bibitem{Askey-1985}
Askey R., Wilson J., Some basic hypergeometric orthogonal polynomials that
 generalize {J}acobi polynomials, \href{https://doi.org/10.1090/memo/0319}{\textit{Mem. Amer. Math. Soc.}} \textbf{54}
 (1985), iv+55~pages.

\bibitem{ARS1995}
Atakishiyev N.M., Rahman M., Suslov S.K., On classical orthogonal polynomials,
 \href{https://doi.org/10.1007/BF01203415}{\textit{Constr. Approx.}} \textbf{11} (1995), 181--226.

\bibitem{bochner1929}
Bochner S., \"{U}ber {S}turm--{L}iouvillesche {P}olynomsysteme,
 \href{https://doi.org/10.1007/BF01180560}{\textit{Math.~Z.}} \textbf{29} (1929), 730--736.

\bibitem{CSM}
Costas-Santos R.S., Marcell\'{a}n F., {$q$}-classical orthogonal polynomials: a
 general difference calculus approach, \href{https://doi.org/10.1007/s10440-009-9536-z}{\textit{Acta Appl. Math.}} \textbf{111}
 (2010), 107--128, \href{https://arxiv.org/abs/math.CA/0612097}{math.CA/0612097}.

\bibitem{datta}
Datta S., Griffin J., A characterization of some {$q$}-orthogonal polynomials,
 \href{https://doi.org/10.1007/s11139-006-0152-5}{\textit{Ramanujan~J.}} \textbf{12} (2006), 425--437.

\bibitem{foupouagnigni2008}
Foupouagnigni M., On difference equations for orthogonal polynomials on
 nonuniform lattices, \href{https://doi.org/10.1080/10236190701536199}{\textit{J.~Difference Equ. Appl.}} \textbf{14} (2008),
 127--174.

\bibitem{GaMar1995}
Garc\'{i}a A.G., Marcell\'{a}n F., Salto L., A distributional study of discrete
 classical orthogonal polynomials, \href{https://doi.org/10.1016/0377-0427(93)E0241-D}{\textit{J.~Comput. Appl. Math.}} \textbf{57}
 (1995), 147--162.

\bibitem{Grunbaum}
Gr\"{u}nbaum F.A., Haine L., The {$q$}-version of a theorem of {B}ochner,
 \href{https://doi.org/10.1016/0377-0427(95)00262-6}{\textit{J.~Comput. Appl. Math.}} \textbf{68} (1996), 103--114.

\bibitem{Mourad2003}
Ismail M.E.H., A generalization of a theorem of {B}ochner, \href{https://doi.org/10.1016/S0377-0427(03)00536-3}{\textit{J.~Comput.
 Appl. Math.}} \textbf{159} (2003), 319--324.

\bibitem{Mourad-2005}
Ismail M.E.H., Classical and quantum orthogonal polynomials in one variable,
 \href{https://doi.org/10.1017/CBO9781107325982}{\textit{Encyclopedia of Mathematics and its Applications}}, Vol.~98, Cambridge
 University Press, Cambridge, 2005.

\bibitem{Alta}
Jooste A., Zeros of {J}acobi, {M}eixner and {K}rawtchouk polynomials, Ph.D.~Thesis, University of Pretoria, 2013, available at
 \url{https://repository.up.ac.za/handle/2263/30787}.

\bibitem{kfk2017}
Kenfack~Nangho M., Foupouagnigni M., Koepf W., On exponential and trigonometric
 functions on nonuniform lattices, \textit{Ramanujan~J.}, {t}o appear.

\bibitem{MK2017}
Kenfack~Nangho M., Jordaan K., A characterization of {A}skey--{W}ilson
 polynomials, \href{https://doi.org/10.1090/proc/14317}{\textit{Proc. Amer. Math. Soc.}}, {t}o appear,
 \href{https://arxiv.org/abs/1711.03349}{arXiv:1711.03349}.

\bibitem{KSL}
Koekoek R., Lesky P.A., Swarttouw R.F., Hypergeometric orthogonal polynomials
 and their {$q$}-analogues, \href{https://doi.org/10.1007/978-3-642-05014-5}{\textit{Springer Monographs in Mathematics}},
 Springer-Verlag, Berlin, 2010.

\bibitem{Koepf1998}
Koepf W., Schmersau D., Representations of orthogonal polynomials,
 \href{https://doi.org/10.1016/S0377-0427(98)00023-5}{\textit{J.~Comput. Appl. Math.}} \textbf{90} (1998), 57--94,
 \href{https://arxiv.org/abs/math.CA/9703217}{math.CA/9703217}.

\bibitem{Koepf2001}
Koepf W., Schmersau D., On a structure formula for classical {$q$}-orthogonal
 polynomials, \href{https://doi.org/10.1016/S0377-0427(00)00577-X}{\textit{J.~Comput. Appl. Math.}} \textbf{136} (2001), 99--107.

\bibitem{Koorn}
Koornwinder T.H., The structure relation for {A}skey--{W}ilson polynomials,
 \href{https://doi.org/10.1016/j.cam.2006.10.015}{\textit{J.~Comput. Appl. Math.}} \textbf{207} (2007), 214--226,
 \href{https://arxiv.org/abs/math.CA/0601303}{math.CA/0601303}.

\bibitem{Magnus1988}
Magnus A.P., Associated {A}skey--{W}ilson polynomials as {L}aguerre--{H}ahn
 orthogonal polynomials, in Orthogonal Polynomials and their Applications
 ({S}egovia, 1986), \href{https://doi.org/10.1007/BFb0083366}{\textit{Lecture Notes in Math.}}, Vol.~1329, Springer,
 Berlin, 1988, 261--278.

\bibitem{Marcellan1994}
Marcell\'{a}n F., Branquinho A., Petronilho J., Classical orthogonal
 polynomials: a functional approach, \href{https://doi.org/10.1007/BF00998681}{\textit{Acta Appl. Math.}} \textbf{34}
 (1994), 283--303.

\bibitem{Maroni93}
Maroni P., Variations around classical orthogonal polynomials. {C}onnected
 problems, \href{https://doi.org/10.1016/0377-0427(93)90319-7}{\textit{J.~Comput. Appl. Math.}} \textbf{48} (1993), 133--155.

\bibitem{Maroni99}
Maroni P., Semi-classical character and finite-type relations between
 polynomial sequences, \href{https://doi.org/10.1016/S0168-9274(98)00137-8}{\textit{Appl. Numer. Math.}} \textbf{31} (1999),
 295--330.

\bibitem{niki1991}
Nikiforov A.F., Suslov S.K., Uvarov V.B., Classical orthogonal polynomials of a
 discrete variable, \href{https://doi.org/10.1007/978-3-642-74748-9}{\textit{Springer Series in Computational Physics}}, Springer-Verlag,
 Berlin, 1991.

\bibitem{TAK2018}
Tcheutia D.D., Jooste A.S., Koepf W., Mixed recurrence equations and
 interlacing properties for zeros of sequences of classical {$q$}-orthogonal
 polynomials, \href{https://doi.org/10.1016/j.apnum.2017.11.003}{\textit{Appl. Numer. Math.}} \textbf{125} (2018), 86--102.

\bibitem{VinetZhedanov}
Vinet L., Zhedanov A., Generalized {B}ochner theorem: characterization of the
 {A}skey--{W}ilson polynomials, \href{https://doi.org/10.1016/j.cam.2006.11.004}{\textit{J.~Comput. Appl. Math.}} \textbf{211}
 (2008), 45--56, \href{https://arxiv.org/abs/0712.0069}{arXiv:0712.0069}.

\end{thebibliography}
\end{document}